\documentclass[reqno,11pt]{amsart}

\usepackage[left=3cm, right=3cm, top=3cm, bottom=3cm]{geometry}
\usepackage{amssymb}
\usepackage{amsmath}
\usepackage{color}

\theoremstyle{plain}
\newtheorem{theorem}{Theorem}[section]
\newtheorem*{theorem*}{Theorem}
\newtheorem{proposition}{Proposition}[section]
\newtheorem{lemma}{Lemma}[section]
\newtheorem{corollary}{Corollary}[section]
\newtheorem{remark}{Remark}[section]

\theoremstyle{definition}
\newtheorem{definition}{Definition}[section]

\theoremstyle{remark}
\newtheorem{example}{Example}[section]

\begin{document}

\title[Analogs of generalized resolvents of relations]
{Analogs of generalized resolvents of relations
generated by pair of differential operator expressions one of
which depends on spectral parameter in nonlinear manner}

\author{Volodymyr Khrabustovskyi}

\address{Ukrainian State Academy of Railway Transport,  Kharkiv, Ukraine}

\email{v{\_}khrabustovskyi@ukr.net}

\subjclass[2000]{Primary 34B05, 34B07, 34L10}

\keywords{Relation generated by pair of differential expressions
one of which depends on spectral parameter in nonlinear manner,
non-injective resolvent, generalized resolvent}

\begin{center}
{\Large \textbf{Analogs of generalized resolvents of relations
generated by pair of differential operator expressions one of
which depends on spectral parameter in nonlinear
manner}}\\\medskip {\large Volodymyr Khrabustovskyi}\\\medskip
{\small Ukrainian
State Academy of Railway Transport,  Kharkiv, Ukraine}\\
{\small v{\_}khrabustovskyi@ukr.net}
\end{center}\vspace{13px}

\begin{flushright}
\noindent \textit{Dedicated to Academician Vladimir Aleksandrovich
Marchenko\\ on the occasion of his jubilee}\bigskip
\end{flushright}

{\small \noindent \textbf{Abstract.} For the relations generated
by pair of differential operator expressions one of which depends
on the spectral parameter in the Nevanlinna manner we construct
analogs of the generalized resolvents which are
integro-differential operators.\bigskip

\noindent \textbf{Keywords and phrases:} Relation generated by
pair of differential expressions one of which depends on spectral
parameter in nonlinear manner, non-injective resolvent,
generalized resolvent.\medskip

\noindent \textbf{2000 MSC:} 34B05, 34B07, 34L10}\bigskip

\section*{Introduction}
We consider either on finite or infinite interval operator
differential equation of arbitrary order
\begin{gather}
\label{GEQ__1_} l_\lambda[y]=m[f],\ t\in\bar{\mathcal{I}},\
\mathcal{I}=(a,b)\subseteq\mathbb{R}^1
\end{gather}
in the space of vector-functions with values in the separable
Hilbert space $\mathcal{H}$, where
\begin{gather}
\label{GEQ__2_} l_\lambda[y]=l[y]-\lambda m[y]-n_\lambda[y],
\end{gather}
$l[y],m[y]$ are symmetric operator differential expression. The
order of $l_\lambda[y]$ is equal to $r>0$. For the expression
$m[y]$ the subintegral quadratic form $m\{y,y\}$ of the Dirichlet
integral $m[y,y]=\int_{\mathcal{I}}m\{y,y\}dt$ is nonnegative for
$t\in\bar{\mathcal{I}}$. The leading coefficient of the expression
$m[y]$ may lack the inverse from $B(\mathcal{H})$ for any
$t\in\bar{\mathcal{I}}$ and even it may vanish on some intervals.
For the operator differential expression $n_\lambda[y]$ the form
$n_\lambda\{y,y\}$ depends on $\lambda$ in the Nevanlinna manner
for $t\in\bar{\mathcal{I}}$. Therefore the order $s\geq 0$ of
$m[y]$ is even and $\leq r$.

In the Hilbert space $L^2_m(\mathcal{I})$ with metrics generated
by the form $m[y,y]$ for equation
(\ref{GEQ__1_})-(\ref{GEQ__2_}) we construct analogs
$R(\lambda)$ of the generalized resolvents which in general are
non-injective and which possess the following representation:
\begin{gather}\label{GEQ__3_}
R(\lambda)=\int_{\mathbb{R}^1}{dE_\mu\over \mu-\lambda}
\end{gather}
where $E_\mu$ is a generalized spectral family for which
$E_\infty$ is less or equal to the identity operator. (Abstract
operators which possess such representation were studied in
\cite{DSnoo1}.)

This construction is based on a special reduction of the equation
\begin{gather}\label{GEQ__4_}
l[y]=m[f]
\end{gather}
to the first order system with weight. Here $l$ and $m$ are
operator differential expressions which are not necessary
symmetric (in contrast to (\ref{GEQ__2_})). For construction
of $R(\lambda)$ we also introduce the characteristic operator of
the equation
\begin{gather}\label{GEQ__5_}
l_\lambda[y]=-{(\Im l_\lambda)[f]\over \Im\lambda},\
t\in\bar{\mathcal{I}},
\end{gather}
where $(\Im l_\lambda)[f]={1\over 2i}(l[f]-l^*[f])$.

In the case $r=1$, $n_\lambda[y]=H_\lambda(t)y$ (here the
mentioned reduction is not needed) the resolvents $R(\lambda)$ was
constructed in \cite{Khrab4}.

Further in the work we consider the boundary value problem
obtained by adding to equation
(\ref{GEQ__1_})-(\ref{GEQ__2_}) the dissipative boundary
conditions depending on a spectral parameter. We prove that for
some boundary conditions solutions of such problems are generated
by the operators $R(\lambda)$ if, in contrast to the case $s=0$,
$n_\lambda[y]=H_\lambda(t)y$, the boundary conditions contain the
derivatives of vector-function $f(t)$ that are taken on the ends
of the interval.

In the case $n_\lambda[y]\equiv 0$ the results listed above are
known \cite{Khrab6}, and $R(\lambda)$ is the generalized resolvent
of the minimal relation generated by the pair of expressions
$l[y]$ and $m[y]$. For this case we show in the work that in the
regular case all generalized resolvents are exhausted by the
operators $R(\lambda)$, and thereby by virtue of \cite{Khrab5}
their full description with the help of boundary conditions is
given. A review of other results for the case $n_\lambda[y]\equiv
0$ is in the work \cite{Khrab+}.

In the works \cite{Bruk4}, \cite{Bruk5} the question of the
conditions for holomorphy and continuous reversibility of the
restrictions of maximal relations generated by $l_\lambda[y]$
(\ref{GEQ__2_}) with $m[y]\equiv 0$,
$n_\lambda[y]=H_\lambda(t)y$ in $L^2_{\Im H_{\lambda_0}(t)/\Im
\lambda_0}$ ($\Im \lambda_0\not= 0$) and also by the integral
equation with the Nevanlinna matrix measure was studied (using
some of the results from \cite{Khrab5}). We remark that the
relations inverse to those ones considered in \cite{Bruk4},
\cite{Bruk5} do not possess the representation
(\ref{GEQ__3_}). Also we note that the resolvent equation
(\ref{GEQ__1_})-(\ref{GEQ__2_}) is not reduced to the
equations considered in \cite{Bruk4}, \cite{Bruk5}.

Many questions, that concern differential operators and relations
in the space of vector-functions, are considered in the monographs
\cite{Atkinson,Berez1,Berez2,GorGor,LyaSto,Marchenko,RBKholkin,Sakhno}
containing an extensive literature. The method of studying of
these operators and relations based on use of the abstract Weyl
function and its generalization (Weyl family) was proposed in
\cite{DerMalamud,DHMdS1,DHMdS2}.

A preliminary version of results of this paper is contained in
preprint \cite{KhrabArxiv}. The expansion formulae in the
solutions of the homogeneous equation \eqref{GEQ__1_} will be
obtained in our next paper.

We denote by $(\ .\ )$ and $\|\cdot\|$ the scalar product and the
norm in various spaces with special indices if it is necessary. For differential expression $L$ we denote 
$\Re l={1\over 2}(l+l^*)$, $\Im l={1\over 2i}(l-l^*)$.

Let an interval $\Delta \subseteq \mathbb{R}^1 ,\,
f\left(t\right)\, \left(t\in \Delta \right)$ be a function with
values in some Banach space $B$. The notation $f\left(t\right)\in
C^{k} \left(\Delta,B\right),\ k=0,\, 1,\, ...$ (we omit the index
$k$ if $k=0$) means, that in any point of $\Delta$
$f\left(t\right)$ has continuous in the norm  $\left\| \, \cdot \,
\right\| _{B}$ derivatives of order up to and including $l$ that
are taken in the norm $\left\| \, \cdot \,\right\| _{B} $; if
$\Delta$ is either semi-open or closed interval then on its ends
belonging to $\Delta$ the one-side continuous derivatives exist.
The notation $f\left(t\right)\in C_{0}^{k} \left(\Delta,B\right)$
means that $f\left(t\right)\in C^{k} \left(\Delta,B \right)$ and
$f\left(t\right)=0$ in the neighbourhoods of the ends of $\Delta$.

\section{The reduction of equation (\ref{GEQ__4_}) to the
first order system of canonical type with weight. The Green
formula} We consider in the separable Hilbert space $\mathcal{H}$
equation \eqref{GEQ__4_}, where $l\left[y\right]$ and
$m\left[f\right]$ are differential expressions (that are not
necessary symmetric) with sufficiently smooth coefficients from
$B\left(\mathcal{H}\right)$ and of orders $r>0$ and $s$
correspondingly. Here $r\ge s\ge 0$, $s$ is even and these
expressions are presented in the divergent form. Namely:
\begin{equation} \label{GEQ__51+_}
l\left[y\right]=\sum\limits _{k=0}^{r}i^{k} l_{k} \left[y\right] ,
\end{equation}
where $l_{2j} =D^{j} p_{j} \left(t\right)D^{j} $, $l_{2j-1}
=\frac{1}{2} D^{j-1} \left\{Dq_{j} \left(t\right)+s_{j}
\left(t\right)D\right\}D^{j-1} $, $p_{j} \left(t\right)$, $q_{j}
\left(t\right),\, s_{j} \left(t\right)\in C^{j}
\left(\bar{\mathcal{I}},B\left(\mathcal{H}\right)\right)$, $D={d
\mathord{\left/{\vphantom{d dt}}\right.\kern-\nulldelimiterspace}
dt} $; $m\left[f\right]$ is defined in a similar way with $s$
instead of $r$ and $\tilde{p}_{j} \left(t\right),\, \,
\tilde{q}_{j} \left(t\right),\, \, \tilde{s}_{j} \left(t\right)\in
B\left(\mathcal{H}\right)$ instead of $p_{j} \left(t\right),\, \,
q_{j} \left(t\right),\, \, s_{j} \left(t\right)$.

In the case of even $r=2n\ge s$, $p_{n}^{-1} \in B\left(\mathcal{
H}\right)$ we denote
\begin{gather}\label{GEQ__6_}
Q\left(t,l\right)=\left(\begin{array}{cc} {0} & {iI_{n} } \\
{-iI_{n} } & {0} \end{array}\right)=\frac{J}{i} ,\, \, \,
S\left(t,l\right)=Q\left(t,l\right), \\ \label{GEQ__7_}
H\left(t,\, l\right)=\left\| h_{\alpha \beta } \right\| _{\alpha
,\, \beta =1}^{2} ,\, \, h_{\alpha \beta } \in
B\left(\mathcal{H}^n \right),
\end{gather}
where $I_{n} $ is the identity operator in $B\left(\mathcal{H}^{n}
\right);\, \, h_{11} $ is a three diagonal operator matrix whose
elements under the main diagonal are equal to $\left(\frac{i}{2}
q_{1} ,\, \ldots ,\, \frac{i}{2} q_{n-1} \right)$, the elements
over the main diagonal are equal to $\left(-\frac{i}{2} s_{1} ,\,
\, \ldots ,\, \, -\frac{i}{2} s_{n-1} \right)$, the elements on
the main diagonal are equal to $\left(-p_{0} ,\, \, \ldots ,\, \,
-p_{n-2} ,\, \, \frac{1}{4} s_{n} p_{n}^{-1} q_{n} -p_{n-1}
\right)$; $h_{12} $ is an operator matrix with the identity
operators $I_{1} $ under the main diagonal, the elements on the
main diagonal are equal to $\left(0,\, \, \ldots ,\, \, 0,\, \,
-\frac{i}{2} s_{n} p_{n}^{-1} \right)$, the rest elements are
equal to zero; $h_{21} $ is an operator matrix with identity
operators $I_{1} $ over the main diagonal, the elements on the
main diagonal are equal to $\left(0,\, \, \ldots ,\, \, 0,\, \,
\frac{i}{2} p_{n}^{-1} q_{n} \right)$, the rest elements are equal
to zero; $h_{22} =\mathrm{diag}\left(0,\, \, \ldots ,\, \, 0,\, \,
p_{n}^{-1} \right)$.

Also in this case we denote
\footnote{\label{foot1}$W\left(t,l,m\right)$ is given for the case
$s=2n$ . If  $s<2n$ one have set the corresponding elements of
operator matrices $m_{\alpha \beta } $ be equal to zero. In
particular if  $s<2n$ then  $m_{12} =m_{21} =m_{22} =0$  and
therefore $W\left(t,l,m\right)=\mathrm{diag}\left(m_{11}
,0\right)$ in view of \eqref{GEQ__13_}.}
\begin{equation} \label{GEQ__8_}
W\left(t,\, l,\, m\right)=C^{*-1} \left(t,l\right)\left\{\left\|
m_{\alpha \beta } \right\| _{\alpha ,\, \beta =1}^{2}
\right\}C^{-1} \left(t,l\right), m_{\alpha \beta } \in
B\left(\mathcal{H}^{n} \right),
\end{equation}
where $m_{11} $ is a tree diagonal operator matrix whose elements
under the main diagonal are equal to $\left(-\frac{i}{2}
\tilde{q}_{1} ,\, \ldots ,\, -\frac{i}{2} \tilde{q}_{n-1}
\right)$, the elements over the main diagonal are equal to
$\left(\frac{i}{2} \tilde{s}_{1} ,\, \ldots ,\, \frac{i}{2}
\tilde{s}_{n-1} \right)$, the elements on the main diagonal are
equal to $\left(\tilde{p}_{0} ,\, \ldots ,\, \tilde{p}_{n-1}
\right)$; $m_{12} =\mathrm{diag}\left(0,\, \ldots ,\, 0,\,
\frac{i}{2} \tilde{s}_{n} \right)$, $m_{21}
=\mathrm{diag}\left(0,\, \ldots ,\, 0,\, -\frac{i}{2}
\tilde{q}_{n} \right)$, $m_{22} =\mathrm{diag}\left(0,\, \ldots
,\, 0,\, \tilde{p}_{n} \right)$.

Operator matrix $C\left(t,l\right)$ is defined by the condition
\begin{multline}
\label{GEQ__9_} C\left(t,l\right)col\left\{f\left(t\right),\,
f'\left(t\right),\, \ldots ,\, f^{\left(n-1\right)}
\left(t\right),\, f^{\left(2n-1\right)} \left(t\right),\, \ldots
,\, f^{\left(n\right)} \left(t\right)\right\}=\\=
 col\, \left\{f^{\left[0\right]}
\left(t|l\right),\, f^{\left[1\right]} \left(t|l\right),\, \ldots
,\, f^{\left[n-1\right]} \left(t|l\right),\, f^{\left[2n-1\right]}
\left(t\left|l\right. \right),\, \ldots ,\, f^{\left[n\right]}
\left(t\left|l\right. \right)\right\},
\end{multline}
where $f^{\left[k\right]} \left(t\left|L\right. \right)$ are
quasi-derivatives of vector-function $f\left(t\right)$ that
correspond to differential expression $L$.

The quasi-derivatives corresponding to $l$ are equal (cf.
\cite{RBUpsala}) to
\begin{gather} \label{GEQ__10_}
y^{\left[j\right]} \left(t\left|l\right. \right)=y^{\left({
j}\right)} \left(t\right),\, \, \, { j}=0,\, \, ...,\, \,
\left[\frac{r}{2} \right]-1, \\ \label{GEQ__11_}
y^{\left[n\right]} \left(t\left|l\right.
\right)=\left\{\begin{array}{l} {p_{n} y^{\left(n\right)}
-\frac{i}{2} q_{n} y^{\left(n-1\right)} ,\, \, r=2n} \\
{-\frac{i}{2} q_{n+1} y^{\left(n\right)} ,\, \, r=2n+1}
\end{array}\right.,
\\\label{GEQ__12_}
y^{\left[r-j\right]} \left(t\left|l\right.
\right)=-Dy^{\left[r-j-1\right]} \left(t\left|l\right.
\right)+p_{j} y^{\left(j\right)} +\frac{i}{2} \left[s_{j+1}
y^{\left(j+1\right)} -q_{j} y^{\left(j-1\right)} \right],\, j=0,\,
...,\, \left[\frac{r-1}{2} \right],\, q_{0} \equiv 0.
\end{gather}
At that $l\left[y\right]=y^{\left[r\right]} \left(t\left|l\right.
\right)$. The quasi-derivatires $y^{\left[k\right]}
\left(t\left|m\right. \right)$ corresponding to $m$ are defined in
the same way with even $s$ instead of $r$ and $\tilde{p}_{j},
\tilde{q}_{j} ,\tilde{s}_{j} $ instead of $p_{j} ,q_{j} ,s_{j} $.

It is easy to see that
\begin{equation} \label{GEQ__13_}
C\left(t,l\right)=\left(\begin{array}{cc} {I_{n} } & {0} \\
{C_{21} } & {C_{22} } \end{array}\right),\, \, \, C_{\alpha\beta }
\in B\left(\mathcal{H}^{n} \right),
\end{equation}
$C_{21} ,\, C_{22} $ are upper triangular operator matrices with
diagonal elements $\left(-\frac{i}{2} q_{1} ,\, \ldots ,\,
-\frac{i}{2} q_{n} \right)$ and $\left(\left(-1\right)^{n-1} p_{n}
,\, \left(-1\right)^{n-2} p_{n} ,\, \ldots ,\, p_{n} \right)$
correspondingly.

In the case of odd $r=2n+1>s$ we denote
\begin{gather} \label{GEQ__14_}
Q\left(t,l\right)=\begin{cases}J/i\oplus q_{n+1}\\
q_{1}\end{cases},\quad S\left(t,l\right)=\begin{cases} J/i\oplus
s_{n+1},& n>0 \\ s_{1},& n=0\end{cases},
\\
\label{GEQ__15_} H\left(t,\, l\right)=\begin{cases}\left\|
h_{\alpha \,\beta } \right\|_{\alpha ,\, \beta =1}^{2},& n>0 \\
p_{0},& n=0
\end{cases},
\end{gather}
where $B\left(\mathcal{H}^{n} \right)\ni h_{11} $ is a
three-diagonal operator matrix whose elements under the main
diagonal are equal to $\left(\frac{i}{2} q_{1} ,\, \ldots ,\,
\frac{i}{2} q_{n-1} \right)$, the elements over the main diagonal
are equal to $\left(-\frac{i}{2} s_{1} ,\, \ldots ,\, -\frac{i}{2}
s_{n-1} \right)$, the elements on the main diagonal are equal to
$\left(-p_{0} ,\, \ldots ,\, -p_{n-1} \right)$, the rest elements
are equal to zero. $B\, \left(\mathcal{ H}^{n+1} ,\, \mathcal{
H}^{n} \right)\ni h_{12} $ is an operator matrix whose elements
with numbers $j,\, j-1$ are equal to $I_{1} ,\, j=2,\, \ldots ,\,
n$, the element with number $n,\, n+1$ is equal to $\frac{1}{2}
s_{n} $, the rest elements are equal to zero. $B\,
\left(\mathcal{H}^{n} ,\, \mathcal{H}^{n+1} \right)\ni h_{21} $ is
an operator matrix whose elements with numbers $j-1,\, j$ are
equal to $I_{1} ,\, j=2,\, \ldots ,\, n$, the element with number
$n+1,\, n$ is equal to $\frac{1}{2} q_{n} $, the rest elements are
equal to zero. $B\, \left(\mathcal{H}^{n+1} \right)\ni h_{22} $ is
an operator matrix whose last row is equal to $\left(0,\, \ldots
,\, 0,\, -iI_{1} ,\, -p_{n} \right)$, last column is equal to
$col\, \left(0,\, \ldots ,\, 0,\, iI_{1} ,\, -p_{n} \right)$, the
rest elements are equal to zero.

Also in this case we denote \footnote{See the previous footnote}
\begin{equation} \label{GEQ__16_}
W\left(t,\, l,\, m\right)=\left\| m_{\alpha \beta } \right\| _{\alpha ,\, \beta =1}^{2} ,
\end{equation}
where $m_{11} $ is defined in the same way as $m_{11} $
\eqref{GEQ__8_}. $B\left(\mathcal{H}^{n+1} ,\, \mathcal{H}^{n}
\right)\ni m_{12} $ is an operator matrix whose element with
number $n,\, n+1$ is equal to $-\frac{1}{2} \tilde{s}_{n} $, the
rest elements are equal to zero. $B\, \left(\mathcal{H}^{n} ,\,
\mathcal{H}^{n+1} \right)\ni m_{21} $ is an operator matrix whose
element with number $n+1,\, n$ is equal to $-\frac{1}{2}
\tilde{q}_{n} $, the rest elements are equal to zero. $B\,
\left(\mathcal{H}^{n+1} \right)\ni m_{22} =\mathrm{diag}\left(0,\,
\ldots ,\, 0,\, \tilde{p}_{n} \right)$.

Obviously for $H\left(t,l\right)$ \eqref{GEQ__7_},
\eqref{GEQ__15_} and $W\left(t,l,m\right)$
\eqref{GEQ__8_}, \eqref{GEQ__16_} one has
\begin{equation} \label{GEQ__17_}
H^{*} \left(t,l\right)=H\left(t,l^{*} \right), W^{*} \left(t,l,\, m\right)=W\left(t,l,\, m^{*} \right).
\end{equation}

\begin{lemma}\label{lm1}
Let the order of $\Im l$ is even. Then
\begin{equation} \label{GEQ__18_}
\Im H\left(t,l\right)=W\left(t,l,\, -\Im l\right)=W\left(t,l^{*}
,\, -\Im l\right).
\end{equation}
\end{lemma}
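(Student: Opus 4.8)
The plan is to prove first the central equality $\Im H(t,l)=W(t,l,-\Im l)$ by writing out both sides entrywise, and then to obtain $W(t,l,-\Im l)=W(t,l^*,-\Im l)$ as a formal consequence. The starting point is the first relation of \eqref{GEQ__17_}: since $H^*(t,l)=H(t,l^*)$,
\begin{equation*}
\Im H(t,l)=\tfrac{1}{2i}\bigl(H(t,l)-H^*(t,l)\bigr)=\tfrac{1}{2i}\bigl(H(t,l)-H(t,l^*)\bigr),
\end{equation*}
so the whole problem is reduced to comparing $H(t,l)$ with $H(t,l^*)$. For this I need the coefficients of the formal adjoint $l^*$ rewritten in the divergence form \eqref{GEQ__51+_}; integrating each summand $i^k l_k$ by parts (equivalently, reading \eqref{GEQ__17_} through the definition of $H$) shows that $l^*$ is of the same type with $p_j,q_j,s_j$ replaced by $p_j^*,s_j^*,q_j^*$ respectively. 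Hence $m:=-\Im l=\tfrac{1}{2i}(l^*-l)$ is again of the form \eqref{GEQ__51+_}, with coefficients $\tilde p_j=-\Im p_j$, $\tilde q_j=\tfrac{1}{2i}(s_j^*-q_j)$, $\tilde s_j=\tfrac{1}{2i}(q_j^*-s_j)$; by hypothesis its order is even, so $W(t,l,m)$ is well defined.

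For the even case $r=2n$ I would insert these coefficients into $\|m_{\alpha\beta}\|$ of \eqref{GEQ__8_} and match against $\tfrac{1}{2i}(H(t,l)-H(t,l^*))$. The non-leading entries agree already before the conjugation: for example the subdiagonal of $m_{11}$ is $-\tfrac{i}{2}\tilde q_j=\tfrac14(q_j-s_j^*)$, which is exactly $\tfrac{1}{2i}$ times the difference $\tfrac{i}{2}q_j-\tfrac{i}{2}s_j^*$ of the subdiagonals of $h_{11}$ computed for $l$ and for $l^*$, and likewise for the superdiagonal and the first $n-1$ diagonal entries. The real work is in the leading block, because $\|m_{\alpha\beta}\|$ contains no inverse power $p_n^{-1}$ while $\Im H(t,l)$ does; those inverses must be manufactured by the conjugation $C^{*-1}(t,l)(\,\cdot\,)C^{-1}(t,l)$.

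Here the key structural observation is that $\|m_{\alpha\beta}\|$ is supported only on the tridiagonal block $m_{11}$ and the three corner positions $(n,2n),(2n,n),(2n,2n)$, while $C^{-1}(t,l)$ is block lower triangular by \eqref{GEQ__13_}, its lower-right corner carrying $p_n^{-1}$. Consequently each entry of $W(t,l,m)$ receives only finitely many contributions, localized at the indices $n$ and $2n$; computing them---for instance the $(2n,2n)$ entry of $W(t,l,m)$ equals $(p_n^*)^{-1}\tfrac{1}{2i}(p_n^*-p_n)p_n^{-1}=\tfrac{1}{2i}(p_n^{-1}-(p_n^*)^{-1})$, matching the last diagonal of the $(2,2)$ block of $\tfrac{1}{2i}(H(t,l)-H(t,l^*))$, while the contributions of $\tilde p_{n-1},\tilde s_n,\tilde q_n,\tilde p_n$ and of the corner entries of $C_{21},C_{22}$ combine (after the spurious terms $q_n^*(p_n^*)^{-1}q_n$ cancel) into the last diagonal entry $\tfrac{1}{2i}(\tfrac14 s_n p_n^{-1}q_n-\tfrac14 q_n^*(p_n^*)^{-1}s_n^*+p_{n-1}^*-p_{n-1})$ of the $(1,1)$ block---is the main obstacle of the proof. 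In the odd case $r=2n+1$ the situation is lighter: $W(t,l,m)=\|m_{\alpha\beta}\|$ by \eqref{GEQ__16_} involves no conjugation and no inverse coefficients, so the comparison with $H(t,l)$ from \eqref{GEQ__14_}--\eqref{GEQ__15_} reduces to a purely linear entrywise check, the hypothesis that $\Im l$ has even order being precisely what makes the top order $2n+1$ part of $\Im l$ vanish so that $m$ fits the scheme.

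It remains to derive the second equality. Applying the first equality to $l^*$ (whose imaginary part $\Im l^*=-\Im l$ is again of even order) gives $\Im H(t,l^*)=W(t,l^*,-\Im l^*)=W(t,l^*,\Im l)$, while \eqref{GEQ__17_} together with $l^{**}=l$ yields $\Im H(t,l^*)=\tfrac{1}{2i}(H(t,l^*)-H(t,l))=-\Im H(t,l)$. Since $m\mapsto W(t,l^*,m)$ is linear (the matrix $\|m_{\alpha\beta}\|$ depends linearly on the coefficients of $m$, and the conjugation is linear), we conclude
\begin{equation*}
W(t,l^*,-\Im l)=-W(t,l^*,\Im l)=\Im H(t,l)=W(t,l,-\Im l),
\end{equation*}
which is the remaining identity.
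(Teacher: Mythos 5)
Your proof is correct, and it follows the paper's strategy in all but one organizational respect: the reduction via \eqref{GEQ__17_} and the divergence form of $-\Im l$, the purely entrywise check in the odd case, and the derivation of the second equality (the first equality applied to $l^{*}$ together with $\Im H(t,l^{*})=-\Im H(t,l)$ and linearity of $W$ in its third argument) are exactly what the paper does --- its intermediate formula \eqref{GEQ__231_} combined with \eqref{GEQ__17_} is the same argument in different notation. The genuine difference is in how the even case $r=2n$ is computed. The paper never inverts $C(t,l)$: it splits $H=A+B$, where $B$ \eqref{GEQ__20_}--\eqref{GEQ__22_} collects all entries containing $p_{n}^{-1}$, observes that $B(t,l)C(t,l)$ has only two nonzero entries, so that $C^{*}BC$ is explicitly computable, while $\Im A$ is supported in the $(1,1)$ block, where conjugation by the block-triangular $C$ acts trivially; this yields $C^{*}(\Im H)C = C^{*}W(t,l,-\Im l)C$ and the equality follows since $C$ is invertible. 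You instead conjugate the coefficient matrix of $-\Im l$ by $C^{-1}$ and match the entries of $W(t,l,-\Im l)$ against $\Im H$ directly, using the block-triangularity of $C^{-1}$ \eqref{GEQ__13_} and the localization of all corrections at the indices $n$ and $2n$; your sample computations --- the $(2n,2n)$ entry $\frac{1}{2i}\bigl(p_{n}^{-1}-(p_{n}^{*})^{-1}\bigr)$ and the cancellation of the spurious $q_{n}^{*}(p_{n}^{*})^{-1}q_{n}$ terms in the $(n,n)$ entry --- are correct and are precisely the transposed version of the paper's computation of $\Im(C^{*}BC)$. The two routes verify the same finite list of entries and are equivalent; the paper's direction is marginally more economical, since multiplying by $C$ rather than $C^{-1}$ exhibits the correction term as a visibly sparse matrix without any triangular inversion.
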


\begin{proof}
Let us prove the first equality in \eqref{GEQ__18_} for even
$r=2n$. Let us represent $H\left(t,l\right)$ \eqref{GEQ__7_}
in the form
\begin{equation} \label{GEQ__19_}
H\left(t,l\right)=A\left(t,l\right)+B\left(t,l\right),
\end{equation}
where $A(t,l)=H(t,l)-B(t,l)$ and
\begin{gather} \label{GEQ__20_}
B\left(t,l\right)=\left\| B_{jk} \right\| _{j,\, k=1}^{2} ,\, \,
\, B_{jk} \in B\left(\mathcal{H}^{n} \right), \\
\label{GEQ__21_} B_{11} =\mathrm{diag}\left(0,\, ...,\, 0,\,
s_{n} p_{n}^{-1} q_{n} /4\right),\, \, \, B_{12}
=\mathrm{diag}\left(0,\, ...,\, 0,\, -is_{n} p_{n}^{-1} /2\right),
\\
\label{GEQ__22_} B_{21} =\mathrm{diag}\left(0,\, ...,\, 0,\,
ip_{n}^{-1} q_{n} /2\right),\, \, \, B_{22}
=\mathrm{diag}\left(0,\, ...,\, 0,\, p_{n}^{-1} \right).
\end{gather}
In view of \eqref{GEQ__13_}, \eqref{GEQ__20_} - \eqref{GEQ__22_} one has
\begin{equation} \label{GEQ__2311_}
B\left(t,l\right)C\left(t,l\right)=\left\| u_{jk} \right\| _{j,\,
k=1}^{2n} ,\, \, \, u_{jk} \in B\left(\mathcal{H}\right),
\end{equation}
$u_{n2n} =-i{s_{n}  \mathord{\left/{\vphantom{s_{n}
2}}\right.\kern-\nulldelimiterspace} 2} ,\, \, \, u_{2n\, \, 2n}
=I_{1} $, rest $u_{jk} =0$.

Hence $$C^{*}
\left(t,l\right)B\left(t,l\right)C\left(t,l\right)=\left\|
{v}_{jk} \right\|_{j,k=1}^{2n} ,\, \, {v}_{jk} \in
B\left(\mathcal{H}\right),$$ ${ v}_{n\, \, 2n} =-\frac{i}{2}
\left(s_{n} -q_{n}^{*} \right)$, ${v}_{2n\, \, 2n} =p_{n}^{*} $,
rest $v_{jk} =0$.

Hence $C^{*} \left(t,l\right)\Im
H\left(t,l\right)C\left(t,l\right)=C^{*}
\left(t,l\right)W\left(t,l,-\Im l\right)C\left(t,l\right)$ in view
of \eqref{GEQ__7_}, \eqref{GEQ__8_}, \eqref{GEQ__9_},
\eqref{GEQ__19_} and the divergent form of the expression $-\Im l$ that follows from \eqref{GEQ__51+_}. The first equality in \eqref{GEQ__18_}
for even $r$ is proved. Its proof for odd $r$ follows from
\eqref{GEQ__15_}, \eqref{GEQ__16_}.

One can see from the proof that
\begin{equation} \label{GEQ__231_}
W\left(t,l,\Im l\right)=-\Im H\left(t,l\right).
\end{equation}

The second equality in \eqref{GEQ__18_} is a corollary of
\eqref{GEQ__231_} and \eqref{GEQ__17_}. Lemma \ref{lm1} is
proved
\end{proof}

For sufficiently smooth vector-function $f\left(t\right)$ by
corresponding capital letter we denote (if $f(t)$ has a subscript
then we add the same subscript to $F$)
\begin{multline} \label{GEQ__23_}
\mathcal{H}^{r} \ni F\left(t,\, l,m\right)=\\=\begin{cases}
\left(\sum\limits _{j=0}^{s/2}\oplus f^{\left(j\right)}
\left(t\right) \right)\oplus
0 \oplus ...\oplus 0 ,\quad r=2n,&r=2n+1>1,s<2n,\\
\left(\sum\limits _{j=0}^{n-1}\oplus f^{\left(j\right)}
\left(t\right) \right)\oplus 0 \oplus ...\oplus 0 \oplus \,
\left(-if^{\left(n\right)} \left(t\right)\right),&r=2n+1>1,
s=2n,\\ f\left(t\right),&r=1, \\ \left(\sum\limits
_{j=0}^{n-1}\oplus f^{\left(j\right)} \left(t\right) \right)\oplus
\left(\sum\limits _{j=1}^{n}\oplus f^{\left[r-j\right]}
\left(t\left|l\right. \right) \right),& r=s=2n\end{cases}
\end{multline}

From now on in equation \eqref{GEQ__4_}
\begin{equation*} \label{GEQ__241_}
p_{n}^{-1} \left(t\right)\in B\left(\mathcal{H}\right)\, \, \,
\left(r=2n\right);\, \left(q_{n+1} \left(t\right)+s_{n+1}
\left(t\right)\right)^{-1} \in B\left(\mathcal{H}\right)\, \, \,
\left(r=2n+1\right).
\end{equation*}

\begin{theorem}\label{th1}
Equation \eqref{GEQ__4_} is equivalent to the following first
order system
\begin{equation} \label{GEQ__24_}
\frac{i}{2} \left(\left(Q\left(t\right)\vec{y}\right)^{{'} }
+S\left(t\right)\vec{y}\hspace{2px}^\prime\right)+H\left(t\right)\vec{y}=W\left(t\right)F\left(t\right)
\end{equation}
with coefficients $Q\left(t\right)=Q\left(t,l\right)$,
$S(t)=S(t,l)$ \eqref{GEQ__6_}, \eqref{GEQ__14_},
$H(t)=H(t,l)$ \eqref{GEQ__7_}, \eqref{GEQ__15_}, weight
$W\left(t\right)=W\left(t,\, l^{*} ,\, m\right)$, and with
$F\left(t\right)=F\left(t,\, l^{*} ,m\right)$ that are obtained
from \eqref{GEQ__8_}, \eqref{GEQ__16_} and
\eqref{GEQ__23_} correspondingly with $l^{*} $ instead of $l$.
Namely if $y\left(t\right)$ is a solution of equation
\eqref{GEQ__4_} then
\begin{multline} \label{GEQ__25_}
\vec{y}\left(t\right)=\vec{y}\left(t,\, l,\, m,\,
f\right)=\\=\begin{cases}\left(\sum\limits _{j=0}^{n-1}\oplus
y^{\left(j\right)} \left(t\right) \right)\oplus \left(\sum\limits
_{j=1}^{n}\oplus \left(y^{\left[r-j\right]} \left(t\left|l\right.
\right)-f^{\left[s-j\right]}
\left(t\left|m\right. \right)\right) \right),& r=2n\\
\left(\sum\limits _{j=0}^{n-1}\oplus y^{\left(j\right)}
\left(t\right) \right)\oplus \left(\sum\limits _{j=1}^{n}\oplus
\left(y^{\left[r-j\right]} \left(t\left|l\right.
\right)-f^{\left[s-j\right]} \left(t\left|m\right. \right)\right)
\right)\oplus \left(-iy^{\left(n\right)} \left(t\right)\right),&
r=2n+1>1 \\\quad \text{{\rm (here }}f^{\left[k\right]}
\left(t\left|m\right.
\right)\equiv 0\text{ \rm as }k< \frac{s}{2}\text{\rm)}\\
\\y\left(t\right),& r=1\end{cases}
\end{multline}
is a solution of \eqref{GEQ__24_} with the coefficients,
weight and $F\left(t\right)$ menfioned above. Any solution of
equation \eqref{GEQ__24_} with such coefficients, weight and
$F\left(t\right)$ is equal to \eqref{GEQ__25_}, where
$y\left(t\right)$ is some solution of equation
\eqref{GEQ__4_}.
\end{theorem}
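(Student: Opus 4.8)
The plan is to prove the equivalence by direct verification, organizing everything around the change of variables encoded by the matrix $C(t,l)$ of \eqref{GEQ__9_}. The first observation is that, by \eqref{GEQ__10_}, the first $n$ entries of the quasi-derivative vector on the right of \eqref{GEQ__9_} are just $y,y',\dots,y^{(n-1)}$; hence the vector $\vec y$ of \eqref{GEQ__25_} is precisely $C(t,l)\,\mathrm{col}\{y,\dots,y^{(n-1)},y^{(2n-1)},\dots,y^{(n)}\}$ corrected in its last $n$ entries by the shift $\mathrm{col}\{0,\dots,0,f^{[s-1]}(t|m),\dots,f^{[s-n]}(t|m)\}$. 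So I would first record this factorization of $\vec y$ and note that $C(t,l)$ is boundedly invertible: by the block-triangular form \eqref{GEQ__13_} this reduces to invertibility of $C_{22}$, which is granted by the standing assumption $p_n^{-1}\in B(\mathcal H)$ for even $r$ (resp.\ $(q_{n+1}+s_{n+1})^{-1}\in B(\mathcal H)$ for odd $r$).

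Second, I would convert the defining recursions \eqref{GEQ__10_}--\eqref{GEQ__12_} into a first order system for the pure quasi-derivative vector $\vec Y=C(t,l)\,\mathrm{col}\{y,\dots,y^{(n)}\}$. The trivial identities $Dy^{(j)}=y^{(j+1)}$, $j=0,\dots,n-2$, give the derivatives of the first $n-1$ entries; relation \eqref{GEQ__11_} expresses $y^{(n)}$ through the quasi-derivative $y^{[n]}$ and thus supplies the derivative of the $n$-th entry; and \eqref{GEQ__12_}, read as a formula for $Dy^{[r-j-1]}$, supplies the derivatives of the remaining entries, the closing one being furnished by the equation itself in the form $y^{[r]}(t|l)=m[f]$ (this is the only place where $m[f]$ enters). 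The resulting raw system has a coefficient matrix assembled solely from $p_j,q_j,s_j$ and the leading $J$-block.

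Third, and this is the heart of the matter, I would identify this raw system with \eqref{GEQ__24_}, i.e.\ verify that the operator $\tfrac{i}{2}((Q\,\cdot)'+S\,\cdot')+H(\cdot)$, with $Q,S$ from \eqref{GEQ__6_}, \eqref{GEQ__14_} and $H$ from \eqref{GEQ__7_}, \eqref{GEQ__15_}, reproduces the raw system after passing through $C(t,l)$, matching coefficients row by row via the three-diagonal shape of the blocks $h_{\alpha\beta}$ and the upper-triangular shape of $C_{21},C_{22}$. In the even case $Q=S=J/i$ is constant, so the leading term collapses to $J\vec y'$, while the genuine $t$-dependence of $Q$ in the odd case is what necessitates the symmetrized form $\tfrac{i}{2}((Q\vec y)'+S\vec y')$. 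I expect the main obstacle to lie not on the left but on the right-hand side: one must show that $m[f]$ together with the shift terms $f^{[s-j]}(t|m)$ reassembles exactly into $W(t)F(t)=W(t,l^*,m)F(t,l^*,m)$, the delicate point being the appearance of $l^{*}$ rather than $l$. This occurs most sharply when $r=s=2n$, where it comes from the conjugating factor $C^{*-1}(t,l^{*})$ in \eqref{GEQ__8_}: applying $C^{-1}(t,l^{*})$ to $F(t,l^{*},m)$ (whose nonzero entries, by \eqref{GEQ__23_}, are the $l^{*}$-quasi-derivatives of $f$) returns the raw derivative vector of $f$; then $\{m_{\alpha\beta}\}$ turns it into the $m$-quasi-derivatives of $f$ that occur as the shift and as $m[f]$; and the outer $C^{*-1}(t,l^{*})$ places everything in the frame dictated by $H(t,l)$ on the left. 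Keeping the two distinct expressions $l$ and $l^{*}$ correctly bookkept through this chain is the computation that must be done carefully; I would dispose of $s<r$ by the footnote convention to \eqref{GEQ__8_}, \eqref{GEQ__16_} (setting the relevant $m_{\alpha\beta}$ to zero) so that no separate argument is needed.

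Finally, the converse inclusion is obtained by reading the construction backwards: given any solution of \eqref{GEQ__24_}, the first $n-1$ rows encoding $Dy^{(j)}=y^{(j+1)}$ force its first $n$ components to be a derivative chain $y,\dots,y^{(n-1)}$ of a single $y$; the invertibility of $C(t,l)$ then recovers the quasi-derivatives of $y$ from the remaining components, and the closing row reads off as $y^{[r]}(t|l)=m[f]$, i.e.\ $l[y]=m[f]$, so the given solution is exactly the $\vec y$ of \eqref{GEQ__25_}. The odd case $r=2n+1$ (where the weight of \eqref{GEQ__16_} carries no $C$-conjugation and $F$ reduces to ordinary derivatives of $f$) and the degenerate $r=1$ (where $C$ is trivial and no reduction is needed) are handled by the same scheme with the block data of \eqref{GEQ__14_}, \eqref{GEQ__15_}, \eqref{GEQ__16_}.
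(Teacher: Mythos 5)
Your overall plan is the same as the paper's: verify directly that $\vec y(t,l,m,f)$ of \eqref{GEQ__25_} solves \eqref{GEQ__24_}, then argue the converse. Two of your deviations are harmless or even attractive. First, your row-by-row matching through $C(t,l)$ replaces the paper's appeal to the Kogan--Rofe-Beketov identities (the identity \eqref{GEQ__261_} for the homogeneous part) combined with its Lemma \ref{lm4} for the right-hand side; this is the same computation, packaged more self-containedly. Second, your converse reconstructs $y$ by unwinding the rows of the system and using the invertibility of $C(t,l)$, whereas the paper solves the Cauchy problem for \eqref{GEQ__4_} with initial data $\vec y(0,l,m,f)=\vec{\tilde y}(0)$ and invokes uniqueness for the first-order system; both are sound, yours re-proving by hand what uniqueness gives for free.

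The genuine gap is the sentence ``I would dispose of $s<r$ by the footnote convention \dots so that no separate argument is needed.'' That is precisely the point where the paper needs two further lemmas (Lemmas \ref{lm2} and \ref{lm3}), and they are not cosmetic. The identity you propose to verify ``most sharply'' at $r=s=2n$ involves the quasi-derivatives $f^{[s-j]}(t|m)$ up to $f^{[s]}(t|m)=m[f]$, i.e.\ derivatives of $f$ of order up to $r$. If, for $s<r$, you try to obtain the degenerate case by padding $m$ with zero coefficients and specializing the already-proved identity, every step of that specialization presupposes that $f$ has $2\left[\frac{r}{2}\right]$ derivatives, while the theorem only supplies $f\in C^{s}$. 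The paper closes exactly this hole: Lemma \ref{lm2} controls the padded quasi-derivatives, and Lemma \ref{lm3} (Weierstrass-type approximation of $f\in C^s$ together with the corresponding solutions $y_k$ in $C^r$) lets one pass to the limit. The alternative is to redo the verification directly for $s<2n$ with $W(t,l^*,m)=\mathrm{diag}(m_{11},0)$ --- that computation is simpler and uses only $s$ derivatives of $f$, so it is legitimate --- but it is a separate computation, not a formal consequence of the $s=2n$ case. As written, your proposal neither performs that computation nor supplies the approximation argument, so the case $s<r$, which is the typical one, is left unproved.
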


Let us notice that different vector-functions $f(t)$ can generate
different right-hand-sides of equation \eqref{GEQ__24_} but unique right-hand-side of
equation \eqref{GEQ__4_}.

\begin{proof}
We need the following three lemmas.

\begin{lemma}\label{lm2}
Let $L_{\alpha } $ be a differential expression of $l$ type and of
order $\alpha$. Let us add to $L_{\alpha } $the expressions of
$i^{k} l_{k} $ type, where $k=\alpha +1,\, \ldots ,\, \beta $,
with coefficiens equal to zero. We obtain the expressions
$L_{\beta } $ which formally has the order $\beta $, but in fact
$L_{\beta } $ and $L_{\alpha } $ coincide. Then for sufficiently
smooth vector-function $f\left(t\right)$
$$f^{\left[\beta -j\right]} \left(t\left|L_{\beta } \right.
\right)=\begin{cases}f^{\left[\alpha -j\right]}
\left(t\left|L_{\alpha } \right. \right),& j=0,\, \ldots ,\,
\left[\frac{a+1}{2} \right], \\ 0,& j=\left[\frac{a+1}{2}
\right]+1,\, \ldots ,\, \left[\frac{\beta }{2} \right]
\end{cases}$$
(here $f^{[0]}(t|L_1)$ is defined by (\ref{GEQ__11_}) with
$r=1$).
\end{lemma}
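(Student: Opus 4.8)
This lemma concerns what happens to quasi-derivatives when we artificially inflate the order of a differential expression by padding it with zero coefficients. We start with $L_\alpha$, an expression of $l$-type and order $\alpha$, and add terms $i^k l_k$ for $k = \alpha+1, \ldots, \beta$ with all coefficients set to zero, obtaining a formally-order-$\beta$ expression $L_\beta$ that is literally identical to $L_\alpha$ as an operator. The claim describes the quasi-derivatives $f^{[\beta-j]}(t|L_\beta)$: the low-index ones (for $j$ up to $[(\alpha+1)/2]$) coincide with the corresponding quasi-derivatives of $L_\alpha$, while the high-index ones vanish.

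\begin{proof}[Proof plan]
The plan is to compute the quasi-derivatives of $L_\beta$ directly from the defining recurrences \eqref{GEQ__10_}--\eqref{GEQ__12_}, keeping careful track of which coefficients have been set to zero. The guiding observation is that all the padding coefficients $p_j, q_j, s_j$ for the inflated indices vanish, so the only active data in $L_\beta$ are exactly the coefficients of $L_\alpha$. First I would treat the top quasi-derivative: by \eqref{GEQ__11_}, $f^{[\beta']}(t|L_\beta)$ (where $\beta' = [\beta/2]$) is built solely from the leading coefficients $p_\beta$ or $q_{\beta+1}$, which are zero by construction, so this quasi-derivative vanishes; this launches the ``$0$'' branch of the claim. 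Then I would descend through \eqref{GEQ__12_}, which expresses $f^{[\beta-j-1]}$ in terms of $D f^{[\beta-j]}$ together with terms $p_j f^{(j)}$ and $\frac{i}{2}[s_{j+1}f^{(j+1)} - q_j f^{(j-1)}]$. As long as the relevant coefficients $p_j, s_{j+1}, q_j$ are among the padded (zero) ones and the predecessor quasi-derivative is still zero, the recurrence propagates zeros downward, establishing the second case.

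The transition index is where the padding coefficients give way to the genuine $L_\alpha$-coefficients. The key step is to verify that once we reach the quasi-derivatives of index governed by $j = [(\alpha+1)/2]$, the recurrence \eqref{GEQ__12_} for $L_\beta$ becomes, after deleting the vanishing padding terms, \emph{formally identical} to the recurrence for $L_\alpha$, with the inherited zero acting as the correct ``initial condition'' for the $L_\alpha$-recursion seeded at its own top. Concretely, I expect to show that $f^{[\beta-j]}(t|L_\beta) = f^{[\alpha-j]}(t|L_\alpha)$ for these indices by matching the two recurrences term-by-term: the low quasi-derivatives $f^{[j]} = f^{(j)}$ agree trivially by \eqref{GEQ__10_} (since ordinary derivatives do not depend on any coefficients), and the descending recurrences then agree because the active coefficients $p_j, q_j, s_j$ with $j \le [(\alpha\pm 1)/2]$ are shared between the two expressions while the extra terms in the $L_\beta$-recurrence all carry padded (zero) coefficients.

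The main obstacle will be the bookkeeping at the junction between the two regimes, particularly handling the parity of $r=2n$ versus $r=2n+1$ and the exact range of indices, since the floor functions $[(\alpha+1)/2]$ and $[\beta/2]$ and the asymmetric roles of $p$, $q$, $s$ in \eqref{GEQ__11_} and \eqref{GEQ__12_} must be reconciled so that the cutoff $j = [(\alpha+1)/2]$ lands exactly where the last genuine $L_\alpha$-coefficient enters. I would handle the even and odd cases for both $\alpha$ and $\beta$ separately, and treat the base case $\alpha = 1$ (where $f^{[0]}(t|L_1)$ is given by \eqref{GEQ__11_} with $r=1$) as explicitly flagged in the statement. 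Once the indexing is pinned down, the verification is a routine induction downward on the quasi-derivative index, so the essential content is the alignment of the two recurrences rather than any analytic difficulty.
\end{proof}
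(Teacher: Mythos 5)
Your plan is correct and is essentially the paper's own proof, which consists of the single sentence that Lemma \ref{lm2} ``follows from formulae \eqref{GEQ__11_}--\eqref{GEQ__12_} for quasi-derivatives'': seed the recursion at the quasi-derivative defined by \eqref{GEQ__11_}, propagate zeros through \eqref{GEQ__12_} while every coefficient entering it is a padded zero, and check that at $j=[(\alpha+1)/2]$ the $L_\beta$-recurrence, with its inherited zero and its padded terms deleted, reproduces \eqref{GEQ__11_} for $L_\alpha$, after which the two recursions agree term by term. Two details to fix in your bookkeeping: formula \eqref{GEQ__12_} expresses $f^{[\beta-j]}$ through $Df^{[\beta-j-1]}$ (not the reverse), so the zeros propagate toward \emph{higher} quasi-derivative indices, i.e.\ decreasing $j$; and in the edge case $\beta=\alpha+1$ with $\alpha$ odd, the quasi-derivative $f^{[[\beta/2]]}(t|L_\beta)$ given by \eqref{GEQ__11_} does \emph{not} vanish, since the coefficient $q_{(\alpha+1)/2}$ appearing there is a genuine coefficient of $L_\alpha$ --- instead it equals $f^{[(\alpha-1)/2]}(t|L_\alpha)$ directly, which is consistent with the lemma because the zero branch is empty in that case.
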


\begin{proof}
Proof of Lemma \ref{lm2} follows from formulae
\eqref{GEQ__11_} -- \eqref{GEQ__12_} for
quasi-derivatives. \end{proof}

\begin{lemma}\label{lm3}
Let $f\left(t\right)\in C^{s} \left(\left[\alpha ,\beta
\right],\mathcal{H}\right),\, y\left(t\right)$ is a solution of
corresponding equation \eqref{GEQ__4_}. Then the sequence
$f_{k} \left(t\right)\in C^{\infty } \left(\left[\alpha ,\beta
\right],\, \mathcal{H}\right)$ and solutions $y_{k}
\left(t\right)$ of equation \eqref{GEQ__4_} with
$f\left(t\right)=f_{k} \left(t\right)$ exist such that
\[f_{k} \left(t\right)\stackrel{C^{s} \left(\left[\alpha ,\beta \right],
\mathcal{H}\right)}{\longrightarrow} f\left(t\right),\, \, \,
y_{k} \left(t\right)\stackrel{C^{r} \left(\left[\alpha ,\beta
\right],\mathcal{H}\right)}{\longrightarrow} y\left(t\right).\]
\end{lemma}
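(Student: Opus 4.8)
The plan is to approximate $f$ by mollification and then to single out the approximating solutions $y_k$ by prescribing the \emph{same} quasi-derivative data at the left endpoint as the given $y$, so that the convergence $y_k\to y$ becomes an instance of continuous dependence of the solution of a regular linear ODE on its right-hand side. Everything is routine except for one point I want to flag at the outset: the first-order (quasi-derivative) system controls only the quasi-derivatives of orders $0,\dots,r-1$, so the genuinely delicate step will be to upgrade the resulting uniform convergence to convergence of the top ordinary derivative $y^{(r)}$; this is where the invertibility of the leading coefficient is essential.

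First I would construct $f_k$. After extending $f$ to a $C^{s}$ function on a slightly larger interval (for instance by its Taylor polynomials of order $s$ at the endpoints $\alpha,\beta$), set $f_k=f*\rho_k$ with smooth mollifiers $\rho_k$. Then $f_k\in C^{\infty}([\alpha,\beta],\mathcal H)$, and since each $f^{(j)}$ with $j\le s$ is uniformly continuous on the compact interval, $f_k^{(j)}\to f^{(j)}$ uniformly; hence $f_k\to f$ in $C^{s}([\alpha,\beta],\mathcal H)$. Because $m[\,\cdot\,]$ has order $s$ with bounded coefficients, this already yields $m[f_k]\to m[f]$ uniformly, which is the only mode of convergence of the right-hand side I will need.

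Next I would invoke the classical reduction of the single $r$-th order equation $l[y]=g$ (with $g=m[f]$) to a first-order system in the quasi-derivatives; this is available directly from \eqref{GEQ__10_}--\eqref{GEQ__12_} (cf.\ \cite{RBUpsala}) and, importantly, does \emph{not} use Theorem \ref{th1}, so no circularity arises. Writing $\vec z=\mathrm{col}\,(y^{[0]},\dots,y^{[r-1]})$ and using the invertibility of the leading coefficient ($p_n^{-1}\in B(\mathcal H)$ for $r=2n$, resp.\ $(q_{n+1}+s_{n+1})^{-1}\in B(\mathcal H)$ for $r=2n+1$) to solve \eqref{GEQ__11_}--\eqref{GEQ__12_} for the derivatives, one obtains a system $\vec z{}'=A(t)\vec z+b(t)$ with $A\in C([\alpha,\beta],B(\mathcal H^{r}))$ and with $b(t)$ depending linearly and continuously on the value $g=m[f]$. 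I then let $y_k$ be the (classical, since $f_k\in C^{\infty}$) solution of $l[y_k]=m[f_k]$ with the matched data $y_k^{[j]}(\alpha)=y^{[j]}(\alpha)$, $j=0,\dots,r-1$; the associated $\vec z_k$ solve the same system with right-hand side $b_k\to b$ uniformly and with $\vec z_k(\alpha)=\vec z(\alpha)$. Applying Gronwall's inequality to $\vec z_k-\vec z$ gives $\|\vec z_k-\vec z\|_{C^{0}}\le C\|b_k-b\|_{C^{0}}\to0$, that is, $y_k^{[j]}\to y^{[j]}$ uniformly for $j=0,\dots,r-1$.

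Finally I would pass from quasi-derivatives to ordinary derivatives and handle the top order. By \eqref{GEQ__9_}, \eqref{GEQ__13_} (and analogously for odd $r$) the transition matrix $C(t,l)$ is block lower-triangular with invertible diagonal blocks (the nontrivial block $C_{22}$ being upper-triangular with diagonal entries $\pm p_n$), hence invertible with uniformly bounded, continuous inverse; applying $C^{-1}(t,l)$ to the uniformly convergent quasi-derivatives yields $y_k^{(j)}\to y^{(j)}$ uniformly for $j=0,\dots,r-1$, i.e.\ $y_k\to y$ in $C^{r-1}$. The main obstacle, as anticipated, is the $r$-th derivative, which the system does not furnish; here I return to the equation. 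Expanding $l$ from \eqref{GEQ__51+_} into ordinary-derivative form, its coefficient of $y^{(r)}$ equals $(-1)^{n}p_n$ for $r=2n$, resp.\ $\tfrac{(-1)^{n}i}{2}(q_{n+1}+s_{n+1})$ for $r=2n+1$, which is invertible, while every remaining term carries a derivative of order $\le r-1$ with continuous coefficient. From $l[y_k-y]=m[f_k-f]\to0$ uniformly together with the already-established $(y_k-y)^{(j)}\to0$ for $j\le r-1$, the leading term $(\text{invertible})\cdot(y_k-y)^{(r)}$ tends to $0$ uniformly, whence $(y_k-y)^{(r)}\to0$ uniformly. This establishes $y_k\to y$ in $C^{r}$ and completes the argument.
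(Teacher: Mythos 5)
Your strategy---smooth approximation of $f$ together with continuous dependence of solutions of the reduced first-order system on the right-hand side, plus recovery of the top derivative from the equation via the invertible leading coefficient---is in substance exactly what the paper's one-line proof encodes (it cites the Weierstrass theorem for vector-functions \cite{Schwartz} and the Cauchy-problem formula (1.21) of \cite{DalKrein}, which plays the role of your Gronwall step). Your mollification of $f$, the matched initial data, the Gronwall estimate, the triangular change of variables via $C(t,l)$, and the final upgrade to $C^{r}$ through the equation are all sound, and for even $r$ the argument is complete.

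However, one step fails as written in the odd case. For $r=2n+1$ the vector $\vec z=\mathrm{col}\,(y^{[0]},\dots,y^{[r-1]})$ does \emph{not} satisfy a closed first-order system under the paper's hypotheses: by \eqref{GEQ__11_} one has $y^{[n]}=-\tfrac{i}{2}\,q_{n+1}y^{(n)}$, and writing the equation for $\dot y^{[n-1]}=y^{(n)}$ (as well as the occurrences of $y^{(n)}$, $y^{(n+1)}$ in the recursion \eqref{GEQ__12_}) requires expressing $y^{(n)}$ as a bounded linear function of the components of $\vec z$, i.e.\ requires $q_{n+1}^{-1}\in B(\mathcal H)$. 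But the standing assumption is only $\left(q_{n+1}+s_{n+1}\right)^{-1}\in B(\mathcal H)$, and $q_{n+1}$ alone may be non-invertible (it may even have a kernel, in which case $\vec z$ loses the information about $y^{(n)}$ altogether); note this already bites at $r=1$. The repair is immediate and stays entirely inside your argument: for odd $r$ take as coordinates the vector the paper itself uses in \eqref{GEQ__25_} with $f\equiv 0$, namely $\left(y,\dots,y^{(n-1)},\,y^{[2n]},\dots,y^{[n+1]},\,-iy^{(n)}\right)$, keeping $y^{(n)}$ itself as a component in place of $y^{[n]}$. Then $\dot y^{(n)}=y^{(n+1)}$ is recovered from $y^{[n+1]}$, $y^{(n)}$, $y^{(n-1)}$ by inverting $\tfrac{i}{2}\left(q_{n+1}+s_{n+1}\right)$ (using $q_{n+1}\in C^{n+1}$ so that $q_{n+1}'$ is continuous), the system closes with continuous coefficients and right-hand side depending linearly on $m[f]$, and your Gronwall estimate, the triangular passage back to ordinary derivatives, and the $C^{r}$ upgrade then go through verbatim.
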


This is trivial consequence of Weierstrass theorem for
vector-functions \cite{Schwartz} and formula (1.21) from
\cite{DalKrein}.

\begin{lemma}\label{lm4}
Let vector-function $f\left(t\right)\in C^{s}
\left(\bar{\mathcal{I}},\, \mathcal{H}\right)$. Then
\begin{multline} \label{GEQ__251_}
W\left(t,\, l^{*} ,\, m\right)F\left(t,\, l^{*}
,m\right)=\\=\begin{cases} \left(\sum\limits _{j=0}^{s/2-1}\oplus
\left(f^{\left[s-j\right]} \left(t\left|m\right.
\right)+\left(f^{\left[s-j-1\right]} \left(t\left|m\right.
\right)\right)^{{'} } \right)\right)\oplus\\\qquad\oplus
f^{\left[s/2\right]} \left(t\left|m\right. \right)\oplus 0\oplus
\ldots \oplus 0,& r=2n+1, r=2n, 0<s<2n  \\
\left(\sum\limits _{j=0}^{s/2-1}\oplus \left(f^{\left[s-j\right]}
\left(t\left|m\right. \right)+\left(f^{\left[s-j-1\right]}
\left(t\left|m\right. \right)\right)^{{'} } \right)
\right)\oplus\\\qquad\oplus 0\oplus \ldots +\oplus 0\oplus
\left(-if^{\left[n\right]}
\left(t\left|m\right. \right)\right),& r=2n+1, s=2n>0 \\
\tilde{p}_{0} \left(t\right)f\left(t\right)\oplus 0\oplus \ldots
\oplus 0,& s=0 \\\bigg[ \left(\sum\limits _{j=0}^{s/2-1}\oplus
\left(f^{\left[s-j\right]} \left(t\left|m\right.
\right)+\left(f^{\left[s-j-1\right]} \left(t\left|m\right.
\right)\right)^{{'} } \right) \right)\oplus\\\qquad\oplus 0\oplus
\ldots \oplus 0\bigg]+H\left(t,l\right)\left(0\oplus \ldots \oplus
0\oplus f^{\left[n\right]} \left(t\left|m\right. \right)\right),&
r=s=2n
\end{cases}
\end{multline}
\end{lemma}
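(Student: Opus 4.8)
The plan is to establish \eqref{GEQ__251_} by a direct block computation, treating separately the parity of $r$ and whether the order $s$ of $m$ lies strictly below the top order or equals it. In every case I would first write $F(t,l^{*},m)$ explicitly from \eqref{GEQ__23_} (with $l^{*}$ in place of $l$) as a column in $\mathcal{H}^{n}\oplus\mathcal{H}^{n}$ (even $r$) or $\mathcal{H}^{n}\oplus\mathcal{H}^{n+1}$ (odd $r$), and write $W(t,l^{*},m)$ in the factorized form \eqref{GEQ__8_} (even $r$) or directly as $\|m_{\alpha\beta}\|$ \eqref{GEQ__16_} (odd $r$). The two computational engines are the tridiagonal/diagonal structure of the blocks $m_{\alpha\beta}$ together with the quasi-derivative recursion \eqref{GEQ__12_}, which converts the tridiagonal action on a column of ordinary derivatives into the combinations $\tilde{p}_{j}f^{(j)}+\frac{i}{2}(\tilde{s}_{j+1}f^{(j+1)}-\tilde{q}_{j}f^{(j-1)})=f^{[s-j]}(t|m)+(f^{[s-j-1]}(t|m))'$ and the edge term $f^{[s/2]}(t|m)=\tilde{p}_{s/2}f^{(s/2)}-\frac{i}{2}\tilde{q}_{s/2}f^{(s/2-1)}$ coming from \eqref{GEQ__11_}.

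For the subcritical cases $s=0$ and $0<s<2n$ the computation is clean. By the footnote to \eqref{GEQ__8_} one has $m_{12}=m_{21}=m_{22}=0$, so $\|m_{\alpha\beta}\|=\mathrm{diag}(m_{11},0)$. In the even case I would use that, by \eqref{GEQ__13_}, $C(t,l^{*})$ and hence $C^{-1}(t,l^{*})$, $C^{*-1}(t,l^{*})$ are block triangular with identity corner block; consequently the first block of $C^{-1}(t,l^{*})F$ coincides with the (only nonzero) first block of $F$, the vanishing second block of $\mathrm{diag}(m_{11},0)C^{-1}(t,l^{*})F$ is preserved by $C^{*-1}(t,l^{*})$, and $W(t,l^{*},m)F$ reduces to $m_{11}$ applied to $(f,\dots,f^{(s/2)},0,\dots,0)$. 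The recursion \eqref{GEQ__12_} then produces exactly the first and third lines of \eqref{GEQ__251_}; Lemma \ref{lm2} is what justifies interpreting $m$ of order $s<2n$ as a formally $2n$-th order expression with vanishing leading coefficients, so that the quasi-derivatives appearing are the genuine $m$-quasi-derivatives and the higher components vanish. The odd case is simpler still, since there $W(t,l^{*},m)=\|m_{\alpha\beta}\|$ acts with no conjugation.

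The critical (top-order) cases carry the real content. For $r=2n+1$, $s=2n$ (second line) I would multiply $\|m_{\alpha\beta}\|$ \eqref{GEQ__16_} directly against $F=(f,\dots,f^{(n-1)},0,\dots,0,-if^{(n)})$: the active blocks $m_{12}$ and $m_{21},m_{22}$ now contribute, the first block again assembles into the combinations above, and the last component collapses to $-i(\tilde{p}_{n}f^{(n)}-\frac{i}{2}\tilde{q}_{n}f^{(n-1)})=-if^{[n]}(t|m)$ via \eqref{GEQ__11_}. For $r=s=2n$ (fourth line) I would first invoke \eqref{GEQ__9_} with $l^{*}$: since $F(t,l^{*},m)$ is precisely the column of $l^{*}$-quasi-derivatives, $C^{-1}(t,l^{*})F(t,l^{*},m)$ equals the column $X=\mathrm{col}\{f,\dots,f^{(n-1)},f^{(2n-1)},\dots,f^{(n)}\}$ of ordinary derivatives. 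Computing $\|m_{\alpha\beta}\|X$ then gives the first block the clean combinations $f^{[s-j]}(t|m)+(f^{[s-j-1]}(t|m))'$ and a nonvanishing second block $(0,\dots,0,f^{[n]}(t|m))$.

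The main obstacle is the final application of $C^{*-1}(t,l^{*})$ in this last case. Unlike the subcritical cases, the second block is now nonzero, so $C^{*-1}(t,l^{*})$ transports it into a correction living in the last slot of the first block. I expect the heart of the proof to be the verification that these corrections reassemble exactly into $H(t,l)$ applied to $(0,\dots,0,f^{[n]}(t|m))$, i.e. the identity $C^{*-1}(t,l^{*})\left(\begin{smallmatrix}V_{1}\\V_{2}\end{smallmatrix}\right)=\left(\begin{smallmatrix}V_{1}\\0\end{smallmatrix}\right)+H(t,l)\left(\begin{smallmatrix}0\\V_{2}\end{smallmatrix}\right)$ on the relevant last coordinate. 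This amounts to matching the last columns of $C^{*-1}(t,l^{*})$ against those of $H(t,l)$ from \eqref{GEQ__7_}, and the delicate point is that $C(t,l^{*})$ is built from the coefficients of $l^{*}$: because passing to the adjoint interchanges the roles of $q_{n}$ and $s_{n}$ (so that $q_{n}(l^{*})=s_{n}^{*}$, consistently with \eqref{GEQ__17_}), the entry $\frac{i}{2}s_{n}$ produced by $C_{21}^{*}(t,l^{*})$ matches precisely the $-\frac{i}{2}s_{n}p_{n}^{-1}$ and $p_{n}^{-1}$ entries in the last column of $H(t,l)$. Tracking this adjoint swap correctly is exactly where the non-symmetry of $l$ enters, and is the step most prone to sign/coefficient error. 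Should regularity of the differentiations be a concern, one may first assume $f$ smooth, establish the identity, and pass to $f\in C^{s}$ by the approximation Lemma \ref{lm3}.
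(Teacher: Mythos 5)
Your proposal is correct and takes essentially the same approach as the paper: in the critical case $r=s=2n$ your identity for $C^{*-1}(t,l^{*})$, multiplied through by $C^{*}(t,l^{*})$, is precisely the paper's equality \eqref{GEQ__26_}, whose verification rests on exactly the two facts you isolate — that $C^{*}(t,l^{*})$ fixes vectors supported in the first block, and that the last column of $C^{*}(t,l^{*})H(t,l)$ equals $col\left(0,\ldots,0,I_{1}\right)$ (which is where the adjoint swap $q_{n}(l^{*})=s_{n}^{*}$ enters) — and the case $r=2n+1$, $s=2n$ is handled by the same direct multiplication using \eqref{GEQ__16_}, \eqref{GEQ__11_}, \eqref{GEQ__12_}. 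The only minor deviation is that you compute the subcritical case $s<2n$ directly from the block-triangular structure \eqref{GEQ__13_} and the footnote to \eqref{GEQ__8_}, whereas the paper deduces it from the case $s=2n$ via Lemmas \ref{lm2}, \ref{lm3} and the vanishing of the entries of $W(t,l^{*},m)$ beyond row and column $s/2$; this is an immaterial difference.
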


Let us notice that $W(t,l^*,m)F(t,l^*,m)$ does not change if the
null-components in $F(t,l^*,m)$ we change by any
$\mathcal{H}$-valued vector-functions.

\begin{proof}
Let us prove Lemma \ref{lm4} for $r=s=2n$. It is sufficient to
verify that
\begin{multline}\label{GEQ__26_}
\left(\left\| m_{\alpha \beta } \left(t\right)\right\| _{\alpha
,\beta =1}^{2} \right)col\left\{f\left(t\right),\,
f'\left(t\right),\, ...,\, f^{\left(n-1\right)} \left(t\right),\,
f^{\left(2n-1\right)} \left(t\right),\, ...\, f^{\left(n\right)}
\left(t\right)\right\}= \\=C^{*} \left(t,l^{*}
\right)\left\{\left[\left(\sum\limits
_{j=0}^{n-1}\left(f^{\left[r-j\right]} \left(t\left|m\right.
\right)+\left(f^{[r-j-1]} \left(t\left|m\right.
\right)\right)^{{'} } \right)\right)\oplus 0\oplus ...\oplus 0
\right]+\right.\\\left. +H\left(t\left|l\right.
\right)\left(0\oplus 0\oplus ...\oplus 0\oplus f^{\left[n\right]}
\left(t\left|m\right. \right)\right)\right\}.
\end{multline}

But in view of \eqref{GEQ__8_}, \eqref{GEQ__11_},
\eqref{GEQ__12_} the left side of equality
\eqref{GEQ__26_} is equal to
\[\left(\sum\limits _{j=0}^{n-1}\oplus \left(f^{\left[r-j\right]} \left(t\left|m\right. \right)\right)+\left(f^{\left[r-j-1\right]} \left(t\left|m\right. \right)\right)^{{'} }  \right)\oplus {\rm O} \oplus ...\oplus {\rm O} \oplus f^{\left[n\right]} \left(t\left|m\right. \right).\]
And hence equality \eqref{GEQ__26_} is true since
$C\left(t,l^{*} \right)\left[\ldots \right]=\left[\ldots \right]$
and the last column of $C^{*} \left(t,l^{*}
\right)H\left(t,l\right)$ is equal to $col\left(0,...,\, 0,\,
I_{1} \right)$ in view of \eqref{GEQ__7_},
\eqref{GEQ__13_}.

The proof for $r=2n+1,\, s=2n$ is carried out via direct
calculation using \eqref{GEQ__16_}, \eqref{GEQ__11_},
\eqref{GEQ__12_}.

The proof for $s<2n$ follows from the case $s=2n$ consicered
above, Lemmas \ref{lm2}, \ref{lm3} and fact that elements $u_{jk}
\in B\left(\mathcal{H}\right)$ of matrix $W\left(t,l^{*}
,m\right)$ are equal to zero if $s<2n$ and $i>{s
\mathord{\left/{\vphantom{s 2}}\right.\kern-\nulldelimiterspace}
2} $ or $j>{s \mathord{\left/{\vphantom{s
2}}\right.\kern-\nulldelimiterspace} 2} $. Lemma \ref{lm4} is
proved.
\end{proof}

Let us return to the proof of Theorem \ref{th1}. Let
$y\left(t\right)$ is a solution of equation \eqref{GEQ__4_}.
Then
\begin{multline} \label{GEQ__261_}
\frac{i}{2}
\left\{\left(Q\left(t,l\right)\vec{y}\left(t,l,m,0\right)\right)^{{'}
}
+S\left(t,l\right)\vec{y}\hspace{2px}'\left(t,l,m,0\right)\right\}-
H\left(t,l\right)\vec{y}\left(t,l,m,0\right)=\\=\mathrm{diag}\left(y^{\left[r\right]}
\left(t\left|l\right. \right),0,\ldots ,0\right)
\end{multline}
in view of formulae that are analogues to formulae (4.10), (4.11),
(4.24), (4.25) from \cite{KoganRB}. Using \eqref{GEQ__261_}
and Lemma \ref{lm4} we show via direct calculations that
$\vec{y}\left(t,l,m,f\right)$ \eqref{GEQ__25_} is a solution
of \eqref{GEQ__24_} for $r=s=2n,\, \, r=2n+1,\, \, s=2n$.
Therefore in view of Lemmas \ref{lm2}, \ref{lm3}\
$\vec{y}\left(t,l,m,f\right)$ is a solution of
\eqref{GEQ__24_} for $s<2n$.

Conversely let $\vec{\tilde{y}}\left(t\right)=col\left(y_{1}
,\ldots ,y_{r} \right)$ is a solution of \eqref{GEQ__24_}. Let
$y\left(t\right)$ is a solution of Cauchy problem that is obtained
by adding the initial condition
$\vec{y}\left(0,l,m,f\right)=\vec{\tilde{y}}\left(0\right)$ to
equation \eqref{GEQ__4_}. Then
$\vec{\tilde{y}}\left(t\right)=\vec{y}\left(t,l,m,f\right)$ in
view of existence and uniqueness theorem. Theorem \ref{th1} is
proved
\end{proof}

Let us notice that Theorem \ref{th1} remains valid if
null-components of $F(t,l^*,m)$ we change by any
$\mathcal{H}$-valued vector-functions.

For differential expression $L=\sum\limits _{k=0}^{R}i^{k} L_{k}
$, where $L_{2j} =D^{j} P_{j} \left(t\right)D^{j}$,\\ $L_{2j-1}
=\frac{1}{2} D^{j-1} \left\{DQ_{j} \left(t\right)+S_{j}
\left(t\right)\, D\right\}D^{j-1} $, we denote by
\begin{equation} \label{GEQ__27_}
L\left[f,\, g\right]=\int_{\mathcal{I}}L\left\{f,g\right\}dt ,
\end{equation}
the bilinear form which corresponds to Dirichlet integral for this
expression. Here
\begin{multline} \label{GEQ__28_}
L\left\{f,g\right\}=\sum\limits _{j=0}^{\left[{R
\mathord{\left/{\vphantom{R 2}}\right.\kern-\nulldelimiterspace}
2} \right]}\left(P_{j} \left(t\right)f^{\left(j\right)}
\left(t\right),\, g^{\left(j\right)} \left(t\right)\right)+\\+
\frac{i}{2} \sum\limits _{j=1}^{\left[\frac{R+1}{2}
\right]}\left(S_{j} \left(t\right)f^{\left(j\right)}
\left(t\right),\, g^{\left(j-1\right)}
\left(t\right)\right)-\left(Q_{j}
\left(t\right)f^{\left(j-1\right)}
\left(t\right),g^{\left(j\right)} \left(t\right)\right)
\end{multline}

\begin{theorem}[On the relationships between bilinear
forms]\label{th2} Let $f\left(t\right),\, y\left(t\right),\, f_{k}
\left(t\right),\, y_{k} \left(t\right)\, \left(k=1,2\right)$ be
sufficiently smooth vector-function. Starting from these functions
by the formulae (\ref{GEQ__23_}), (\ref{GEQ__25_}) we
construct $F(t,l,m)$, $F_k(t,l,m)$, $\vec{y}(t,l,m,f)$,
$\vec{y}_k(t,l,m,f_k)$. Then:
\\
1.
\begin{equation} \label{GEQ__29_}
\left(W\left(t,\, l,\, m\right)F_{1} \left(t,l,m\right),\, F_{2}
\left(t,l,m\right)\right)=m\left\{f_{1} ,f_{2} \right\}.
\end{equation}
2. a) If the order of $\Im l$ is even, then
\begin{multline} \label{GEQ__30_}
\left(W\left(t,l,-\Im l\right)\vec{y}\left(t,l,m,f\right),\,
\vec{y}\left(t,\, l,\, m,\, f\right)\right)-\Im \left(W\left(t,\,
l^{*} ,m^{*} \right)\vec{y}\left(t,\, l,m,f\right)\right),\,
F\left(t,l^{*} ,m\right)=\\=-\left(\Im
l\right)\left\{y,y\right\}-\Im\left( m^{*}
\left\{y,f\right\}\right).
\end{multline}
\quad\ b)
\begin{multline}\label{GEQ__31_}
m\left\{y_{1} ,f_{2} \right\}-m\left\{f_{1} ,y_{2} \right\}=\\=
\left(W\left(t,l,m\right)\vec{y}_{1} \left(t,l,m,f_{1}
\right),F_{2} \left(t,l,m\right)\right)-\left(W\left(t,l^{*}
,m\right)F_{1} \left(t,l^{*} ,m\right),\vec{y}_2\left(t,l^{*}
,m^{*} ,f_{2} \right)\right),
\end{multline}
although for $r=s$ the corresponding terms in the right-and
left-hand side of \eqref{GEQ__30_} and \eqref{GEQ__31_} do
not coincide.
\end{theorem}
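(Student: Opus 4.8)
The plan is to establish the scalar identity of Part~1 first, since it is the computational engine behind Part~2. For Part~1 I would exploit the defining relation \eqref{GEQ__9_} of the matrix $C(t,l)$. In the principal case $r=s=2n$ the column $F(t,l,m)$ from \eqref{GEQ__23_} is exactly the right-hand side of \eqref{GEQ__9_}: its first $n$ entries $f,\dots,f^{(n-1)}$ equal the quasi-derivatives $f^{[0]},\dots,f^{[n-1]}$ by \eqref{GEQ__10_}, and its last $n$ entries are $f^{[2n-1]},\dots,f^{[n]}$. Hence $C^{-1}(t,l)F(t,l,m)=\mathrm{col}\{f,\dots,f^{(n-1)},f^{(2n-1)},\dots,f^{(n)}\}$. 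Substituting the definition \eqref{GEQ__8_}, namely $W=C^{*-1}\{\|m_{\alpha\beta}\|\}C^{-1}$, the pairing collapses to $(\{\|m_{\alpha\beta}\|\}\,C^{-1}F_1,\,C^{-1}F_2)$, a finite bilinear form of the ordinary derivatives of $f_1,f_2$ against the explicit tridiagonal block $\|m_{\alpha\beta}\|$ built from $\tilde p_j,\tilde q_j,\tilde s_j$. Expanding this sum termwise and comparing with \eqref{GEQ__28_} written for $m$ reproduces $m\{f_1,f_2\}$. For odd $r$ one repeats the computation with $W=\|m_{\alpha\beta}\|$ from \eqref{GEQ__16_}, and the passage from $s<r$ to $s=r$ is handled by Lemmas~\ref{lm2} and \ref{lm3} together with the vanishing of the entries of $W$ outside the leading $\tfrac{s}{2}$-block noted in the proof of Lemma~\ref{lm4}.

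For Part~2a I would first rewrite $W(t,l,-\Im l)=\Im H(t,l)$ by Lemma~\ref{lm1}; since $\Im H$ is Hermitian, the first term equals $\Im\bigl(H(t,l)\vec y,\vec y\bigr)$. Two observations then drive the computation. First, decomposing $\vec y(t,l,m,f)$ according to \eqref{GEQ__25_} into a part built from $y$ alone (an $F$-type column for $l$) and a correction formed from the quasi-derivatives $f^{[s-j]}(t|m)$, Part~1 applied with $m$ replaced by $-\Im l$ --- legitimate in view of Lemma~\ref{lm1} and the null-component remark following Lemma~\ref{lm4} --- pairs the $y$-part directly into $-(\Im l)\{y,y\}$, the first term on the right of \eqref{GEQ__30_}. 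Second, by \eqref{GEQ__17_} one has $W^{*}(t,l^{*},m^{*})=W(t,l^{*},m)$, so the term $\Im\bigl(W(t,l^{*},m^{*})\vec y,F(t,l^{*},m)\bigr)$ equals, up to sign, $\Im\bigl(W(t,l^{*},m)F(t,l^{*},m),\vec y\bigr)$; substituting the canonical system \eqref{GEQ__24_} satisfied by $\vec y$ (Theorem~\ref{th1}) for $H(t,l)\vec y$ then lets this combine with the remaining cross terms, and a component computation via \eqref{GEQ__10_}--\eqref{GEQ__12_} identifies the total with $-\Im\bigl(m^{*}\{y,f\}\bigr)$.

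Part~2b is the purely bilinear Lagrange-type analogue, which I would treat by polarizing Part~1. Writing $m\{y_1,f_2\}$ and $m\{f_1,y_2\}$ as weighted pairings of $F$-columns through \eqref{GEQ__29_}, and then inserting the decomposition \eqref{GEQ__25_} of $\vec y_1(t,l,m,f_1)$ and of $\vec y_2(t,l^{*},m^{*},f_2)$, the ``main'' $F$-against-$F$ pairings reproduce exactly $m\{y_1,f_2\}-m\{f_1,y_2\}$. What survives are the correction terms carrying the $f$-quasi-derivatives; transferring $W$ across the inner product by \eqref{GEQ__17_} and, in the critical case $r=s=2n$, retaining the extra summand $H(t,l)\bigl(0\oplus\cdots\oplus0\oplus f^{[n]}(t|m)\bigr)$ supplied by Lemma~\ref{lm4}, these corrections cancel.

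The main obstacle is exactly this bookkeeping of the correction and derivative terms. For $r>s$ they match termwise, but for $r=s$ --- as the statement itself cautions --- the corresponding terms on the two sides of \eqref{GEQ__30_} and \eqref{GEQ__31_} do not individually coincide: one must keep the additional $H(t,l)$-contribution of Lemma~\ref{lm4} and the boundary-type pieces of the canonical system and verify that only their full combinations agree. Carrying out the component-level identification of these residual terms with the Dirichlet forms $(\Im l)\{y,y\}$ and $m^{*}\{y,f\}$, using \eqref{GEQ__10_}--\eqref{GEQ__12_}, \eqref{GEQ__17_}, and \eqref{GEQ__231_}, is the technically delicate step.
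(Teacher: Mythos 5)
Your Part 1 and Part 2b are correct and essentially coincide with the paper's own argument: Part 1 is the direct computation from \eqref{GEQ__8_}, \eqref{GEQ__16_}, \eqref{GEQ__23_}, \eqref{GEQ__28_} (your use of \eqref{GEQ__9_} to collapse $C^{*-1}\{\cdot\}C^{-1}$ is exactly why it works), and Part 2b is the paper's computation \eqref{GEQ__34_}--\eqref{GEQ__35_}, where both pairings carry the identical correction $\left(p_n^{-1}f_1^{[n]}(t|m),f_2^{[n]}(t|m^*)\right)$, which cancels on subtraction.

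Part 2a, however, has a genuine gap. Theorem \ref{th2} is asserted for \emph{arbitrary} sufficiently smooth $y,f$: no equation relates them, and $\vec y(t,l,m,f)$, $F(t,l^*,m)$ are purely formal columns. Your key step --- ``substituting the canonical system \eqref{GEQ__24_} satisfied by $\vec y$ (Theorem \ref{th1})'' --- presupposes $l[y]=m[f]$, so at best it could yield \eqref{GEQ__30_} for solution pairs, which is strictly weaker than the statement (recovering the general case would need an additional argument, e.g.\ that solution pairs realize arbitrary jets, which you do not make). Worse, even for solutions the substitution does not behave as you describe. In the sign convention of \eqref{GEQ__261_}, \eqref{GEQ__46_}, \eqref{GEQ__54_} one has $W(t,l^*,m)F(t,l^*,m)=\tfrac{i}{2}\bigl((Q\vec y)'+S\vec y\,'\bigr)-H(t,l)\vec y$, while by Lemma \ref{lm1} the first summand of \eqref{GEQ__30_} equals $\Im\left(H(t,l)\vec y,\vec y\right)$; hence the $-\Im\left(H\vec y,\vec y\right)$ produced by your substitution cancels that first summand \emph{in its entirety} --- including the $-(\Im l)\{y,y\}$ you extracted and all cross terms, so your first ``observation'' contributes nothing. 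What survives is $\Im\bigl(\tfrac{i}{2}((Q\vec y)'+S\vec y\,'),\vec y\bigr)$, which for even $r$ is the total derivative $\tfrac12\tfrac{d}{dt}\bigl((\Re Q)\vec y,\vec y\bigr)$. Your route therefore reduces \eqref{GEQ__30_} to the pointwise Green identity $\tfrac12\tfrac{d}{dt}\bigl((\Re Q)\vec y,\vec y\bigr)=-(\Im l)\{y,y\}-\Im\left(m^*\{y,f\}\right)$, i.e.\ to the subintegral form of Theorem \ref{th3}; that identity involves derivatives of $y$ and $f$ up to order $r$ (through $\vec y\,'$), and verifying it is precisely the long computation \eqref{GEQ__37_}--\eqref{GEQ__45_} which the paper carries out later --- your ``component computation via \eqref{GEQ__10_}--\eqref{GEQ__12_}'' leaves it entirely undone.

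By contrast, the paper's proof of \eqref{GEQ__30_} never touches the differential equation. It expands \emph{both} summands by the same algebraic decomposition: $\vec y=Y(t,l)-\mathcal{F}(t,m)$ in \eqref{GEQ__33_}, and $\vec y=Y(t,l^*)+\bigl(Y(t,l)-Y(t,l^*)\bigr)-\mathcal{F}(t,m)$ in \eqref{GEQ__331_}, using \eqref{GEQ__17_}, \eqref{GEQ__29_}, \eqref{GEQ__251_}, \eqref{GEQ__7_} and the quasi-derivative relation $y^{[n]}(t|l)-y^{[n]}(t|l^*)=2iy^{[n]}(t|\Im l)$. Both expansions then carry the identical cross terms $2\Re\left(p_n^{*-1}y^{[n]}(t|\Im l),f^{[n]}(t|m)\right)+\Im\left(p_n^{-1}f^{[n]}(t|m),f^{[n]}(t|m)\right)$, which cancel when the two summands are subtracted. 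That purely algebraic matching of cross terms is the step your proposal is missing.
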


\begin{proof}
1. follows from \eqref{GEQ__8_}, \eqref{GEQ__16_},
\eqref{GEQ__23_}, \eqref{GEQ__28_}.

2. Let $r=s=2n$. For convenience when using notations of
\eqref{GEQ__23_} type we omit the argument $m$. For example by
$F\left(t,l^*\right)$ we denote $F\left(t,l^*,m\right)$.

a) We denote
\begin{equation} \label{GEQ__32_}
\mathcal{F}\left(t,m\right)=col\left\{0,...,0,f^{\left[2n-1\right]}
\left(t\left|m\right. \right),...,f^{\left[n\right]}
\left(t\left|m\right. \right)\right\}\in \mathcal{H}^{r}.
\end{equation}
One has
\begin{multline}\label{GEQ__33_}
\left(W\left(t,l,-\Im
l\right)\vec{y}\left(t,l,m,f\right),\vec{y}\left(t,l,m,f\right)\right)=\left(W\left(t,l,-\Im
l\right)Y\left(t,l\right),Y\left(t,l\right)\right)-\\
-\left(W\left(t,l,-\Im
l\right)Y\left(t,l\right),\mathcal{F}\left(t,m\right)\right)-\left(W\left(t,l,-\Im
l\right)\mathcal{F}\left(t,m\right),Y\left(t,l\right)\right)+\\
+\left(W\left(t,l,-\Im l\right)\mathcal{F}\left(t,m\right),\mathcal{F}\left(t,m\right)\right)=-\left(\Im l\right)\left[y,y\right]+\\
+2\Re \left(p_{r}^{*-1} y^{\left[n\right]} \left(t|\Im
l\right),f^{\left[n\right]} \left(t|m\right)\right)+\Im
\left(p_{r}^{-1} f^{\left[n\right]}
\left(t|m\right),f^{\left[n\right]} \left(t|m\right)\right).
\end{multline}
Here the last equality follows from \eqref{GEQ__17_},
\eqref{GEQ__29_}, \eqref{GEQ__251_}, \eqref{GEQ__7_}.
On the other hand we have
\begin{multline} \label{GEQ__331_}
\Im \left(W\left(t,l^{*}
,m^*\right)\vec{y}\left(t,l,m,f\right),F\left(t,l^{*}
\right)\right)=\Im \left(W\left(t,l^{*} ,m^*\right)Y\left(t,l^{*}
\right),F\left(t,l^{*} \right)\right)+\\
+\Im \left(W\left(t,l^{*}
,m^*\right)\left(\left(Y\left(t,l\right)-Y\left(t,l^{*}\right)\right)-
\mathcal{F}\left(t,m\right)\right),F\left(t,l^{*}
\right)\right)=\Im \left(m^*\left\{y,f\right\}\right)+\\
+2\Re \left(p_{r}^{*-1} y^{\left[n\right]} \left(t|\Im
l\right),f^{\left[n\right]} \left(t|m\right)\right)+\Im
\left(p_{n}^{-1} f^{\left[n\right]}
\left(t|m\right),f^{\left[n\right]} \left(t|m\right)\right).
\end{multline}
Here the last equality is proved similarly to \eqref{GEQ__33_}
taking into account that $y^{\left[n\right]}
\left(t|l\right)-y^{\left[n\right]} \left(t|l^{*}
\right)=2iy^{\left[n\right]} \left(t|\Im l\right)$. Comparing
\eqref{GEQ__33_}, \eqref{GEQ__331_} we obtain
\eqref{GEQ__30_}.

b) In view of \eqref{GEQ__25_}, \eqref{GEQ__29_},
\eqref{GEQ__17_} and Lemma \ref{lm4} we have
\begin{multline}\label{GEQ__34_}
\left(W\left(t,l,m\right)\vec{y}_1\left(t,l,m,f_{1} \right),F_{2}
\left(t,l\right)\right)=m\left\{y_{1} \left(t,l,m,f_{1}
\right),f_{2} \right\}-\\- \left(\mathcal{F}_{1}
\left(t,m\right),H\left(t,l^{*} \right)col\left\{0,\, ...,0,f_{2}
^{\left[n\right]} \left(t\left|m^{*} \right.
\right)\right\}\right)=\\
 =m\left\{y_{1} ,f_{2}
\right\}-\left(p_{n}^{-1} f_{1}^{\left[n\right]}
\left(t\left|m\right. \right),f_{2}^{\left[n\right]}
\left(t\left|m^{*} \right. \right)\right).
\end{multline}
Similarly
\begin{multline} \label{GEQ__35_}
\left(W\left(t,l^{*} ,m\right)F_1\left(t,l^{*} \right),\vec{y}_{2}
\left(t,l^{*} ,m^{*} ,f_{2} \right)\right) =m\left\{f_{1} ,y_{2}
\right\}-\left(p_{n}^{-1} f_{1}^{\left[n\right]}
\left(t\left|m\right. \right),f_{2}^{\left[n\right]}
\left(t\left|m^{*} \right. \right)\right)
\end{multline}

Comparing \eqref{GEQ__34_}, \eqref{GEQ__35_} we obtain
\eqref{GEQ__31_}.

For $r=2n+1,\, s=2n$ or $r=2n+1\vee 2n,\, s<2n$, the corresponding
terms in \eqref{GEQ__30_}, \eqref{GEQ__31_} coincide in
view of \eqref{GEQ__8_}, \eqref{GEQ__16_},
\eqref{GEQ__23_}, \eqref{GEQ__25_}, \eqref{GEQ__29_}.
For example in these cases
\begin{gather*}
\left(W\left(t,l,-\Im
l\right)\vec{y}\left(t,l,m,f\right),\vec{y}\left(t,l,m,f\right)\right)=\left(\left(W\left(t,l,-\Im
l\right)\right)Y\left(t,l\right),Y\left(t,l\right)\right)=-\left(\Im
l\right)\left\{y,y\right\} \end{gather*}

Theorem \ref{th2} is proved.\end{proof}

Let us notice that Theorem \ref{th2} remains valid if
null-components in $F_k(t,l,m)$, $F(t,l^*,m)$, $F_1(t,l^*,m)$ we
change by any $\mathcal{H}$-valued vector-functions.

\begin{theorem}[The Green formula]\label{th3} Let $\mathrm{l}_{k} ,\, \mathrm{m}_{k} \,
\left(k=1,2\right)$ are differential expressions of $l$
\eqref{GEQ__51+_}, $m$ type correspondingly. The orders of
$\mathrm{l}_{k} $ are equal to $r$, the orders $\mathrm{m}_{k} $
are different in general, even and are equal to $s_{k}\leq r$. Let
$y_{k} \left(t\right)\in C^{r} \left(\left[\alpha ,\, \beta
\right],\, \mathcal{H}\right)$, $f_{k} \left(t\right)\in C^{s_{k}
} \left(\left[\alpha ,\, \beta \right],\, \mathcal{H}\right)$, and
$\mathrm{l}_{k} \left[y_{k} \right]=\mathrm{m}_{k} \left[f_{k}
\right],\, \, k=1,\, 2$. Then
\begin{multline} \label{GEQ__36_}
\int _{\alpha }^{\beta }\mathrm{m}_{1} \left\{f_{1} ,y_{2}
\right\} dt-\int _{\alpha }^{\beta }\mathrm{m}_{2}^{*}
\left\{y_{1} ,f_{2} \right\}dt -\int _{\alpha }^{\beta
}\left(\mathrm{l}_{1} -\mathrm{l}_{2}^{*} \right)\left\{y_{1}
,y_{2} \right\}dt =
\\\left. =\left(\frac{i}{2} \left(Q\left(t,\mathrm{l}_{1}
\right)+Q_{}^{*} \left(t,\mathrm{l}_{2} \right)\right)\vec{y}_{1}
\left(t,\, \mathrm{l}_{1} ,\, \mathrm{m}_{1} ,\, f_{1} \right),\,
\vec{y}_{2} \left(t,\, \mathrm{l}_{2} ,\, \mathrm{m}_{2} ,f_{2}
\right)\right)\right|_\alpha^\beta,
\end{multline}
where $Q(t,\mathrm{l}_k)$,
$\vec{y}_k(t,\mathrm{l}_k,\mathrm{m}_k,f_k)$ correspond to
equations $\mathrm{l}_{k} \left[y\right]=\mathrm{m}_{k}
\left[f\right]$ by formulae \eqref{GEQ__6_},
\eqref{GEQ__14_}, \eqref{GEQ__25_} with
$\mathrm{l}_{k},\mathrm{m}_{k},y_{k},f_{k} $ instead of $l, m, y,
f$ correspondingly.
\end{theorem}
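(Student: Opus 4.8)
The plan is to derive the Green formula (\ref{GEQ__36_}) by reducing both equations $\mathrm{l}_k[y]=\mathrm{m}_k[f]$ to first-order systems via Theorem \ref{th1} and then integrating the bilinear-form identity of Theorem \ref{th2} over $[\alpha,\beta]$. The key observation is that the left-hand side of (\ref{GEQ__36_}) is precisely an integral of the kind of expressions appearing on the right-hand sides of the local identities (\ref{GEQ__30_}), (\ref{GEQ__31_}), while the right-hand side of (\ref{GEQ__36_}) is a boundary term. So the heart of the matter is to produce a \emph{pointwise} identity whose left side is $\tfrac{d}{dt}$ of the boundary expression and whose right side is the integrand on the left of (\ref{GEQ__36_}).

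First I would pass, by Theorem \ref{th1}, from $\mathrm{l}_k[y_k]=\mathrm{m}_k[f_k]$ to the first-order systems
\begin{equation*}
\tfrac{i}{2}\bigl((Q(t,\mathrm{l}_k)\vec{y}_k)'+S(t,\mathrm{l}_k)\vec{y}_k{}'\bigr)+H(t,\mathrm{l}_k)\vec{y}_k=W(t,\mathrm{l}_k^*,\mathrm{m}_k)F(t,\mathrm{l}_k^*,\mathrm{m}_k),\quad k=1,2,
\end{equation*}
where $\vec{y}_k=\vec{y}_k(t,\mathrm{l}_k,\mathrm{m}_k,f_k)$. Next I would form the scalar product of the first system ($k=1$) with $\vec{y}_2$, form the scalar product of $\vec{y}_1$ with the second system ($k=2$), and subtract, after taking complex conjugates appropriately so as to transfer the operator $Q(t,\mathrm{l}_2)$ onto $\vec{y}_1$ as $Q^*(t,\mathrm{l}_2)$. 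The terms $\tfrac{i}{2}((Q\vec{y}_1)',\vec{y}_2)$ and $\tfrac{i}{2}(\vec{y}_1,(Q\vec{y}_2)')$ combine, by the Leibniz rule, into a total derivative
\begin{equation*}
\tfrac{d}{dt}\Bigl(\tfrac{i}{2}\bigl((Q(t,\mathrm{l}_1)+Q^*(t,\mathrm{l}_2))\vec{y}_1,\vec{y}_2\bigr)\Bigr),
\end{equation*}
which is exactly the integrand of the boundary term in (\ref{GEQ__36_}); the analogous cancellation must occur for the $S$-terms, which requires tracking the structure of $S(t,\mathrm{l})$ from (\ref{GEQ__6_}), (\ref{GEQ__14_}).

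The remaining, non-derivative terms are $(H(t,\mathrm{l}_1)\vec{y}_1,\vec{y}_2)-(\vec{y}_1,H(t,\mathrm{l}_2)\vec{y}_2)$ together with the weight-side terms $(WF_1,\vec{y}_2)-(\vec{y}_1,WF_2)$; here I would invoke $H^*(t,\mathrm{l})=H(t,\mathrm{l}^*)$ and $W^*(t,\mathrm{l},\mathrm{m})=W(t,\mathrm{l},\mathrm{m}^*)$ from (\ref{GEQ__17_}), and then identify these combinations with $\int(\mathrm{l}_1-\mathrm{l}_2^*)\{y_1,y_2\}$, $\int\mathrm{m}_1\{f_1,y_2\}$, and $\int\mathrm{m}_2^*\{y_1,f_2\}$ by appealing to the bilinear-form relations (\ref{GEQ__29_})--(\ref{GEQ__31_}) of Theorem \ref{th2}. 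In effect this step rewrites the algebraic consequence of the two systems back in terms of the original Dirichlet forms. By Lemma \ref{lm3} it suffices to establish the identity for $f_k\in C^\infty$ and then pass to the limit in $C^{s_k}$, so smoothness causes no trouble.

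The main obstacle will be the bookkeeping when the orders $s_1,s_2$ of $\mathrm{m}_1,\mathrm{m}_2$ differ and are both strictly less than $r$, since the weight terms $W(t,\mathrm{l}_k^*,\mathrm{m}_k)F(t,\mathrm{l}_k^*,\mathrm{m}_k)$ then have different block structures (cf. footnote \ref{foot1} and Lemma \ref{lm4}), and the ``$r=s$'' discrepancy flagged after (\ref{GEQ__31_}) must be handled separately. I would first carry out the computation cleanly in the flagship case $r=s_1=s_2=2n$, using (\ref{GEQ__30_})--(\ref{GEQ__31_}) directly, and then reduce the cases $s_k<r$ to it by the device of Lemma \ref{lm2}: artificially raising the order of $\mathrm{m}_k$ to $r$ with zero coefficients leaves the quasi-derivatives and hence all the forms unchanged, so that the general formula follows from the already-proved equal-order case.
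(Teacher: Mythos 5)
Your proposal is correct and takes essentially the same route as the paper: the paper likewise reduces to the first-order systems of Theorem \ref{th1}, proves the system-level Green formula \eqref{GEQ__38_} by exactly the Leibniz-rule manipulation you describe (for $r=2n$ the $S$-terms drop out since $S=Q=Q^{*}$ by \eqref{GEQ__6_}, while for $r=2n+1$ they yield the extra $\mathrm{l}_{2n+1}$ contribution), and then converts the $H$-difference and weight terms back into Dirichlet forms, disposing of $s_k<r$ via the degenerate block structure of $W$ (footnote \ref{foot1}). One refinement to note: since Theorem \ref{th2} concerns a single pair $(l,m)$, the key identification $\left(\left(H(t,\mathrm{l}_1)-H(t,\mathrm{l}_2^{*})\right)\vec{y}_1,\vec{y}_2\right)=-\left(\mathrm{l}_1-\mathrm{l}_2^{*}\right)\left\{y_1,y_2\right\}$ for two \emph{different} expressions cannot literally be cited from \eqref{GEQ__30_}--\eqref{GEQ__31_}, but needs its own computation — the paper's Lemma \ref{lm5} — together with the explicit cancellation \eqref{GEQ__39_}--\eqref{GEQ__45_} of the $r=s$ correction terms whose non-coincidence you correctly flag.
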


\begin{proof}
We need the following
\begin{lemma}\label{lm5}
For sufficiently smooth vector-function $g_{1} \left(t\right),\,
g_{2} \left(t\right)$ one has
\begin{multline} \label{GEQ__37_}
\left(\left(H\left(t,\mathrm{l}_{1}
\right)-H\left(t,\mathrm{l}_{2}^{*} \right)\right)\vec{g}_{1}
\left(t,\mathrm{l}_{1},\mathrm{m}_1,0\right),\vec{g}_{2}
\left(t,\mathrm{l}_{2},\mathrm{m}_2
,0\right)\right)=\\=\begin{cases} -\left(\mathrm{l}_{1}
-\mathrm{l}_{2}^{*} \right)\left\{g_{1} ,g_{2} \right\},& r=2n \\
-\left(\mathrm{l}_{1} -\mathrm{l}_{2}^{*} \right)\left\{g_{1}
,g_{2} \right\}+\left(\mathrm{l}_{2n+1}^{1}
-\mathrm{l}_{2n+1}^{2^{*} } \right)\left\{g_{1} ,g_{2} \right\},&
r=2n+1,
\end{cases}
\end{multline}
where $\mathrm{l}_{2n+1}^{k} $ are the analogs of $l_{2n+1}$.
\end{lemma}

\begin{proof} Let $r=2n$. Then in view of
\eqref{GEQ__19_}-\eqref{GEQ__2311_}, \eqref{GEQ__25_},
\eqref{GEQ__9_}, \eqref{GEQ__17_} we have
\begin{multline*}
\left(\left(H\left(t,\mathrm{l}_{1}
\right)-H\left(t,\mathrm{l}_{2}^{*} \right)\right)\vec{g}_{1}
\left(t,\mathrm{l}_{1},\mathrm{m}_1 ,0\right),\vec{g}_{2}
\left(t,\mathrm{l}_{2},\mathrm{m}_2 ,0\right)\right)=\\
=\left(\left(A\left(t,\mathrm{l}_{1}
\right)-A\left(t,\mathrm{l}_{2}^* \right)\right)\vec{g}_{1}
\left(t,\mathrm{l}_{1},\mathrm{m}_1 ,0\right),\vec{g}_{2}
\left(t,\mathrm{l}_{2},\mathrm{m}_2 ,0\right)\right)+\\
+\left(C^{*} \left(t,\mathrm{l}_{2} \right)B\left(t,\mathrm{l}_{1}
\right)C\left(t,\mathrm{l}_{1} \right)col\left\{g_{1} ,g'_{1} ,\,
...,\, g_{1}^{\left(n-1\right)} ,g_{1}^{\left(2n-1\right)} ,\,
...,\,
g_{1}^{\left(n\right)} \right\}\right.,\\
\left. col\left\{g_{2} ,g'_{2} ,\, ...,\, g_{2}^{\left(n-1\right)}
,g_{2}^{\left(2n-1\right)} ,\, ...,\, g_{2}^{\left(n\right)}
\right\}\right)-\\
-\left(col\left\{g_{1} ,g'_{1} ,\ldots ,g_{1}^{\left(2n-1\right)}
,\ldots ,g_{1}^{\left(n\right)} \right\},C^{*}
\left(t,\mathrm{l}_{1} \right)B\left(t,\mathrm{l}_{2}
\right)C\left(t,\mathrm{l}_{2} \right)col\left\{g_{2} ,g'_{2}
,\ldots ,g_{2}^{\left(n-1\right)} ,g_{2}^{\left(2n-1\right)}
,\ldots ,g_{2}^{\left(n\right)} \right\}\right)=
\\=-\left(\mathrm{l}_{1} -\mathrm{l}_{2}^{*}
\right)\left\{f,g\right\}.
\end{multline*}

The proof of \eqref{GEQ__37_} for $r=2n+1$ follows directly
from \eqref{GEQ__15_}, \eqref{GEQ__25_}. Lemma \ref{lm5}
is proved.
\end{proof}

Now Green formula \eqref{GEQ__36_} is obtained from the
following Green formula for the equation \eqref{GEQ__24_} that
corresponds to equations
$\mathrm{l}_{k}\left[y\right]=\mathrm{m}_{k} \left[f\right]$
\begin{multline}\label{GEQ__38_}
\int _{\alpha }^{\beta }\left(W\left(t,\mathrm{l}_{1}^{*}
,\mathrm{m}_{1} \right)F_{1} \left(t,\mathrm{l}_{1}^{*}
,\mathrm{m}_{1} \right),\vec{y}_{2}
\left(t,\mathrm{l}_{2} ,\mathrm{m}_{2} ,f_{2} \right)\right)dt -\\
-\int _{\alpha }^{\beta }\left(W\left(t,\mathrm{l}_{2}^{*}
,\mathrm{m}_{2}^* \right)\vec{y}_{1} \left(t,\mathrm{l}_{1}
,\mathrm{m}_{1} ,f_{1} \right),F_{2} \left(t,\mathrm{l}_{2}^{*}
,\mathrm{m}_{2} \right)\right)dt+
\\
+\int _{\alpha }^{\beta }\left(\left(H\left(t,\mathrm{l}_{1}
\right)-H\left(t,\mathrm{l}_{2}^{*} \right)\right)\vec{y}_{1}
\left(t,\mathrm{l}_{1} ,\mathrm{m}_{1} ,f_{1} \right),\vec{y}_{2}
\left(t,\mathrm{l}_{2} ,\mathrm{m}_{2} f_{2}
\right)\right)dt -\\
-\int _{\alpha }^{\beta }\frac{i}{2}
\left\{\left((S\left(t,\mathrm{l}_{1} \right)-Q^{*}
\left(t,\mathrm{l}_{2} \right))\vec{y}\hspace{2px}'_{1}
\left(t,\mathrm{l}_{1} ,\mathrm{m}_{1} ,f_{1} \right),\vec{y}_{2}
\left(t,\mathrm{l}_{2} ,\mathrm{m}_{2} ,f_{2}
\right)\right)\right. - \\\left.
-\left(\left(Q\left(t,\mathrm{l}_{1} \right)-S^{*}
\left(t,\mathrm{l}_{2} \right)\right)\vec{y}_{1}
\left(t,\mathrm{l}_{1} ,\mathrm{m}_1,f_{1}
\right),\vec{y}\hspace{2px}'_{2} \left(t,\mathrm{l}_{2}
,\mathrm{m}_{2} ,f_{2} \right)\right)\right\}dt=\\\left.
=\left(\frac{i}{2} \left(Q\left(t,\mathrm{l}_{1} \right)+Q^{*}
\left(t,\mathrm{l}_{2} \right)\right)\vec{y}_{1}
\left(t,\mathrm{l}_{1} ,\mathrm{m}_1,f_{1} \right),\vec{y}_{2}
\left(t,\mathrm{l}_{2} ,\mathrm{m}_{2}, f_{2}
\right)\right)\right|_\alpha^\beta.
\end{multline}

Let $r=s_k=2n$. For convenience by
$F_k\left(t,\mathrm{l}_k^*\right),Y_k(t,\mathrm{l}_k)$ we denote
$F_k\left(t,\mathrm{l}_k^*,\mathrm{m}_k\right),Y_k(t,\mathrm{l}_k,\mathrm{m}_k)$
correspondingly. Then in view of \eqref{GEQ__7_},
\eqref{GEQ__25_}, \eqref{GEQ__251_}, \eqref{GEQ__29_},
\eqref{GEQ__37_} one has:
\begin{multline}
\label{GEQ__39_} \left(W\left(t,\mathrm{l}_{1}^{*}
,\mathrm{m}_{1} \right)F_{1} \left(t,\mathrm{l}_{1}^{*}
\right),\vec{y}_{2} \left(t,\mathrm{l}_{2} ,\mathrm{m}_{2} ,f_{2}
\right)\right)=\mathrm{m}_{1} \left\{f_{1} ,y_{2} \right\}+
\\
+\left(H\left(t,\mathrm{l}_{1} \right)col\left\{0,\,
...,0,f_{1}^{\left[n\right]} \left(t\left|\mathrm{m}_{1} \right.
\right)\right\},col\left\{0,\, ...,0,y_{2}^{\left[n\right]}
\left(t\left|\mathrm{l}_{2} \right. \right)-y_{2}^{\left[n\right]}
\left(t\left|\mathrm{l}_{1}^{*} \right.
\right)-f_{2}^{\left[n\right]} \left(t\left|\mathrm{m}_{2} \right.
\right)\right\}\right);
\end{multline}
\begin{multline}\label{GEQ__40_}
\left(W\left(t,\mathrm{l}_{2}^{*} ,\mathrm{m}_{2}^{*}
\right)\vec{y}_{1} \left(t,\mathrm{l}_{1} ,\mathrm{m}_{1} ,f_{1}
\right),F_{2} \left(t,\mathrm{l}_{2}^{*}
\right)\right)=\mathrm{m}_{2}^{*} \left\{y_{1} ,f_{2} \right\}+\\
+\left(H\left(t,\mathrm{l}_{2}^{*}
\right)col\left\{0,...,0,y_{1}^{\left[n\right]}
\left(t\left|\mathrm{l}_{1} \right. \right)-y_{1}^{\left[n\right]}
\left(t\left|\mathrm{l}_{2}^{*} \right.
\right)-f_{1}^{\left[n\right]} \left(t\left|\mathrm{m}_{1} \right.
\right)\right\},col\left\{0,...,0,f_{2}^{\left[n\right]}
\left(t\left|\mathrm{m}_{2} \right. \right)\right\}\right);
\end{multline}
\begin{multline}
\label{GEQ__41_} \left(\left(H\left(t,\mathrm{l}_{1}
\right)-H\left(t,\mathrm{l}_{2}^{*} \right)\right)\vec{y}_{1}
\left(t,\mathrm{l}_{1} ,\mathrm{m}_{1} ,f_{1} \right),\vec{y}_{2}
\left(t,\mathrm{l}_{2} ,\mathrm{m}_{2} ,f_{2}
\right)\right)=-\left(\mathrm{l}_{1} -\mathrm{l}_{2}^{*}
\right)\left\{y_{1} ,y_{2}
\right\}-\\
-\left(\left(H\left(t,\mathrm{l}_{1}\right)-H\left(t,\mathrm{l}_{2}^{*}
\right)\right)Y_{1} \left(t,\mathrm{l}_{1} \right),\mathcal{F}_{2}
\left(t,\mathrm{m}_{2}
\right)\right)-\left(\left(H\left(t,\mathrm{l}_{1}
\right)-H\left(t,\mathrm{l}_{2}^{*}
\right)\right)\mathcal{F}_1\left(t,\mathrm{m}_{1}
\right),Y_{2} \left(t,\mathrm{l}_{2} \right)\right)+\\
+\left(\left(H\left(t,\mathrm{l}_{1}
\right)-H\left(t,\mathrm{l}_{2}^{*}
\right)\right)col\left\{0,\ldots ,0,f_{1}^{\left[n\right]}
\left(t\left|\mathrm{m}_{1} \right.
\right)\right\},col\left\{0,\ldots ,0,f_{2}^{\left[n\right]}
\left(t\left|\mathrm{m}_{2} \right. \right)\right\}\right).
\end{multline}
where $\mathcal{F}_k(t,\mathrm{m}_k)$ are the analogs of
\eqref{GEQ__32_}.

Let us denote by $p_{j}^{k} ,q_{j}^{k} ,s_{j}^{k}$ the
coefficients of $\mathrm{l}_{j} $. Then in view of
\eqref{GEQ__7_}
\begin{multline}\label{GEQ__42_}
\left(H\left(t,\mathrm{l}_{1} \right)col\left\{0,\ldots
,0,f_{1}^{\left[n\right]} \left(t\left|\mathrm{m}_{1} \right.
\right)\right\},col\left\{0,\ldots ,0,y_{2}^{\left[n\right]}
\left(t\left|\mathrm{l}_{2} \right. \right)-y_{2}^{\left[n\right]}
\left(t\left|\mathrm{l}_{1}^{*} \right. \right)\right\}\right)=\\
=\left((p_n^1)^{-1} f_{1}^{\left[n\right]}
\left(t\left|\mathrm{m}_{1} \right. \right),y_{2}^{\left[n\right]}
\left(t\left|\mathrm{l}_{2} \right. \right)-y_{2}^{\left[n\right]}
\left(t\left|\mathrm{l}_{1}^{*} \right. \right)\right),
\end{multline}
and
\begin{multline}\label{GEQ__43_}
\left(col\left\{0,\ldots ,0,y_{1}^{\left[n\right]}
\left(t\left|\mathrm{l}_{1} \right. \right)-y_{1}^{\left[n\right]}
\left(t\left|\mathrm{l}_{2}^{*} \right.
\right)\right\},H\left(t,\mathrm{l}_{2} \right)col\left\{0,\ldots
,0,f_{2}^{\left[n\right]} \left(t\left|\mathrm{m}_{2} \right.
\right)\right\}\right)=\\
=\left(y_{1}^{\left[n\right]} \left(t\left|\mathrm{l}_{1} \right.
\right)-y_{1}^{\left[n\right]} \left(t\left|\mathrm{l}_{2}^{*}
\right. \right),(p_n^2)^{-1} f_{2}^{\left[n\right]}
\left(t\left|\mathrm{m}_{2} \right. \right)\right).
\end{multline}

On the another hand in view of \eqref{GEQ__7_},
\eqref{GEQ__11_} we have
\begin{multline} \label{GEQ__44_}
-\left(\left(H\left(t,\mathrm{l}_{1}
\right)-H\left(t,\mathrm{l}_{2}^{*} \right)\right)Y_{1}
\left(t,\mathrm{l}_{1} \right),\mathcal{F}_{2}
\left(t,\mathrm{m}_{2} \right)\right)=\\
=-\left(\left({i(p_{n}^1)^{-1} q_{n}^{1}
\mathord{\left/{\vphantom{ip_{n}^{1-1} q_{n}^{1}
2}}\right.\kern-\nulldelimiterspace} 2} -{i(p_n^{2*})^{-1}
s_{n}^{2*} \mathord{\left/{\vphantom{i(p_{n}^{2*})^{-1} s_{n}^{2*}
2}}\right.\kern-\nulldelimiterspace} 2}
\right)y_{1}^{\left(n-1\right)} +\left((p_{n}^1)^{-1}
-(p_{n}^{2*})^{-1} \right)y_{1}^{\left[n\right]}
\left(t\left|\mathrm{l}_{1} \right. \right),f_{2}^{\left[n\right]}
\left(t\left|\mathrm{m}_{2} \right.
\right)\right)=\\
=\left(\left(p_{n}^{2*}\right)^{-1} \left(y_{1}^{\left[n\right]}
\left(t\left|\mathrm{l}_{1} \right. \right)-y_{1}^{\left[n\right]}
\left(t\left|\mathrm{l}_{2}^{*} \right.
\right)\right),f_{2}^{\left[n\right]} \left(t\left|\mathrm{m}_{2}
\right. \right)\right),
\end{multline}
where the last equality is a corollary of \eqref{GEQ__11_} and
its following modification:
$$
(p_{n}^{1})^{-1} y_{1}^{\left[n\right]}
\left(t\left|\mathrm{l}_{1} \right. \right)=y_{1}^{\left(n\right)}
-\frac{i}{2} (p_{n}^{1})^{-1} q^1_{n} y_{1}^{\left(n-1\right)}$$

Analogously it can be proved that
\begin{multline} \label{GEQ__45_}
\left(\left(H\left(t,\mathrm{l}_{1}
\right)-H\left(t,\mathrm{l}_{2}^{*} \right)\right)\mathcal{F}_{1}
\left(t,\mathrm{m}_{1} \right),Y_{2} \left(t,\mathrm{l}_{2}
\right)\right)=\\
=\left(f_{1}^{\left[n\right]} \left(t\left|\mathrm{m}_{1} \right.
\right),(p_{n}^{1*})^{-1} \left(y_{2}^{\left[n\right]}
\left(t\left|\mathrm{l}_{2} \right. \right)-y_{2}^{\left[n\right]}
\left(t\left|\mathrm{l}_{1}^{*} \right. \right)\right)\right).
\end{multline}

Comparing \eqref{GEQ__38_}--\eqref{GEQ__45_} we get
\eqref{GEQ__36_} since the last $\int_\alpha^\beta$  in the
left-hand-side of \eqref{GEQ__38_} is equal to zero if $r=2n$
in view of \eqref{GEQ__6_}.

For $s<r=2n$ the proof of \eqref{GEQ__36_} easy follows from
\eqref{GEQ__23_}, \eqref{GEQ__25_}, \eqref{GEQ__29_},
\eqref{GEQ__37_}, \eqref{GEQ__38_} in view of footnote
\ref{foot1}.

Now let $r=2n+1$. Then the last $\int_\alpha^\beta$ in the
left-hand-side of \eqref{GEQ__38_} is equal to $\int _{\alpha
}^{\beta }\left(\mathrm{l}_{2n+1}^{1} -\mathrm{l}_{2n+1}^{2*}
\right)\left\{y_{1},y_2\right\} dt$. Hence the proof of
\eqref{GEQ__36_} for $s\le 2n<r=2n+1$ follows from
\eqref{GEQ__16_}, \eqref{GEQ__23_}, \eqref{GEQ__25_},
\eqref{GEQ__29_}, \eqref{GEQ__37_}, \eqref{GEQ__38_}.
Theorem \ref{th3} is proved.

\end{proof}

\begin{remark}\label{rem1} In view of Lemmas \ref{lm2}, \ref{lm3} all results of this item are valid if
the condition of parity of $s$ is changed by the condition $s\le
2\left[\frac{r}{2} \right]$.
\end{remark}

\section{Characteristic operator}

We consider an operator differential equation in separable Hilbert
space $\mathcal{H}_{1} $:
\begin{equation} \label{GEQ__46_}
\frac{i}{2} \left(\left(Q\left(t\right)x\left(t\right)\right)^{{'}
} +Q^{*} \left(t\right)x'\left(t\right)\right)-H_{\lambda }
\left(t\right)x\left(t\right)=W_{\lambda }
\left(t\right)F\left(t\right),\quad t\in \bar{\mathcal{I}},
\end{equation}
where $Q\left(t\right),\, \left[\Re \, Q\left(t\right)\right]^{-1}
,\, H_{\lambda } \left(t\right)\in B\left(\mathcal{H}_{1}
\right),\, Q\left(t\right)\in C^{1}
\left(\bar{\mathcal{I}},B\left(\mathcal{H}_{1} \right)\right)$;
the operator function $H_{\lambda } \left(t\right)$ is continuous
in $t$ and is Nevanlinna's in $\lambda $. Namely the following
condition holds:

(\textbf{A}) The set $\mathcal{A}\supseteq \mathbb{C}\setminus
\mathbb{R}^1$ exists, any its point have a neighbourhood
independent of $t\in \bar{\mathcal{I}}$, in this neighbourhood
$H_{\lambda } \left(t\right)$ is analytic $\forall t\in
\bar{\mathcal{I}};\, \forall \lambda \in \mathcal{A}\, H_{\lambda
} \left(t\right)=H^*_{\bar \lambda}(t)\in
C\left(\bar{\mathcal{I}},B\left(\mathcal{H}_1\right)\right)$; the
weight $W_{\lambda } \left(t\right)=\Im H_{\lambda }
\left(t\right)/\Im \lambda \ge 0\left(\Im  \lambda \ne 0\right)$.

In view of \cite{Khrab5} $\forall \mu \in \mathcal{A}\bigcap
\mathbb{R}^1:\, W_{\mu } \left(t\right)=\left.\partial H_{\lambda}
\left(t\right)/\partial \lambda\right|_{\lambda=\mu} $ is Bochner
locally integrable in the uniform operator topology.

For convenience we suppose that $0\in \bar{\mathcal{I}}$ and we
denote $\Re \, Q\left(0\right)=G$.

Let $X_{\lambda } \left(t\right)$ be the operator solution of
homogeneous equation \eqref{GEQ__46_} satisfying the initial
condition $X_{\lambda } \left(0\right)=I$, where $I$ is an
identity operator in $\mathcal{H}_{1} $. Since
$H_\lambda(t)=H^*_{\bar\lambda}(t)$ then
\begin{gather}
\label{GEQ__47+_} X^*_{\bar\lambda}(t)[\Re
Q(t)]X_\lambda(t)=G,\ \lambda\in \mathcal{A}.
\end{gather}

For any $\alpha ,\, \beta \in \bar{\mathcal{I}},\, \alpha \le
\beta $ we denote $\Delta _{\lambda } \left(\alpha ,\beta
\right)=\int _{\alpha }^{\beta }X_{\lambda }^{*}
\left(t\right)W_{\lambda } \left(t\right)X_{\lambda }
\left(t\right) dt$,\\ $N=\left\{h\in \mathcal{H}_{1} \left|h\in
Ker\Delta _{\lambda } \left(\alpha ,\beta \right)\right. \forall
\alpha ,\beta \right\},P$ is the ortho-projection onto $N^{\bot }
$. $N$ is independent of $\lambda \in \mathcal{A}$ \cite{Khrab5}.

For $x\left(t\right)\in \mathcal{H}_{1} $ or $x\left(t\right)\in
B\left(\mathcal{H}_{1} \right)$ we denote
$U\left[x\left(t\right)\right]=\left(\left[\Re \,
Q\left(t\right)\right]x\left(t\right),x\left(t\right)\right)$ or
$U\left[x\left(t\right)\right]=x^{*} \left(t\right)\left[\Re \,
Q\left(t\right)\right]x\left(t\right)$ respectively.

As in \cite{Khrab4,Khrab5} we introduce the following
\begin{definition}
An analytic operator-function $M\left(\lambda \right)=M^{*}
\left(\bar{\lambda }\right)\in B\left(\mathcal{H}_{1} \right)$ of
non-real $\lambda $ is called a characteristic operator of
equation \eqref{GEQ__46_} on $\mathcal{I}$,
if for $\Im  \lambda \ne 0$ and for any $\mathcal{H}_{1} $ -
valued vector-function $F\left(t\right)\in L_{W_{\lambda } }^{2}
\left(\mathcal{I}\right)$ with compact support the corresponding
solution $x_{\lambda } \left(t\right)$ of equation
\eqref{GEQ__46_} of the form
\begin{equation} \label{GEQ__47_}
x_{\lambda } \left(t,F\right)=\mathcal{R}_\lambda F=\int
_{\mathcal{I}}X_{\lambda } \left(t\right) \left\{M\left(\lambda
\right)-\frac{1}{2} sgn\left(s-t\right)\left(iG\right)^{-1}
\right\}X_{\bar{\lambda }}^{*} \left(s\right)W_{\lambda }
\left(s\right)F\left(s\right)ds
\end{equation}
satisfies the condition
\begin{gather}\label{GEQ__47++_}
\left(\Im  \lambda \right)\mathop{\lim }\limits_{\left(\alpha
,\beta \right)\uparrow \mathcal{I}} \left(U\left[x_{\lambda }
\left(\beta ,F\right)\right]-U\left[x_{\lambda } \left(\alpha
,F\right)\right]\right)\le 0\, \, \, \left(\Im  \lambda \ne
0\right).
\end{gather}
\end{definition}

Let us note that in \cite{Khrab5} characteristic operator  was defined if
$Q(t)=Q^*(t)$. Our case is equivalent to this one since equation
\eqref{GEQ__46_} coincides with equation of
\eqref{GEQ__46_} type with $\Re Q(t)$ instead of $Q(t)$ and
with $H_\lambda(t)-{1\over 2}\Im Q'(t)$ instead of $H_\lambda(t)$.

The properties of characteristic operator and sufficient conditions of the characteristic operators
existence are obtained in \cite{Khrab4,Khrab5}.

In the case $\mathrm{dim}\mathcal{H}_1<\infty$,
$Q(t)=\mathcal{J}=\mathcal{J}^*=\mathcal{J}^{-1}$, $-\infty<a=c$
the description of characteristic operators was obtained in \cite{Orlov} (the
results of \cite{Orlov} were specified and supplemented in
\cite{Khrab+}). In the case $\mathrm{dim} \mathcal{H}_1=\infty$
and $\mathcal{I}$ is finite the description of characteristic operators was obtained
in \cite{Khrab5}. These descriptions are obtained under the
condition that
\begin{gather}\label{star8}
\exists\lambda_0\in \mathcal{A},\ [\alpha,\beta]\subseteq
\overline{\mathcal{I}}:\ \Delta_{\lambda_0}(\alpha,\beta)\gg 0.
\end{gather}

\begin{definition}\label{def+}
\cite{Khrab4,Khrab5} Let  $M\left(\lambda \right)$ be the characteristic operator of
equation (\ref{GEQ__46_}) on  $\mathcal{I} $. We say that the
corresponding condition (\ref{GEQ__47++_}) is separated for
nonreal $\lambda =\mu _{0} $ if for any $\mathcal{H}_1$-valued
vector function $f\left(t\right)\in L_{W_{\mu_{0} }(t) }^{2}
\left(\mathcal{I} \right)$ with compact support the following
inequalities holds simultaneously for the solution $x_{\mu _{0} }
\left(t\right)$ (\ref{GEQ__47_}) of equation
(\ref{GEQ__46_}):
\begin{gather}
\label{12} \displaystyle \lim\limits_{\alpha\downarrow a}\Im\mu
_{0} U\left[x_{\mu _{0} } \left(\alpha\right)\right]\ge 0,\quad
\mathop{\lim }\limits_{\beta \uparrow b} \Im\mu _{0} U\left[x_{\mu
_{0} } \left(\beta \right)\right]\le 0.
\end{gather}
\end{definition}

\begin{theorem}\cite{Khrab4,Khrab5} (see also \cite{RBKholkin})\label{th++}
Let $P=I$, $M\left(\lambda \right)$ be the characteristic operator of equation
(\ref{GEQ__46_}), $\mathcal{P}(\lambda)=iM(\lambda)G+{1\over 2}I$, so that we have the following representation
\begin{gather}
\label{13} \displaystyle M\left(\lambda \right)=\left(\mathcal{P}
\left(\lambda \right)-\frac{1}{2} I\right)\left(iG\right)^{-1}.
\end{gather}

Then the condition (\ref{GEQ__47++_}) corresponding to
$M\left(\lambda \right)$ is separated for $\lambda =\mu _{0} $ if
and only if the operator $\mathcal{P} \left(\mu_{0} \right)$ is
the projection, i.e.
\begin{gather}
\label{14} \displaystyle \mathcal{P} \left(\mu _{0}
\right)=\mathcal{P} ^{2} \left(\mu _{0} \right).
\end{gather}
\end{theorem}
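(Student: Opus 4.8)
The plan is to compute the two one--sided boundary limits occurring in \eqref{12} explicitly and to reduce the separation requirement to an algebraic identity for $\mathcal P(\mu_0)$. Throughout write $\mathcal P=\mathcal P(\mu_0)$, fix a compactly supported $f$, and set
\[
g=\int_{\mathcal I}X^*_{\bar\mu_0}(s)W_{\mu_0}(s)f(s)\,ds,\qquad w=(iG)^{-1}g .
\]
First I would exploit the compact support: for $\beta$ to the right of $\mathrm{supp}\,f$ one has $\mathrm{sgn}(s-\beta)\equiv-1$ in \eqref{GEQ__47_}, so that, using \eqref{13},
\[
x_{\mu_0}(\beta)=X_{\mu_0}(\beta)\left(M(\mu_0)+\tfrac12(iG)^{-1}\right)g=X_{\mu_0}(\beta)\,\mathcal P w,
\]
and symmetrically $x_{\mu_0}(\alpha)=X_{\mu_0}(\alpha)(\mathcal P-I)w$ for $\alpha$ to the left of $\mathrm{supp}\,f$. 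Next I would record the ``same parameter'' transport identity: differentiating $X^*_{\mu_0}[\Re Q]X_{\mu_0}$ and using \eqref{GEQ__46_}, $H_\lambda=H^*_{\bar\lambda}$ and $W_{\mu_0}=\Im H_{\mu_0}/\Im\mu_0$ gives $\frac{d}{dt}\left(X^*_{\mu_0}[\Re Q]X_{\mu_0}\right)=2\,\Im\mu_0\,X^*_{\mu_0}W_{\mu_0}X_{\mu_0}$, hence
\[
X^*_{\mu_0}(t)[\Re Q(t)]X_{\mu_0}(t)=G+2\,\Im\mu_0\,\Delta_{\mu_0}(0,t),
\]
the counterpart of the conjugate relation \eqref{GEQ__47+_}.

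Using this I introduce the one--sided boundary forms
\[
\mathcal B_b(h)=\lim_{\beta\uparrow b}\Im\mu_0\,U[X_{\mu_0}(\beta)h],\qquad
\mathcal B_a(h)=\lim_{\alpha\downarrow a}\Im\mu_0\,U[X_{\mu_0}(\alpha)h],
\]
so that the limits in \eqref{12} are $L_+:=\mathcal B_b(\mathcal P w)$ and $L_-:=\mathcal B_a((\mathcal P-I)w)$, and separation for $\mu_0$ means $L_+\le0$ together with $L_-\ge0$. The transport identity yields the key structural relation
\[
\mathcal B_b(h)-\mathcal B_a(h)=2(\Im\mu_0)^2\bigl(\Delta_{\mu_0}(a,b)h,h\bigr)\ge0,
\]
and since $P=I$ this nonnegative form vanishes only at $h\in N=\{0\}$, i.e.\ it is positive definite. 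The characteristic operator condition \eqref{GEQ__47++_} reads $\Phi(w):=L_+-L_-\le0$; because $P=I$ the vectors $w$ run over a dense set, and this inequality bounds the nonnegative $\Delta_{\mu_0}$--parts of $L_\pm$ by their bounded $G$--parts, so the $L_\pm$ are finite and $\Phi$ extends to a bounded quadratic form with $\Phi\le0$ on all of $\mathcal H_1$; the same extension then applies to $\mathcal B_a$ and $\mathcal B_b$.

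The equivalence is then a short algebraic computation. For the ``if'' direction, assuming $\mathcal P^2=\mathcal P$ I substitute $\mathcal P w$ into $\Phi\le0$: since $\mathcal P\cdot\mathcal P=\mathcal P$ and $(\mathcal P-I)\mathcal P=0$ this gives $\mathcal B_b(\mathcal P w)\le0$, i.e.\ $L_+\le0$; substituting $(\mathcal P-I)w$ and using $\mathcal P(\mathcal P-I)=0$ and $(\mathcal P-I)^2=-(\mathcal P-I)$ gives $-\mathcal B_a((\mathcal P-I)w)\le0$, i.e.\ $L_-\ge0$, so \eqref{12} holds. For the ``only if'' direction, set $e=(\mathcal P^2-\mathcal P)w$; applying $L_-\ge0$ at the vector $\mathcal P w$ gives $\mathcal B_a(e)\ge0$, applying $L_+\le0$ at the vector $(\mathcal P-I)w$ gives $\mathcal B_b(e)\le0$, while the structural relation gives $\mathcal B_b(e)-\mathcal B_a(e)\ge0$. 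These squeeze $0\ge\mathcal B_b(e)\ge\mathcal B_a(e)\ge0$, forcing $\bigl(\Delta_{\mu_0}(a,b)e,e\bigr)=0$, hence $e\in N=\{0\}$; as $w$ is arbitrary, $\mathcal P^2=\mathcal P$.

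The hard part will not be this algebra but the functional--analytic scaffolding of the preceding paragraph: proving that each one--sided limit $L_\pm$ exists and is finite, and legitimizing the passage from ``for every compactly supported $f$'' to ``for every $w\in\mathcal H_1$''. Both hinge on the hypothesis $P=I$ (density of the vectors $w$ and positive definiteness of the $\Delta_{\mu_0}(a,b)$--form), together with the boundedness of the $\Delta_{\mu_0}$--parts extracted from \eqref{GEQ__47++_}; these are precisely the points at which the theory developed in \cite{Khrab5} would be invoked. I would also verify that the monotonicity of $U[X_{\mu_0}(\cdot)h]$ furnished by the transport identity guarantees the one--sided limits exist in $(-\infty,+\infty]$ before their finiteness is established.
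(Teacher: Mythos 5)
The paper contains no proof of Theorem \ref{th++}: it is quoted from \cite{Khrab4,Khrab5} (see also \cite{RBKholkin}), so there is no internal argument to compare yours against, and I can only judge your proposal on its merits. It is essentially sound, and its three pillars all check out. First, the boundary-value computation is exactly right: for $\beta$ to the right (resp.\ $\alpha$ to the left) of $\mathrm{supp}\,f$, formula \eqref{GEQ__47_} combined with \eqref{13} gives $x_{\mu_0}(\beta)=X_{\mu_0}(\beta)\mathcal{P}w$ and $x_{\mu_0}(\alpha)=X_{\mu_0}(\alpha)(\mathcal{P}-I)w$ with $w=(iG)^{-1}g$, which is precisely why idempotency of $\mathcal{P}(\mu_0)$ is the relevant algebraic condition. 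Second, your same-parameter transport identity is correct: writing the homogeneous equation as $i(\Re Q)x'=H_{\mu_0}x-\tfrac{i}{2}Q'x$ one computes $\tfrac{d}{dt}\,U[X_{\mu_0}(t)h]=2\,\Im\mu_0\,(W_{\mu_0}X_{\mu_0}h,X_{\mu_0}h)$, which yields the monotonicity of $\Im\mu_0\,U[X_{\mu_0}(\cdot)h]$, the existence of the one-sided limits, and the structural relation $\mathcal{B}_b(h)-\mathcal{B}_a(h)=2(\Im\mu_0)^2\bigl(\Delta_{\mu_0}(a,b)h,h\bigr)$. Third, the concluding algebra in both directions — substituting $\mathcal{P}w$ and $(\mathcal{P}-I)w$, and squeezing $e=(\mathcal{P}^2-\mathcal{P})w$ between the three inequalities to force $\bigl(\Delta_{\mu_0}(a,b)e,e\bigr)=0$ and hence $e\in N=\{0\}$ by $P=I$ — is complete and correct.

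The two scaffolding claims you defer are genuine proof obligations, but both can be discharged with tools already at hand, so they are fleshing-out rather than gaps. For the density of the $w$'s: the vectors $g$ include every $\Delta_{\bar\mu_0}(\alpha,\beta)h$ (take $F=\chi_{[\alpha,\beta]}X_{\bar\mu_0}h$), and since each $\Delta_{\bar\mu_0}(\alpha,\beta)\ge 0$ is self-adjoint, the closed span of their ranges equals $\bigl(\bigcap_{\alpha,\beta}\mathrm{Ker}\,\Delta_{\bar\mu_0}(\alpha,\beta)\bigr)^{\perp}=N^{\perp}=\mathcal{H}_1$ (using the $\lambda$-independence of $N$ quoted in the paper); composing with the boundedly invertible $(iG)^{-1}$ preserves density. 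For passing the \emph{inequalities} (not merely the finiteness bounds) from the dense set to all of $\mathcal{H}_1$: you need that $\mathcal{B}_b(\mathcal{P}\,\cdot)$ is lower semicontinuous and $\mathcal{B}_a\bigl((\mathcal{P}-I)\,\cdot\bigr)$ upper semicontinuous, which follows from their structure as a continuous $G$-term plus, respectively minus, a monotone supremum of continuous nonnegative forms $\bigl(\Delta_{\mu_0}(\alpha,\beta)\,\cdot\,,\cdot\bigr)$; the directions of semicontinuity are exactly those that preserve $\mathcal{B}_b\le 0$ and $\mathcal{B}_a\ge 0$ under limits, and this is what legitimizes evaluating the extended inequalities at $\mathcal{P}w$ and $(\mathcal{P}-I)w$, which need not themselves arise from compactly supported $f$. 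One cosmetic slip: as $\alpha\downarrow a$ the monotone quantity decreases, so $\mathcal{B}_a$ takes values in $[-\infty,+\infty)$ rather than $(-\infty,+\infty]$; nothing in your argument depends on this.
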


\begin{definition}\cite{Khrab4,Khrab5}\label{def++}
If the operator-function $M\left(\lambda \right)$ of the form
(\ref{13}) is the characteristic operator of equation (\ref{GEQ__46_}) on
$\mathcal{I} $ and, moreover, $\mathcal{P} \left(\lambda
\right)=\mathcal{P}^{2} \left(\lambda \right)$, then
$\mathcal{P}\left(\lambda \right)$ is called a characteristic
projection of equation (\ref{GEQ__46_}) on $\mathcal{I}$.
\end{definition}

The properties of characteristic projections and sufficient conditions for their
existence are obtained in \cite{Khrab5}. Also \cite{Khrab5}
contains the description of characteristic projections and abstract an analogue of
Theorem \ref{th++}.

The following statement gives necessary and sufficient conditions
for existence of characteristic operator, which corresponds to such separated
boundary conditions that corresponding boundary condition in
regular point is self-adjoint. This statement follows from Theorem
\ref{th++}.

Let us denote $\mathcal{H}_+$ ($\mathcal{H}_-$) the invariant
subspace of operator $G$, which corresponds to positive (negative)
part of $\sigma(G)$.

\begin{theorem}\label{th+}
Let $-\infty <a$. If $P=I$ then for existence of characteristic operator $M(\lambda)$
of equation \eqref{GEQ__46_} on $(a,b)$ such that
\begin{gather}\label{star5}
\exists\mu_0 \in \mathbb{C}\setminus \mathbb{R}^1:\
U[x_{\mu_0}(a,F)]=U[x_{\bar{\mu_0}}(a,F)]=0
\end{gather}
(and therefore condition \eqref{GEQ__47++_} is separated on
$\lambda=\mu_0$, $\lambda=\bar{\mu}_0$) it is necessary that
\begin{gather}\label{star6}
\mathrm{dim} \mathcal{H}_+=\mathrm{dim} \mathcal{H}_-
\end{gather}
(in \eqref{star5} $x_\lambda(t,F)$ is a solution
\eqref{GEQ__47_} of \eqref{GEQ__46_} which corresponds to
characteristic operator $M(\lambda)$, $L^2_{w_{\mu_0}(t)}(a,b)\ni F=F(t)$ is any
$\mathcal{H}_1$-valued vector-function with compact support). If
condition \eqref{star8} holds  then condition \eqref{star6} is
also sufficient for the existence of such characteristic operator.
\end{theorem}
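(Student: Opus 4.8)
The substantial direction is necessity; sufficiency will follow by prescribing boundary conditions and invoking the description of \cite{Khrab5}. So assume an $M(\lambda)$ satisfying \eqref{star5} exists. Since $U[x_{\mu_0}(a,F)]=0$, the $a$-contribution to the characteristic-operator inequality \eqref{GEQ__47++_} vanishes, so \eqref{GEQ__47++_} reduces exactly to the separated condition \eqref{12} at $\mu_0$, and likewise at $\bar\mu_0$; by Theorem \ref{th++} both $\mathcal P(\mu_0)$ and $\mathcal P(\bar\mu_0)$ are projections. Evaluating \eqref{GEQ__47_} at $t=a$ (there $\mathrm{sgn}(s-a)\equiv 1$, so the bracket is $(\mathcal P(\mu_0)-I)(iG)^{-1}$) gives $x_{\mu_0}(a,F)=X_{\mu_0}(a)(\mathcal P(\mu_0)-I)(iG)^{-1}\xi$ with $\xi=\int_{\mathcal I}X^*_{\bar\mu_0}W_{\mu_0}F\,ds$; because $P=I$ these $\xi$ are dense in $\mathcal H_1$, so the boundary values densely fill $V_{\mu_0}:=X_{\mu_0}(a)\ker\mathcal P(\mu_0)$. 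By \eqref{star5} the Hermitian form $([\Re Q(a)]\cdot,\cdot)$ vanishes on $V_{\mu_0}$, hence (polarization) $V_{\mu_0}$ is neutral, so the orthogonal projections onto the positive and negative spectral subspaces $\mathcal H_\pm(a)$ of the invertible self-adjoint operator $\Re Q(a)$ are each injective on $V_{\mu_0}$. Since $t\mapsto\Re Q(t)$ is a norm-continuous path of invertible self-adjoint operators, the ranks of its spectral projections are constant, so $\dim\mathcal H_\pm(a)=\dim\mathcal H_\pm$ and therefore $\dim V_{\mu_0}\le\min(\dim\mathcal H_+,\dim\mathcal H_-)$.

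To pin $\dim V_{\mu_0}$ from below I would derive the Lagrange identity $\frac{d}{dt}U[x]=2\,\Im\lambda\,(W_\lambda x,x)+2\,\Im(W_\lambda F,x)$ for solutions of \eqref{GEQ__46_}, integrating $\frac{d}{dt}([\Re Q]x,x)$ by parts and using $\Im H_\lambda=\Im\lambda\,W_\lambda$. Applied to the homogeneous solution $x=X_{\mu_0}(t)\zeta$ with $\zeta\in\ker\mathcal P(\mu_0)$ on $[a,0]$, with $X_{\mu_0}(0)=I$ and $U[X_{\mu_0}(a)\zeta]=0$, it gives $(G\zeta,\zeta)=2\,\Im\mu_0\int_a^0(W_{\mu_0}X_{\mu_0}\zeta,X_{\mu_0}\zeta)\,dt$, whose sign equals that of $\Im\mu_0$; taking $\Im\mu_0>0$, $\ker\mathcal P(\mu_0)$ is $G$-nonnegative and $\ker\mathcal P(\bar\mu_0)$ is $G$-nonpositive. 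From $M(\lambda)=M^*(\bar\lambda)$, $G=G^*$ and $\mathcal P(\lambda)=iM(\lambda)G+\frac12 I$ a direct computation yields $G^{-1}\mathcal P(\mu_0)^*G=I-\mathcal P(\bar\mu_0)$, i.e. $\ker\mathcal P(\mu_0)$ and $\ker\mathcal P(\bar\mu_0)$ are each other's $G$-orthogonal complements; by the standard criterion for maximal nonnegative subspaces of a Krein space, $\ker\mathcal P(\mu_0)$ is then maximal $G$-nonnegative, so the projection onto $\mathcal H_+$ maps it isomorphically and $\dim V_{\mu_0}=\dim\ker\mathcal P(\mu_0)=\dim\mathcal H_+$. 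Combined with $\dim V_{\mu_0}\le\dim\mathcal H_-$ this gives an injection $\mathcal H_+\hookrightarrow\mathcal H_-$; the symmetric argument at $\bar\mu_0$ gives $\mathcal H_-\hookrightarrow\mathcal H_+$, and the Schroeder--Bernstein theorem yields \eqref{star6}.

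For sufficiency, assume \eqref{star6} and \eqref{star8}. Then $\Re Q(a)$ has balanced inertia, so $([\Re Q(a)]\cdot,\cdot)$ admits a maximal neutral (Lagrangian) subspace $\mathcal L\subset\mathcal H_1$. I would impose at the finite endpoint $a$ the $\lambda$-independent self-adjoint boundary condition $x(a)\in\mathcal L$, together with a fixed separating self-adjoint (dissipative) condition at $b$. Under \eqref{star8} the description of characteristic operators from \cite{Khrab5} applies and produces an $M(\lambda)$ realizing these conditions; by Theorem \ref{th++} separation makes $\mathcal P(\mu_0)$ a projection, while neutrality of $\mathcal L$ forces $U[x_{\mu_0}(a,F)]=U[x_{\bar\mu_0}(a,F)]=0$, which is \eqref{star5}.

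The step I expect to be the main obstacle is the bookkeeping that welds the two endpoints' information together in the necessity part: neutrality of $V_{\mu_0}$ is a statement at $t=a$ about $\Re Q(a)$, whereas the $G$-semidefiniteness of $\ker\mathcal P(\mu_0)$ lives at $t=0$, and it is the adjoint identity $G^{-1}\mathcal P(\mu_0)^*G=I-\mathcal P(\bar\mu_0)$ that upgrades semidefiniteness to maximality and lets the upper and lower dimension bounds close. Carrying the final count through the explicit isomorphisms $\mathcal H_+\cong V_{\mu_0}$ and the injections $V_{\mu_0}\hookrightarrow\mathcal H_\mp(a)$, rather than subtracting possibly infinite dimensions, is what keeps the argument valid when $\dim\mathcal H_1=\infty$.
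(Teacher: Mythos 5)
Your argument is correct, but it reaches \eqref{star6} by a genuinely different route than the paper. For necessity the paper works entirely at the endpoint $t=a$: citing \cite{Khrab5} (the proof of n$^\circ 2^\circ$ of Theorem 1.1, Theorem 2.4, formula (1.69), Remark 3.2) it shows that $X_{\mu_0}(a)(I-\mathcal{P}(\mu_0))\mathcal{H}_1$ and $X_{\bar\mu_0}(a)(I-\mathcal{P}(\bar\mu_0))\mathcal{H}_1$ are maximal $\Re Q(a)$-nonnegative resp.\ nonpositive, $\Re Q(a)$-neutral and mutually $\Re Q(a)$-orthogonal, concludes from \cite{Azizov} that the first is \emph{hypermaximal} neutral, hence $\dim\mathcal{H}_+(a)=\dim\mathcal{H}_-(a)$, and then transports this to $G=\Re Q(0)$ via \eqref{plus2}, \eqref{GEQ__47+_}, maximal uniformly $G$-definite subspaces $X_{\mu_0}^{-1}(a)\mathcal{H}_+(a)$, $X_{\bar\mu_0}^{-1}(a)\mathcal{H}_-(a)$ and the law of inertia. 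You instead split the information between the two points: neutrality of $V_{\mu_0}=X_{\mu_0}(a)\ker\mathcal{P}(\mu_0)$ lives at $a$ and only gives the injections $V_{\mu_0}\hookrightarrow\mathcal{H}_\pm(a)$, while semidefiniteness and \emph{maximality} are established at $t=0$ for $\ker\mathcal{P}(\mu_0)$, $\ker\mathcal{P}(\bar\mu_0)$ themselves: semidefiniteness from the Lagrange identity integrated over $[a,0]$, maximality from the purely algebraic identity $G^{-1}\mathcal{P}^*(\mu_0)G=I-\mathcal{P}(\bar\mu_0)$ (an elementary consequence of $M(\lambda)=M^*(\bar\lambda)$ and \eqref{13}) together with the standard Krein-space criterion that a closed nonnegative subspace whose $G$-orthogonal companion is nonpositive is maximal nonnegative. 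You then close the count with the angular-operator isomorphism $\ker\mathcal{P}(\mu_0)\cong\mathcal{H}_+$, constancy of the inertia of the norm-continuous invertible family $\Re Q(t)$ (replacing the paper's inertia-law transfer), and Schr\"oder--Bernstein. Your version is more self-contained — it needs neither the hypermaximal-neutrality machinery of \cite{Azizov} nor the cited structural results of \cite{Khrab5}, essentially only Theorem \ref{th++}, the Lagrange identity and textbook Krein-space geometry — and your cardinal bookkeeping is explicitly safe in infinite dimensions; the paper's version is shorter because the quoted results of \cite{Khrab5} deliver maximal semidefiniteness of the boundary subspaces at $a$ in one stroke. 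For sufficiency both proofs are the same in substance: you prescribe a hypermaximal $\Re Q(a)$-neutral condition at $a$ plus a separated condition at $b$ and appeal, under \eqref{star8}, to the existence/description results of \cite{Khrab5}, which is exactly the paper's citation of Theorem 4.4 there; your sketch is acceptable but, like the paper's, carries the full weight of that external theorem (in particular for $b=\infty$, where the condition at $b$ must be produced by the Weyl-type machinery rather than imposed pointwise).
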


\begin{proof}
Necessity. Since $P=I$ we obtain
\begin{gather}\label{plus1}
U[X_{\mu_0}(a)(I-\mathcal{P}(\mu_0))]=
U[X_{\bar{\mu}_0}(a)(I-\mathcal{P}(\bar{\mu}_0))]=0\end{gather} in
view of the proof of n$^\circ 2^\circ$ of Theorem 1.1 from
\cite{Khrab5}.

Let for definiteness $\Im\mu_0>0$. Then in view of Theorem 2.4 and
formula (1.69) from \cite{Khrab5}, \eqref{14}, \eqref{plus1} and
the fact that
\begin{gather}\label{plus2}
\Im\lambda(X^*_\lambda(a)\Re Q(a)X_\lambda(a)-G)\leq 0,\
\lambda\in\mathcal{A}
\end{gather}
we conclude that $X_{\mu_0}(a)(I-\mathcal{P}(\mu_0))\mathcal{H}_1$
and $X_{\bar{\mu}_0}(a)(I-\mathcal{P}(\bar{\mu}_0))\mathcal{H}_1$
are correspondingly maximal $\Re Q(a)$-nonnegative and maximal
$\Re Q(a)$-nonpositive subspaces which are $\Re Q(a)$-neutral and
which are $\Re Q(a)$-orthogonal in view of Remark 3.2 from
\cite{Khrab5}, Theorem \ref{th++} and \eqref{GEQ__47+_}. Hence
$$
\left(X_{\mu_0}(a)(I-\mathcal{P}(\mu_0))\mathcal{H}_1\right)^{[\perp]}=
X_{\bar{\mu}_0}(a)(I-\mathcal{P}(\bar{\mu}_0))\mathcal{H}_1
$$
in view of \cite[p.73]{Azizov} (here by $[\perp]$ we denote $\Re
Q(a)$-orthogonal complement). Therefore
$X_{\mu_0}(a)(I-\mathcal{P}(\mu_0))\mathcal{H}_1$ is hypermaximal
$\Re Q(a)$-neutral subspace in view of \cite[p.43]{Azizov}. Thus
we obtain that in view of \cite[p.42]{Azizov} that $\mathrm{dim }
\mathcal{H}_+(a)=\mathrm{dim} \mathcal{H}_-(a)$, where
$\mathcal{H}_\pm(a)$ are analogs of $\mathcal{H}_\pm$ for $\Re
Q(a)$. In view of \eqref{plus2}
$X_{\mu_0}^{-1}(a)\mathcal{H}_+(a)$ and
$X_{\bar\mu_0}^{-1}(a)\mathcal{H}_-(a)$ are correspondingly
maximal uniformly $G$-positive and maximal uniformly $G$-negative
subspaces. Therefore $\mathcal{H}_1$ is equal to the direct and
$G$-orthogonal sum of these subspaces in view of
\eqref{GEQ__47+_} and \cite[p.75]{Azizov}. Hence we obtain
\eqref{star6} in view of the law of inertia \cite[p.54]{Azizov}.

Sufficiency follows from Theorem 4.4. from \cite{Khrab5}. Theorem
is proved.
\end{proof}

It is obvious that in Theorem \ref{th+} the point $a$ can be
replaced by the point $b$ if $b<\infty$, but cannot be replaced by
the point $b$ if $b=\infty$ as the example of operator $id/dt$ on
the semi-axis shows. Also this example shows that condition
\eqref{star5} is not necessary for the fulfilment of the condition
$U[x_{\mu_0}(a,F)]=0$ only.

In the case of self-adjoint boundary conditions the analogue of
Theorem \ref{th+} for regular differential operators in space of
vector-functions was proved in \cite{RB} (see also
\cite{RBKholkin}). For finite canonical systems depending on
spectral parameter in a linear manner such analogue was proved in
\cite{Mogil}. These analogs were obtained in a different way
comparing with Theorem \ref{th+}.

{

Let $\mathcal{H}_{1} =\mathcal{H}^{2n}$, $Q(t)=J/i$ \eqref{GEQ__6_},  $a=c$ and condition \eqref{star8} hold. Let condition \eqref{GEQ__47++_} be separated and $\mathcal{P}(\lambda)$ be a corresponding characteristic projection. In view of \cite[p. 469]{Khrab5} the Nevanlinna pair $\left\{-a\left(\lambda \right),\, b\left(\lambda \right)\right\},\, a,b\in B\left(\mathcal{H}^{n} \right)$ (see for example \cite{DHMdS2}) and Weyl function $m\left(\lambda \right)\in B\left(\mathcal{H}^{n} \right)$ of equation \eqref{GEQ__46_} on $\left(c,\, b\right)$ \cite{Khrab5} exist such that
\begin{gather} \label{GEQ__64ad1_} 
\mathcal{P}\left(\lambda \right)=\left(\begin{array}{c} {I_{n} } \\ {m\left(\lambda \right)} \end{array}\right)\left(b^{*} \left(\bar{\lambda }\right)-a^{*} \left(\bar{\lambda }\right)m\left(\lambda \right)\right)^{-1} \left(a_{2}^{*} \left(\bar{\lambda }\right),\, -a_{1}^{*} \left(\bar{\lambda }\right)\right),  
\\ \label{GEQ__65ad1_} 
I-\mathcal{P}\left(\lambda \right)=\left(\begin{array}{c} {a\left(\lambda \right)} \\ {b\left(\lambda \right)} \end{array}\right)\left(b\left(\lambda \right)-m\left(\lambda \right)a\left(\lambda \right)\right)^{-1} \left(-m\left(\lambda \right),I_{n} \right),\\\notag  
\left(b^{*} \left(\bar{\lambda }\right)-a^{*} \left(\bar{\lambda }\right)m\left(\lambda \right)\right)^{-1} ,\, \, \left(b\left(\lambda \right)-m\left(\lambda \right)a\left(\lambda \right)\right)^{-1} \in B\left(\mathcal{H}^{n} \right).
\end{gather}

Conversely $\mathcal{P}\left(\lambda \right)$ \eqref{GEQ__64ad1_} is a characteristic projection for any Nevanlinna pair $\left(-a\left(\lambda \right),\, b\left(\lambda \right)\right)$ and any Weyl function $m\left(\lambda \right)$ of equation \eqref{GEQ__46_} on $\left(c,b\right)$.

Let domain ${D}\subseteq {\mathbb C}_{+} $ be such that $\forall \lambda \in {D}:\, \, 0\in \rho \left(a\left(\lambda \right)-ib(\lambda)\right)$ (for example ${D}={\mathbb C}_{+} $ if $\exists \lambda_{\pm}\in\mathbb{C}_\pm$ such that $a^{*} \left(\lambda_\pm \right)b\left(\lambda_\pm \right)=b^{*} \left(\lambda_\pm \right)a\left(\lambda_\pm \right)$). Let domain ${ D}_{1} $ be symmetric to ${D}$ with respect to real axis. Then in view of \eqref{GEQ__64ad1_}, \eqref{GEQ__65ad1_} and Corrolary 3.1 from \cite{Khrab5} operator $\mathcal{R}_{\lambda } F$ \eqref{GEQ__47_} for $\lambda \, \in { D}\bigcup { D}_{1} $ can be represented in the following form with using the operator solulion $U_{\lambda } \left(t\right)\in B\left(\mathcal{H}^{n} ,\, \mathcal{H}^{2n} \right)$ of equation \eqref{GEQ__46_}, ($F=0$) satisfying accumulative (or dissipative) initial condition and operator solution $V_{\lambda } \left(t\right)\in B\left(\mathcal{H}^{n} ,\, \mathcal{H}^{2n} \right)$ of Weyl type of the same equation.

\begin{remark}\label{rm21}Let $\lambda \in {D}\bigcup {D}_{1} $. Then
$$\mathcal{R}_{\lambda } F=\int _{a}^{t}V_{\lambda } \left(t\right)U_{\bar{\lambda }}^{*} \left(s\right)W_{\lambda } \left(s\right)F\left(s\right)ds +\int _{t}^{b}U_{\lambda } \left(t\right)V_{\bar{\lambda }}^{*} \left(s\right)W_{\lambda } \left(s\right)F\left(s\right)ds.$$
Here
\begin{equation} \label{GEQ__66ad1_} 
U_{\lambda } \left(t\right)=X_{\lambda } \left(t\right)\left(\begin{array}{c} {a\left(\lambda \right)} \\ {b\left(\lambda \right)} \end{array}\right),\, \, V_{\lambda } \left(t\right)=X_{\lambda } \left(t\right)\left(\begin{array}{c} {b\left(\lambda \right)} \\ -{a\left(\lambda \right)} \end{array}\right){K} ^{-1} \left(\lambda \right)+U_{\lambda } \left(t\right)m_{a,b} \left(\lambda \right),  
\end{equation} 
where
\begin{gather} \label{GEQ__67ad1_} 
{K} \left(\lambda \right)=a^{*} \left(\bar{\lambda }\right)a\left(\lambda \right)+b^{*} \left(\bar{\lambda }\right)b\left(\lambda \right),\, \, {K} ^{-1} \left(\lambda \right)\in B\left(\mathcal{H}^{n} \right),  
\\\label{GEQ__68ad1_} 
m_{a,b} \left(\lambda \right)=m_{a,b}^{*} \left(\bar{\lambda }\right)={K} ^{-1} \left(\lambda \right)\left(a^{*} \left(\bar{\lambda }\right)+b^{*} \left(\bar{\lambda }\right)m\left(\lambda \right)\right)\left(b^{*} \left(\bar{\lambda }\right)-a^{*} \left(\bar{\lambda }\right)m\left(\lambda \right)\right)^{-1} ,  
\\
V_{\lambda } \left(t\right)h\in L_{W_{\lambda } \left(t\right)}^{2} \left(c,b\right)\forall h\in \mathcal{H}^{n}.
\end{gather} 
Moreover if $\exists\lambda_0\in \mathbb{C}\setminus\mathbb{R}^1$ such that $a\left(\lambda_0 \right)=a\left(\bar{\lambda }_0\right),\, b\left(\lambda_0 \right)=b\left(\bar{\lambda }_0\right)$ then we can set $D=\mathbb{C}_+$ and
$$\int_{\mathcal{I}}V_{\lambda }^{*} \left(t\right)W_{\lambda } \left(t\right)V\left(t\right)dt \le \frac{\Im m_{a,b} \left(\lambda \right)}{\Im\lambda } \, \, \left(\Im\lambda \ne 0\right).$$

\end{remark}

For the construction of solutions of Weyl type and descriptions of Weyl function in various situation see  \cite{0,Khrab5} and references in \cite{0}.

}


Let us consider operator differential expression $l_{\lambda } $
of \eqref{GEQ__51+_} type with coefficients $p_{j} =p_{j}
\left(t,\lambda \right)$, $ q_{j} =q_{j} \left(t,\lambda
\right),\, \, s_{j} =s_{j} \left(t,\lambda \right)$ and of order
$r$. Let $-l_{\lambda } $ depends on $\lambda $ in Nevanlinna
manner. Namely, from now on the following condition holds:

(\textbf{B}) The set $\mathcal{B}\supseteq {\mathbb{C}\setminus
\mathbb{R}}^1 $ exists, any its points have a neighbourhood
independent on $t\in \bar{\mathcal{I}}$, in this neighbourhood
coefficients $p_{j} =p_{j} \left(t,\lambda \right),\, \, q_{j}
=q_{j} \left(t,\lambda \right),\, \, s=s_{j} \left(t,\lambda
\right)$ of the expression $l_{\lambda } $ are analytic $\forall
t\in \bar{\mathcal{I}}$; $\forall \lambda \in \mathcal{B}{\rm ,}\,
\, p_{j} \left(t,\lambda \right)$, $q_{j} \left(t,\lambda
\right)$, $s_{j} \left(t,\lambda \right)\in C^{j}
\left(\bar{\mathcal{I}},B\left(\mathcal{H}\right)\right)$ and
\begin{equation} \label{GEQ__48_}
p_{n}^{-1} \left(t,\lambda \right)\in
B\left(\mathcal{H}\right)\left(r=2n\right),\, \left(q_{n+1}
\left(t,\lambda \right)+s_{n+1} \left(t,\lambda
\right)\right)^{-1} \in B\left(\mathcal{H}\right)\,
\left(r=2n+1\right),\ t\in\bar{\mathcal{I}};
\end{equation}
these coefficients satisfy the following conditions
\begin{gather} \label{GEQ__49_}
p_{j} \left(t,\lambda \right)=p_{j}^{*} \left(t,\bar{\lambda
}\right),\, q_{j} \left(t,\lambda \right)=s_{j}^{*}
\left(t,\bar{\lambda }\right),\ \lambda \in\mathcal{B}
\end{gather}
$\big(\eqref{GEQ__49_}\ \Longleftrightarrow\ l_{\lambda }
=l_{\bar{\lambda }}^{*}\ \underset{\text{in view of
}\eqref{GEQ__17_}}\Longleftrightarrow\
H(t,l_\lambda)=H(t,l^*_{\bar\lambda}),\lambda \in
\mathcal{B}\big)$;
\begin{multline} \label{GEQ__50_}
\forall h_{0} ,\ldots ,h_{\left[\frac{r+1}{2} \right]} \in
\mathcal{H}:\\ \frac{\Im \left(\sum\limits
_{j=0}^{\left[r/2\right]}\left(p_{j} \left(t,\lambda \right)h_{j}
,h_{j} \right) +\frac{i}{2} \sum\limits
_{j=1}^{\left[\frac{r+1}{2} \right]}\left\{\left(s_{j}
\left(t,\lambda \right)h_{j} ,h_{j-1} \right)-\left(q_{j}
\left(t,\lambda \right)h_{j-1} ,h_{j} \right)\right\} \right)}{\Im
\lambda } \le 0,\\ t\in \bar{\mathcal{I}},\, \, \, \Im \lambda \ne
0.
\end{multline}

Therefore the order of expression $\Im l_{\lambda } $ is even and
therefore if $r=2n+1$ is odd, then $q_{m+1} ,\, s_{m+1} $ are
independent on $\lambda $ and $s_{n+1} =q_{n+1}^{*} $.

Condition \eqref{GEQ__50_} is equivalent to the condition:
${\left(\Im l_{\lambda } \right)\left\{f,f\right\}
\mathord{\left/{\vphantom{\left(\Im l_{\lambda }
\right)\left\{f,f\right\} \Im \lambda
}}\right.\kern-\nulldelimiterspace} \Im \lambda } \le 0,\, \, t\in
\bar{\mathcal{I}},\, \, \Im \lambda \ne 0$.

Hence $W\left(t,l_{\lambda } ,-\frac{\Im l_{\lambda } }{\Im
\lambda } \right)=\frac{\Im H\left(t,l_{\lambda } \right)}{\Im
\lambda } \ge 0,\, \, t\in \bar{\mathcal{I}},\, \, \, \Im \lambda
\ne 0$ due to Lemma \ref{lm1} and Theorem \ref{th2} and therefore
$H\left(t,l_{\lambda } \right)$ satisfy condition \textbf{(A)}
with $\mathcal{A}=\mathcal{B}$. Therefore $\forall\mu\in
\mathcal{B}\cap \mathbb{R}^1$ $W(t,l_\mu,-{\Im l_\mu\over\Im\mu
})=\left.{\partial
H(t,l_\lambda)\over\partial\lambda}\right|_{\lambda=\mu}$ is
Bochner locally integrable in uniform operator topology. Here in
view of \eqref{GEQ__7_}, \eqref{GEQ__15_} $\forall \mu \in
\mathcal{B}\bigcap \mathbb{R}^{1}\ \exists \frac{\Im l_{\mu }
}{\Im \mu } \mathop{=}\limits^{def} \frac{\Im l_{\mu +i0} }{\Im
\left(\mu +i0\right)} =\left.\frac{\partial l_{\lambda }
}{\partial \lambda }\right|_{\lambda=\mu} $, where the
coefficients ${\partial p_j(t,\mu)\over\partial\lambda}$,
${\partial q_j(t,\mu)\over\partial\lambda}$, ${\partial
s_j(t,\mu)\over\partial\lambda}$ of expression ${\partial l_{\mu }
\mathord{\left/{\vphantom{\partial l_{\mu }
\partial \mu }}\right.\kern-\nulldelimiterspace} \partial \mu } $
are Bochner locally integrable in the uniform operator topology.

Let us consider in $\mathcal{H}_1=\mathcal{H}^r$ the equation
\begin{equation} \label{GEQ__51_}
\frac{i}{2} \left(\left(Q\left(t,l_{\lambda }
\right)\vec{y}\left(t\right)\right)^{{'} } +Q^{*}
\left(t,l_{\lambda }
\right)\vec{y}\hspace{2px}'\left(t\right)\right)-H\left(t,l_{\lambda
} \right)\vec{y}\left(t\right)=W\left(t,l_{\lambda } -\frac{\Im
l_{\lambda } }{\Im \lambda } \right)F\left(t\right).
\end{equation}
This equation is an equation of \eqref{GEQ__46_} type due to
\eqref{GEQ__17_} and Lemma \ref{lm1}. Equation
\eqref{GEQ__5_} is equivalent to equation \eqref{GEQ__51_}
with $F\left(t\right)=F\left(t,l_{\bar{\lambda }} ,-\frac{\Im
l_{\lambda } }{\Im \lambda } \right)$ due to Theorem \ref{th1}.

\begin{definition} Every characteristic operator of equation
\eqref{GEQ__51_} corresponding to the equation
\eqref{GEQ__5_} is said to be a characteristic operator of
equation \eqref{GEQ__5_} on $\mathcal{I}$.
\end{definition}

Let $m$ be the same as in $n^{\circ } 1$ differential expression
of even order $s\le r$ with operator coefficients $\tilde{p}_{j}
\left(t\right)=\tilde{p}_{j}^{*} \left(t\right),\, \,
\tilde{q}_{j} \left(t\right),\, \tilde{s}_{j}
\left(t\right)=\tilde{q}_{j}^{*} \left(t\right)$ that are
independent on $\lambda $. Let
\begin{multline}
\label{GEQ__52_} \forall h_{0} ,\, \ldots ,\,
h_{\left[\frac{r+1}{2} \right]} \in \mathcal{H}:\, \, 0\le
\sum\limits _{j=0}^{{s \mathord{\left/{\vphantom{s
2}}\right.\kern-\nulldelimiterspace} 2} }\left(\tilde{p}_{j}
\left(t\right)h_{j} ,h_{j} \right) +{\Im}\sum\limits _{j=1}^{{s
\mathord{\left/{\vphantom{s 2}}\right.\kern-\nulldelimiterspace}
2} }\left(\tilde{q}_{j}
\left(t\right)h_{j-1} ,\, h_{j} \right) \le\\
 \le -\frac{\Im \left(\sum\limits
_{j=0}^{\left[{r \mathord{\left/{\vphantom{r
2}}\right.\kern-\nulldelimiterspace} 2} \right]}\left(p_{j}
\left(t,\lambda \right)h_{j} ,h_{j} \right) +\frac{i}{2}
\sum\limits _{j=1}^{\left[\frac{r+1}{2} \right]}\left(\left(s_{j}
\left(t,\lambda \right)h_{j} ,h_{j-1} \right)-\left(q_{j}
\left(t,\lambda \right)h_{j-1} ,h_{j} \right)\right) \right)}{\Im
\lambda },\\ t\in \bar{\mathcal{I}},\, \, \, \Im \lambda \ne 0.
\end{multline}
Condition \eqref{GEQ__52_} is equivalent to the condition:
$0\le m\left\{f,f\right\}\le {-(\Im l_{\lambda }) \left\{f,f\right\}
\mathord{\left/{\vphantom{\Im l_{\lambda } \left\{f,f\right\} \Im
\lambda }}\right.\kern-\nulldelimiterspace} \Im \lambda } $, $t\in
\bar{\mathcal{I}}$, $\Im \lambda \ne 0$. Hence
\begin{equation} \label{GEQ__53_}
0\le W\left(t,l_{\lambda } ,m\right)\le W\left(t,l_{\lambda }
,-\frac{\Im l_{\lambda } }{\Im \lambda } \right)=\frac{\Im
H\left(t,l_{\lambda } \right)}{\Im \lambda } \quad t\in
\bar{\mathcal{I}},\, \, \, \Im \lambda \ne 0
\end{equation}
due to Theorem \ref{th2} and Lemma \ref{lm1}.

In view of Theorem \ref{th1} equation \eqref{GEQ__1_} is
equivalent to the equation
\begin{equation} \label{GEQ__54_}
\frac{i}{2} \left(\left(Q\left(t,l_{\lambda }
\right)\vec{y}\left(t\right)\right)^{{'} } +Q^{*}
\left(t,l_{\lambda }
\right)\vec{y}\hspace{2px}'\left(t\right)\right)-H\left(t,l_{\lambda
} \right)\vec{y}\left(t\right)=W\left(t,l_{\bar{\lambda }}
,m\right)F\left(t,l_{\bar{\lambda }} ,m\right),
\end{equation}
where $Q\left(t,l_{\lambda } \right),\, H\left(t,l_{\lambda }
\right)$ are defined by \eqref{GEQ__6_}, \eqref{GEQ__7_},
\eqref{GEQ__14_}, \eqref{GEQ__15_} with $l_{\lambda } $
instead of $l$ and $W\left(t,l_{\bar{\lambda }} ,m\right)$
$F\left(t,l_{\bar{\lambda }} ,m\right)$ are defined by
\eqref{GEQ__8_}, \eqref{GEQ__16_} \eqref{GEQ__23_}
with $l_{\bar{\lambda }} $ instead of $l$ and
$\vec{y}\left(t\right)=\vec{y}\left(t,l_{\lambda } ,m,f\right)$ is
defined by \eqref{GEQ__25_} with $l_{\lambda } $ instead of
$l$.

In some cases we will suppose additionally that

\noindent $\exists \lambda _{0} \in \mathcal{B};\, \, \alpha
,\beta \in \bar{\mathcal{I}},\, \, 0\in \left[\alpha ,\beta
\right]$, the number $\delta >0$:
\begin{equation} \label{GEQ__55_}
-\int _{\alpha }^{\beta }\left(\frac{\Im l_{\lambda _{0} } }{\Im
\lambda _{0} } \right)\left\{y\left(t,\lambda _{0}
\right),y\left(t,\lambda _{0} \right)\right\} \, dt\ge \delta
\left\| P\vec{y}\left(0,l_{\lambda _{0} } ,m,0\right)\right\| ^{2}
\end{equation}
for any solution $y\left(t,\lambda _{0} \right)$ of
\eqref{GEQ__5_} as $\lambda =\lambda _{0} ,\, \, f=0$. In view
of Theorem \ref{th2} this condition is equivalent to the fact that
for the equation \eqref{GEQ__51_} with $F\left(t\right)=0$

$\exists \lambda _{0} \in \mathcal{A}=\mathcal{B};\, \, \alpha
,\beta \in \bar{\mathcal{I}},\, \, 0\in \left[\alpha ,\beta
\right]$, the number $\delta >0$:
\begin{equation} \label{GEQ__56_}
\left(\Delta _{\lambda _{0} } \left(\alpha ,\beta
\right)g,g\right)\ge \delta \left\| Pg\right\| ^{2} ,\quad g\in
\mathcal{H}^{r} .
\end{equation}

Therefore in view of \cite{Khrab6} the fulfillment of
\eqref{GEQ__55_} imply its fulfillment with $\delta
\left(\lambda \right)>0$ instead of $\delta $ for all $\lambda \in
\mathcal{B}$.

\begin{lemma}\label{lm6}
Let $M\left(\lambda \right)$ be a characteristic operator of equation
\eqref{GEQ__5_}, for which condition \eqref{GEQ__55_}
holds with $P=I_{r} $, if $\mathcal{I}$ is infinite. Let
$\Im\lambda\not= 0$, $\mathcal{H}^{r} $-valued $F\left(t\right)\in
L_{W\left(t,l_{\bar{\lambda }} ,m\right)}^{2}
\left(\mathcal{I}\right)$ (in particular one can set
$F\left(t\right)=F\left(t,l_{\bar{\lambda }} ,m\right)$, where
$f\left(t\right)\in C^{s}
\left(\mathcal{I},\mathcal{H}\right),m\left[f,f\right]<\infty $).
Then the solution
\begin{equation} \label{GEQ__57_}
x_{\lambda } \left(t,F\right)=\mathcal{R}_{\lambda } F=\int
_{\mathcal{I}}X_{\lambda } \left(t\right)\left\{M\left(\lambda
\right)-\frac{1}{2} sgn\left(s-t\right)\left(iG\right)^{-1}
\right\}X_{\bar{\lambda }}^{*}
\left(s\right)W\left(s,l_{\bar{\lambda }} ,m\right)F\left(s\right)
ds
\end{equation}
of equation \eqref{GEQ__54_} with $F\left(t\right)$ instead
$F\left(t,l_{\bar{\lambda }} ,m\right)$, satisfies the following
inequality
\begin{equation} \label{GEQ__58_}
\left\| \mathcal{R}_{\lambda } F\right\|_{L_{W\left(t,l_{\lambda }
,-\frac{\Im l_{\lambda } }{\Im \lambda } \right)}^{2}
\left(\mathcal{I}\right)}^{2} \le \Im \left(\mathcal{R}_{\lambda }
F,F\right)_{L_{W\left(t,l_{\bar{\lambda }},m \right)}^{2}
\left(\mathcal{I}\right)}/\Im\lambda,\ \Im\lambda\not= 0,
\end{equation}
where $X_{\lambda } \left(t\right)$ is the operator solution of
homogeneous equation \eqref{GEQ__54_} such that $X_{\lambda }
\left(0\right)=I_{r} $, $G=\mathcal{R}Q\left(0,l_{\lambda }
\right)$; integral \eqref{GEQ__57_} converges strongly if
$\mathcal{I}$ is infinite.
\end{lemma}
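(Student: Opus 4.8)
The plan is to use the Lagrange (boundary) form identity for the canonical system \eqref{GEQ__54_} as a bridge between the left-hand side of \eqref{GEQ__58_} and the defining inequality \eqref{GEQ__47++_} of the characteristic operator $M(\lambda)$. The estimate driving everything is the subordination of weights: applying \eqref{GEQ__53_} at $\bar\lambda$ in place of $\lambda$ and using \eqref{GEQ__17_}, \eqref{GEQ__49_} (so that $\Im H(t,l_{\bar\lambda})/\Im\bar\lambda=\Im H(t,l_\lambda)/\Im\lambda$) one gets
\begin{gather}
0\le W(t,l_{\bar\lambda},m)\le W\Bigl(t,l_\lambda,-\tfrac{\Im l_\lambda}{\Im\lambda}\Bigr)=\frac{\Im H(t,l_\lambda)}{\Im\lambda},\qquad \Im\lambda\ne 0.
\end{gather}
Abbreviating the right-hand weight by $W_\lambda(t)$, the weight $W(t,l_{\bar\lambda},m)$ that multiplies $F$ in \eqref{GEQ__57_} is dominated by the weight $W_\lambda$ in whose metric the norm in \eqref{GEQ__58_} is computed.

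Next I would record the Lagrange identity. Equation \eqref{GEQ__54_} is of type \eqref{GEQ__46_}, and after the reduction noted just after the definition of the characteristic operator we may take $\Re Q$ in the role of $Q$, so that $U[x]=([\Re Q]x,x)$. Differentiating $U[x_\lambda]$ along a solution $x_\lambda$ of \eqref{GEQ__54_} with right-hand side $g(t)=W(t,l_{\bar\lambda},m)F(t)$, substituting the equation, and using $\Im\bigl(H(t,l_\lambda)x,x\bigr)=\Im\lambda\,(W_\lambda x,x)$, gives
\begin{gather}
U[x_\lambda(\beta)]-U[x_\lambda(\alpha)]=2\,\Im\lambda\int_\alpha^\beta(W_\lambda x_\lambda,x_\lambda)\,dt+2\,\Im\int_\alpha^\beta(g,x_\lambda)\,dt.
\end{gather}
Since $W(t,l_{\bar\lambda},m)$ is self-adjoint one has $\Im\int_{\mathcal{I}}(g,x_\lambda)\,dt=-\Im(\mathcal{R}_\lambda F,F)_{L^2_{W(t,l_{\bar\lambda},m)}(\mathcal{I})}$. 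Multiplying the identity by $\Im\lambda$ and dividing by $(\Im\lambda)^2>0$ then shows that \eqref{GEQ__58_} is exactly equivalent to the one-line boundary inequality $\Im\lambda\lim_{(\alpha,\beta)\uparrow\mathcal{I}}\bigl(U[x_\lambda(\beta)]-U[x_\lambda(\alpha)]\bigr)\le 0$.

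The substantive step is therefore to verify this boundary inequality for the solution \eqref{GEQ__57_} — which is precisely condition \eqref{GEQ__47++_}, but written with $W(t,l_{\bar\lambda},m)$ in place of $W_\lambda$. The integral kernel in \eqref{GEQ__57_} is identical to that in \eqref{GEQ__47_}; only the combination $W(t,l_{\bar\lambda},m)F$ appears instead of $W_\lambda\widetilde F$. Using the subordination of the first paragraph I would represent $W(t,l_{\bar\lambda},m)F=W_\lambda\widetilde F$ with $\widetilde F$ compactly supported (the inclusion $\operatorname{ran}W(t,l_{\bar\lambda},m)\subseteq\operatorname{ran}W_\lambda$ coming from $\ker W_\lambda\subseteq\ker W(t,l_{\bar\lambda},m)$), so that $\mathcal{R}_\lambda F$ becomes a solution of the form \eqref{GEQ__47_}, to which \eqref{GEQ__47++_} applies verbatim. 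Equivalently, one may recompute the boundary form directly: for $\beta,\alpha$ outside the support of $g$ the solution equals $X_\lambda(\cdot)\bigl[M(\lambda)\pm\tfrac12(iG)^{-1}\bigr]\!\int_{\mathcal{I}}X_{\bar\lambda}^{*}g\,ds$, so the boundary form depends on the weight only through the vector $\int_{\mathcal{I}}X_{\bar\lambda}^{*}g\,ds\in\mathcal{H}_1$ and otherwise only through $M(\lambda)$, exactly as in \cite{Khrab5}; the sign property of $M(\lambda)$ that guarantees \eqref{GEQ__47++_} is a property of $M$ alone and hence insensitive to the change of weight.

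I expect the main obstacle to be the passage from compactly supported data to general $F$ and, for infinite $\mathcal{H}$ or $\mathcal{I}$, the endpoint analysis. Since \eqref{GEQ__47++_} is available only for $F$ with compact support, I would first prove \eqref{GEQ__58_} for such $F$ by the argument above, and then use this a priori estimate to show that the truncations of the integral \eqref{GEQ__57_} form a bounded, convergent family, thereby extending $\mathcal{R}_\lambda$ and the estimate to all $F\in L^2_{W(t,l_{\bar\lambda},m)}(\mathcal{I})$ and establishing the asserted strong convergence. When $\mathcal{H}$ and $\mathcal{I}$ are infinite the ranges of the weights need not be closed, $\widetilde F$ need not a priori lie in $L^2_{W_\lambda}(\mathcal{I})$, and the limits $U[x_\lambda(\beta)]$, $U[x_\lambda(\alpha)]$ must be shown to exist; this is exactly where the standing hypotheses $P=I_r$ and \eqref{GEQ__55_} (equivalently \eqref{GEQ__56_}, $\Delta_{\lambda}(\alpha,\beta)\gg 0$) enter, making $L^2_{W_\lambda}$ a genuine metric controlling the solution and licensing the endpoint limits via the $L^2$-theory of \cite{Khrab4,Khrab5,Khrab6}. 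Any residual smoothing of $F$ would be handled by Lemma \ref{lm3} and a passage to the limit.
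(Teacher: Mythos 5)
You follow essentially the same route as the paper: the weight subordination \eqref{GEQ__53_}, the Lagrange identity for the system \eqref{GEQ__54_} (your identity is exactly the paper's \eqref{GEQ__60_}, obtained there from the Green formula \eqref{GEQ__36_}), the reduction of \eqref{GEQ__58_} to a boundary inequality of type \eqref{GEQ__47++_} for the modified right-hand side, and the truncation $F_n=\chi_nF$ with an a priori bound to pass to general $F$ and infinite $\mathcal{I}$. The gap is in the one step that carries all the weight: the justification of the boundary-sign inequality for data built with the weight $W(t,l_{\bar\lambda},m)$ rather than $W_\lambda=\Im H(t,l_\lambda)/\Im\lambda$.

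Your route (a) you rightly discard yourself (the range inclusion can fail). Route (b) rests on the assertion that the sign property guaranteeing \eqref{GEQ__47++_} ``is a property of $M(\lambda)$ alone and hence insensitive to the change of weight''. That assertion is false in general. Condition \eqref{GEQ__47++_} quantifies only over data $W_\lambda\widetilde F$ with $\widetilde F\in L^2_{W_\lambda}$ compactly supported, and every input vector $h=\int X^*_{\bar\lambda}W_\lambda\widetilde F\,ds$ so produced lies automatically in $N^{\bot}$; hence the characteristic-operator property constrains the boundary form built from $X_\lambda(t)\left(M(\lambda)\pm\tfrac12(iG)^{-1}\right)h$ \emph{only} for $h\in N^{\bot}$, while on $N$ the operator $M(\lambda)$ is completely unconstrained (the paper itself notes that $M(\lambda)$ and $PM(\lambda)P$ are simultaneously characteristic operators). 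A toy example makes this concrete: if $W_\lambda\equiv 0$ (scalar $Q=1$, $H_\lambda=0$), then \eqref{GEQ__47++_} is vacuous and \emph{every} analytic $M(\lambda)=M^*(\bar\lambda)$ is a characteristic operator, yet the boundary form evaluated on an arbitrary vector $h$ equals $-2\,\Im\lambda\,\Im M(\lambda)\,\|h\|^{2}$, which is positive for $M(\lambda)=-\lambda$, $h\ne 0$. So before invoking the characteristic-operator property for the solution \eqref{GEQ__57_} you must prove that your vector $h=\int X^*_{\bar\lambda}(s)W(s,l_{\bar\lambda},m)F(s)\,ds$ lies in $N^{\bot}$. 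That is precisely the paper's Lemma \ref{lm7} (inclusion \eqref{GEQ__61-_}), and its proof is the observation you state but never deploy for this purpose: $0\le W(t,l_{\bar\lambda},m)\le W_\lambda$ gives $\ker W_\lambda(t)\subseteq\ker W(t,l_{\bar\lambda},m)$ pointwise, so for $g\in N$ one has $W_\lambda X_{\bar\lambda}g=0$ a.e., hence $W(t,l_{\bar\lambda},m)X_{\bar\lambda}(t)g=0$ a.e., hence $(h,g)=0$. With Lemma \ref{lm7} in place, and with item $2^{\circ}$ of Theorem 1.1 of \cite{Khrab5} (which supplies the sign property for all of $N^{\bot}$, not merely for the dense set of vectors produced by admissible data), your outline closes up into the paper's proof.
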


\begin{proof} Let us denote
$$
K\left(t,s,\lambda \right)=X_{\lambda }
\left(t\right)\left\{M\left(\lambda \right)-\frac{1}{2}
sgn\left(s-t\right)\left(iG\right)^{-1} \right\}X_{\bar{\lambda
}}^{*} \left(s\right)$$

If \eqref{GEQ__55_} holds with $P=I_{r}$ if $\mathcal{I}$ is
infinite, then in view of \eqref{GEQ__53_} and
\cite[p.166]{Khrab5} there exists a locally bounded on $s$ and on
$\lambda $ constant $k\left(s,\lambda \right)$ such that
\begin{equation} \label{GEQ__59_}
\forall h\in \mathcal{H}^{r} :\quad \left\| K\left(t,s,\lambda
\right)h\right\| _{L_{W\left(t,{l}_{\bar\lambda } ,m\right)}^{2}
\left(\mathcal{I}\right)} \le k\left(s,\lambda \right)\left\|
h\right\|
\end{equation}

Hence integral \eqref{GEQ__57_} converges strongly if
$\mathcal{I}$ is in finite.

Let $F\left(t\right)$ have compact support and
$supp F\left(t\right)\subseteq \left[\alpha ,\beta \right]$.

Then in view of \eqref{GEQ__36_}
\begin{multline} \label{GEQ__60_}
\int _{\alpha }^{\beta }\left(W\left(t,l_{\lambda } ,-\frac{\Im
l_{\lambda } }{\Im \lambda } \right)\mathcal{R}_{\lambda }
F,\mathcal{R} _{\lambda } F\right) dt-\frac{\Im \int _{\alpha
}^{\beta }\left(W\left(t,l_{\bar{\lambda }}
,m\right)\mathcal{R}_{\lambda } F, F\right) dt}{\Im \lambda } =
\\
=\left.\frac{1}{2} \, \frac{\left(\Re Q\left(t,l_{\lambda }
\right)\mathcal{R} _{\lambda } F,\mathcal{R}_\lambda F\right)}{\Im
\lambda } \right|_\alpha^\beta \le 0
\end{multline}
where the last inequality is a corollary of $n^{\circ } 2$.
Theorem 1.1 from \cite[p.162]{Khrab5} and the following

\begin{lemma}\label{lm7}
Let $\mathcal{F}_{\lambda } $ is the set of $\mathcal{H}^{r}
$-valued function from $L_{W\left(t,l_{\bar{\lambda }}
,m\right)}^{2} \left(\alpha ,\beta \right)$,
\begin{gather}\label{GEQ__60+_} I_{\lambda }
\left(\alpha ,\beta \right)F=\int _{\alpha }^{\beta
}X_{\bar{\lambda }}^{*} \left(t\right)W\left(t,l_{\bar{\lambda }}
,m\right)F\left(t\right)dt ,\quad F\left(t\right)\in
\mathcal{F}_{\lambda }
\end{gather}
Then
\begin{equation} \label{GEQ__61-_}
I_{\lambda } \left(\alpha ,\beta \right)F\in \left\{Ker\int
_{\alpha }^{\beta }X_{\bar{\lambda }}^{*}
\left(t\right)W\left(t,l_{\bar{\lambda }} ,m\right)X_{\bar{\lambda
}} \left(t\right)dt \right\}^{\bot } \subseteq N^{\bot } .
\end{equation}
\end{lemma}

\begin{proof} Let $h\in Ker\int _{\alpha }^{\beta }X_{\bar{\lambda
}}^{*} \left(t\right)W\left(t,l_{\bar{\lambda }}
,m\right)X_{\bar{\lambda }} \left(t\right)dt $ $\Rightarrow
W\left(t,l_{\bar{\lambda }} ,m\right)X_{\bar{\lambda }}
\left(t\right)h=0$ $\Rightarrow I_{\lambda } \left(\alpha ,\beta
\right)F \bot h$. The second enclosure in \eqref{GEQ__61-_} is
a corollary of condition \eqref{GEQ__53_}. Lemma \ref{lm7} and
inequality \eqref{GEQ__60_} are proved.
\end{proof}

Thus Lemma \ref{lm6} is proved if $\mathcal{I}$ is finite. Let us
prove it for infinite $\mathcal{I}$. Let finite intervals
$\left(\alpha _{n} ,\beta _{n} \right)\uparrow \mathcal{I},\quad
F_{n} =\chi _{n} F$, where $\chi _{n} $ - is a characteristic
function of $\left(\alpha _{n} ,\beta _{n} \right)$. If
$\left(\alpha ,\beta \right)\subseteq \left(\alpha _{n} ,\beta
_{n} \right)$ then
$$\left\| \mathcal{R}_{\lambda } F_{n} \right\|
_{L_{W\left(t,l_{\lambda } ,-\frac{\Im l_{\lambda } }{\Im \lambda
} \right)}^{2} } \left(\alpha ,\beta \right)\le \frac{\left\|
F\right\| _{L_{W\left(t,l_{\bar{\lambda }} ,m\right)}^{2}
\left(\mathcal{I}\right)} }{\left|\Im \lambda \right|}$$
in view
of \eqref{GEQ__60_}, \eqref{GEQ__53_}. But local uniformly
on $t$: $\left(\mathcal{R}_{\lambda } F_{n}
\right)\left(t\right)\to \left(\mathcal{R}_{\lambda }
F\right)\left(t\right)$, in view of \eqref{GEQ__59_}. Hence
\begin{equation} \label{GEQ__61_}
\left\| \mathcal{R}_{\lambda } F\right\| _{L_{W\left(t,l_{\lambda
} ,-\frac{\Im l_{\lambda } }{\Im \lambda } \right)}^{2}
\left(\alpha ,\beta \right)} \le \frac{\left\| F\right\|
_{L_{W\left(t,l_{\bar{\lambda }} ,m\right)}^{2}
\left(\mathcal{I}\right)} }{\left|\Im \lambda \right|} .
\end{equation}
for any finite $\left(\alpha ,\beta \right)$. Hence
\eqref{GEQ__61_} holds with $\mathcal{I}$ instead of
$\left(\alpha ,\beta \right)$. In view of last fact
$\mathcal{R}_{\lambda } F_{n} \to \mathcal{R}_{\lambda } F$ in
$L_{W\left(t,l_{\lambda } ,-\frac{\Im l_{\lambda } }{\Im \lambda }
\right)}^{2} \left(\mathcal{I}\right)$. Hence \eqref{GEQ__58_}
is proved since it is proved for $F_{n} $. Lemma \ref{lm6} is
proved.
\end{proof}

Let us notice that in view of \cite{Khrab5} $PM\left(\lambda
\right)P$ is a characteristic operator of equation \eqref{GEQ__5_}, if
$M\left(\lambda \right)$ is its characteristic operator Ocharacteristic operators
$M\left(\lambda \right)$ and $P M\left(\lambda \right) P$ are
equal in $B\left(L_{W\left(t,l_{\bar{\lambda }} ,m\right)}^{2}
\left(\mathcal{I}\right),\, L_{W\left(t,l_{\lambda },-\frac{\Im
l_{\lambda } }{\Im \lambda }\right)}
\left(\mathcal{I}\right)\right)$.

Let us notice what in view of \eqref{GEQ__52_} $l_{\lambda } $
can be a represented in form \eqref{GEQ__2_} where
\begin{equation} \label{GEQ__561_}
l=\Re l_{i} , n_{\lambda } =l_{\lambda } -l-\lambda m; {\Im
n_{\lambda } \left\{f,f\right\}
\mathord{\left/{\vphantom{n_{\lambda } \left\{f,f\right\}
\mathcal{I}\lambda \ge 0}}\right.\kern-\nulldelimiterspace}
\Im\lambda \ge 0} ,t\in \bar{\mathcal{I}},\, {\Im}\lambda \ne 0.
\end{equation}

From now on we suppose that $l_{\lambda } $ has a representation
\eqref{GEQ__2_}, \eqref{GEQ__561_} and therefore the
order of $n_\lambda$ is even.

\section{Main results}

We consider pre-Hilbert spaces $\mathop{H}\limits^{\circ } $ and
$H$ of vector-functions $y\left(t\right)\in C_{0}^{s}
\left(\bar{\mathcal{I}},\mathcal{H}\right)$ and
$y\left(t\right)\in C^{s}
\left(\bar{\mathcal{I}},\mathcal{H}\right),\,
m\left[y\left(t\right),\, y\left(t\right)\right]<\infty $
correspondingly with a scalar product
\[\left(f\left(t\right),\, g\left(t\right)\right)_{m} =m\left[f\left(t\right),\, g\left(t\right)\right],\]
where $m\left[f,\, g\right]$ is defined by \eqref{GEQ__27_}
with expression $m$ from condition \eqref{GEQ__52_} instead of
$L$. Namely  
\begin{gather}
m\left[f,\, g\right]=\int\limits_{\mathcal{I}}m\left\{f,\, g\right\}dt,
\end{gather}
where $m\left\{f,\, g\right\}=\sum\limits _{j=0}^{s/2} (\tilde{p}_{j}
\left(t\right)f^{(j)}(t) ,g^{(j)}(t) ) +{i\over 2}\sum\limits _{j=1}^{s/2}
\left((\tilde{q}^*_{j}
\left(t\right)f^{(j)}(t) ,\, g^{(j-1)}(t) )-(\tilde{q}_{j}
\left(t\right)f^{(j-1)}(t) ,\, g^{(j)}(t) )\right)$.

The null-elements of $H$ are given by
\begin{proposition}\label{prop1}
Let $f\left(t\right)\in H$. Then
\[m\left[f,f\right]=0\Leftrightarrow m\left[f\right]=f^{\left[s\right]} \left(t\right)=\, ...\, =f^{\left[{s \mathord{\left/{\vphantom{s 2}}\right.\kern-\nulldelimiterspace} 2} \right]} \left(t\right)=0,\, \, \, t\in \bar{\mathcal{I}}.\]
\end{proposition}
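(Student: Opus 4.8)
The plan is to reduce the integral identity $m[f,f]=0$ to a pointwise statement about the Dirichlet density $m\{f,f\}$, and then to read off the vanishing of the quasi-derivatives directly from the structure of this nonnegative density. First I would use that $m\{f,f\}(t)\ge 0$ for every $t\in\bar{\mathcal I}$ (this is the left inequality in \eqref{GEQ__52_}, i.e.\ the nonnegativity of the subintegral form of $m$) together with the fact that $m\{f,f\}(t)$ is continuous on $\bar{\mathcal I}$ because $f\in C^{s}(\bar{\mathcal I},\mathcal H)$. Hence
\[
m[f,f]=\int_{\mathcal I}m\{f,f\}\,dt=0 \iff m\{f,f\}(t)=0\ \ \forall t\in\bar{\mathcal I},
\]
so the whole statement becomes a pointwise assertion.

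Next I would view $m\{f,f\}(t)=\bigl(\mathcal M(t)\eta(t),\eta(t)\bigr)$ as a Hermitian form in the jet $\eta(t)=\mathrm{col}\,(f(t),f'(t),\dots,f^{(s/2)}(t))\in\mathcal H^{s/2+1}$, where $\mathcal M(t)\in B(\mathcal H^{s/2+1})$ is the tridiagonal operator matrix with diagonal entries $\tilde p_{k}$, subdiagonal entries $-\tfrac i2\tilde q_{k}$ and superdiagonal entries $\tfrac i2\tilde s_{k+1}$. Since $\tilde p_{k}=\tilde p_{k}^{*}$ and $\tilde s_{k}=\tilde q_{k}^{*}$, this $\mathcal M(t)$ is self-adjoint, and by the nonnegativity recalled above $\mathcal M(t)\ge 0$. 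The Cauchy--Schwarz inequality for the nonnegative form $(\mathcal M(t)\,\cdot,\cdot)$ then gives $m\{f,f\}(t)=0\iff\mathcal M(t)\eta(t)=0$. Computing the components of $\mathcal M(t)\eta(t)$ (equivalently the $\bar\eta_{k}$-gradient of the density) and comparing them with the quasi-derivative recursion \eqref{GEQ__11_}, \eqref{GEQ__12_} written for $m$, I expect the identities
\[
(\mathcal M\eta)_{s/2}=f^{[s/2]}(t|m),\qquad (\mathcal M\eta)_{j}=f^{[s-j]}(t|m)+\bigl(f^{[s-j-1]}(t|m)\bigr)',\quad j=0,\dots,\tfrac s2-1 .
\]
Thus $m\{f,f\}\equiv 0$ on $\bar{\mathcal I}$ is equivalent to $f^{[s/2]}(t|m)\equiv0$ \emph{together with} the coupling relations $f^{[s-j]}(t|m)=-\bigl(f^{[s-j-1]}(t|m)\bigr)'$, $j=0,\dots,s/2-1$.

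The final step is a short downward induction. Assuming the coupling relations and $f^{[s/2]}\equiv0$, differentiating gives $(f^{[s/2]})'\equiv0$, so the relation with $j=s/2-1$ yields $f^{[s/2+1]}\equiv0$; differentiating again and using $j=s/2-2$ yields $f^{[s/2+2]}\equiv0$; iterating down to $j=0$ produces $f^{[s/2]}=f^{[s/2+1]}=\dots=f^{[s]}=m[f]=0$. Conversely, if all of $f^{[s/2]},\dots,f^{[s]}$ vanish identically on $\bar{\mathcal I}$, then so do their derivatives, the coupling relations hold, hence $\mathcal M\eta\equiv0$ and $m\{f,f\}\equiv0$, i.e.\ $m[f,f]=0$. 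This closes both implications.

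The main obstacle I anticipate is the bookkeeping in the middle step: matching the gradient components of the nonnegative density exactly with the recursion \eqref{GEQ__12_}, in particular accounting for the term $-Df^{[s-j-1]}$ appearing there, and noticing that the pointwise algebraic condition $\mathcal M(t)\eta(t)=0$ constrains only $f,\dots,f^{(s/2)}$. The decisive observation is that it is the \emph{identical} vanishing of $m\{f,f\}$ on the whole interval (not merely at a single point) that supplies control of the derivatives $(f^{[s-j-1]})'$, and this is precisely what lets the induction promote the single condition $f^{[s/2]}=0$ into the full tower of quasi-derivative conditions up to order $s$.
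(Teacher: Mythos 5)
Your proof is correct and follows essentially the same route as the paper: the paper likewise introduces the nonnegative operator matrix $m(t)$ of the quadratic form on the left of \eqref{GEQ__52_}, reduces $m[f,f]=0$ to the pointwise kernel condition $m(t)\,\mathrm{col}\{f,\dots,f^{(s/2)},0,\dots,0\}=0$, and reads off the vanishing of $f^{[s/2]},\dots,f^{[s]}$ from it. The only difference is one of exposition: the paper compresses the Cauchy--Schwarz step, the identification of the components with $f^{[s/2]}$ and $f^{[s-j]}+\bigl(f^{[s-j-1]}\bigr)'$, and the downward induction into two unexplained equivalences, all of which you spell out explicitly and correctly.
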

\begin{proof}
Let us denote by $m\left(t\right)\in B\left(\mathcal{H}^{n+1}
\right)$ the operator matrix corresponding to the quadratic form
in left side of \eqref{GEQ__52_}. Since $m\left(t\right)\ge 0$
one has
$$
m\left[f,f\right]=0\Leftrightarrow
m\left(t\right)col\left\{f\left(t\right),\ldots
,f^{\left(s/2\right)} ,0,\ldots ,0\right\}=0\Leftrightarrow
f^{\left[s\right]} \left(t\right)=\ldots =f^{\left[s/2\right]} =0
$$
\end{proof}

\begin{example}\label{ex1} Let $\dim \mathcal{H}=1,\, \, s=2,\, \tilde{p}_{1}
\left(t\right)>0,\, \left|\tilde{q}_{1}
\left(t\right)\right|^2=4{\tilde{p}_{1}
\left(t\right)\tilde{p}_{0} \left(t\right)} $. Then for expression
$m$ the first inequality \eqref{GEQ__52_} holds and
$m\left\{f_{0} ,f_{0} \right\}\equiv 0$ for $f_{0}
\left(t\right)=\exp \left({i\over 2}\int_0^t \tilde q_1/\tilde p_1
dt\right)\ne 0$ in view of Proposition \ref{prop1}.
\end{example}

By $\mathop{L_{m}^{2} }\limits^{\circ } \left(\mathcal{I}\right)$
and $L_{m}^{2} \left(\mathcal{I}\right)$ we denote the completions
of spaces $\mathop{H}\limits^{\circ } $ and $H$ in the norm
$\left\| \, \bullet \, \right\| _{m} =\sqrt{\left(\, \bullet ,\,
\bullet \right)_{m} } $ correspondingly. By
$\mathop{P}\limits^{\circ } $ we denote the orthoprojection in
$\mathop{L_{m}^{2} }\limits^{} \left(\mathcal{I}\right)$ onto
$\mathop{L_{m}^{2} }\limits^{\circ } \left(\mathcal{I}\right)$.

\begin{theorem}\label{th4}
Let $M\left(\lambda \right)$ be a characteristic operator of equation
\eqref{GEQ__5_}, for which the condition \eqref{GEQ__55_}
with $P=I_{r} $ holds if $\mathcal{I}$ is infinite. Let
$\Im\lambda\not= 0$, $f\left(t\right)\in H$ and
\begin{multline} \label{GEQ__62_}
col\left\{y_{j} \left(t,\lambda ,f\right)\right\}=\\=\int
_{\mathcal{I}}X_{\lambda } \left(t\right) \left\{M\left(\lambda
\right)-\frac{1}{2} sgn\left(s-t\right)\left(iG\right)^{-1}
\right\}X_{\bar{\lambda }}^{*}
\left(s\right)W\left(s,l_{\bar{\lambda }}
,m\right)F\left(s,l_{\bar{\lambda }} ,m\right)\, ds,\, y_{j} \in
\mathcal{H}
\end{multline}
be a solution of equation \eqref{GEQ__54_}, that corresponds
to equation \eqref{GEQ__1_}, where $X_{\lambda }
\left(t\right)$ is the operator solution of homogeneons equation
\eqref{GEQ__54_} such that $X_{\lambda } \left(0\right)=I_{r}
;\, \, G=\Re Q\left(0,l_{\lambda } \right)$ (if $\mathcal{I}$ is
infinite integral \eqref{GEQ__62_} converges strongly). Then
the first component of vector function \eqref{GEQ__62_} is a
solution of equation \eqref{GEQ__1_}. It defines densely
defined in $L_{m}^{2} \left(\mathcal{I}\right)$
integro-differential operator
\begin{equation} \label{GEQ__63_}
R\left(\lambda \right)f=y_{1} \left(t,\lambda ,f\right),\quad f\in H
\end{equation}
which has the following properties after closing

\noindent 1${}^\circ$
\begin{gather}\label{GEQ__64_}
R^{*} \left(\lambda \right)=R\left(\bar{\lambda }\right),\, \, \,
{\Im}\lambda \ne 0
\end{gather}

\noindent 2${}^\circ$
\begin{gather}\label{GEQ__65_}
R\left(\lambda \right)\text{ is holomorphic on
}\mathbb{C}\setminus \mathbb{R}^1
\end{gather}

\noindent 3${}^\circ$
\begin{gather}\label{GEQ__66_}
\left\| R\left(\lambda \right)f\right\| _{L_{m}^{2}
\left(\mathcal{I}\right)}^{2} \le \frac{\Im \left(R\left(\lambda
\right)f,f\right)_{L_{m}^{2} \left(\mathcal{I}\right)} }{\Im
\lambda } ,\, \, \, {\Im}\lambda \ne 0,\, \, \, f\in L_{m}^{2}
\left(\mathcal{I}\right)
\end{gather}
\end{theorem}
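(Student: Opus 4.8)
The plan is to reduce everything to the canonical first order system \eqref{GEQ__54_} and then feed the estimate already established for that system in Lemma \ref{lm6}. First I would note that by Theorem \ref{th1} the vector-function \eqref{GEQ__62_}, which is exactly $\mathcal{R}_\lambda F$ \eqref{GEQ__57_} with $F=F(t,l_{\bar\lambda},m)$, is a solution of \eqref{GEQ__54_}, and that the first component of the vector $\vec y(t,l_\lambda,m,f)$ in \eqref{GEQ__25_} is precisely the solution $y$ of \eqref{GEQ__1_}. Hence $y_1(t,\lambda,f)$ solves \eqref{GEQ__1_} and $R(\lambda)f=y_1$ is well defined on $H$. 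Since $L^2_m(\mathcal{I})$ is by definition the completion of $H$, the operator $R(\lambda)$ is densely defined.

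The core is property $3^\circ$. I would establish the two identities
\[
\|R(\lambda)f\|^2_{L^2_m(\mathcal{I})}=\|\mathcal{R}_\lambda F\|^2_{L^2_{W(t,l_\lambda,m)}(\mathcal{I})},\qquad (R(\lambda)f,f)_{L^2_m(\mathcal{I})}=(\mathcal{R}_\lambda F,F)_{L^2_{W(t,l_{\bar\lambda},m)}(\mathcal{I})},
\]
by means of part 1 of Theorem \ref{th2} (formula \eqref{GEQ__29_}) together with the remark that $W(t,\cdot,m)$ ignores the null-components of its vector argument: for $s<2n$ the components of $\vec y(t,l_\lambda,m,f)$ on which the weight acts coincide with the derivatives $y_1^{(j)}$, $j\le s/2$, that enter the vector $F$ built from $y_1$, so \eqref{GEQ__29_} applies directly. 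Granting these identities, \eqref{GEQ__53_} gives $\|\mathcal{R}_\lambda F\|^2_{L^2_{W(t,l_\lambda,m)}}\le\|\mathcal{R}_\lambda F\|^2_{L^2_{W(t,l_\lambda,-\Im l_\lambda/\Im\lambda)}}$, and Lemma \ref{lm6} bounds the right member by $\Im(\mathcal{R}_\lambda F,F)/\Im\lambda$; chaining these inequalities yields \eqref{GEQ__66_} for $f\in H$. Finally \eqref{GEQ__66_} with Cauchy--Schwarz gives $\|R(\lambda)f\|_{L^2_m}\le\|f\|_{L^2_m}/|\Im\lambda|$, so $R(\lambda)$ is bounded and extends by continuity to all of $L^2_m(\mathcal{I})$, which is the closing referred to in the statement.

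For $1^\circ$ I would show that the integral operator is symmetric in the required pairing: using the characteristic-operator symmetry $M(\lambda)=M^*(\bar\lambda)$, the self-adjointness $G=G^*$ together with \eqref{GEQ__47+_}, and the bilinear relation \eqref{GEQ__31_} (equivalently the Green formula \eqref{GEQ__36_}), one checks that the kernel in \eqref{GEQ__57_} is Hermitian under $(t,\lambda)\leftrightarrow(s,\bar\lambda)$, whence $(R(\lambda)f,g)_{L^2_m}=(f,R(\bar\lambda)g)_{L^2_m}$ on $H$; this passes to the closures as \eqref{GEQ__64_}. Property $2^\circ$ follows from the holomorphy in $\lambda$ of $X_\lambda(t)$, of $M(\lambda)$, and of the coefficients (condition (\textbf{B})), combined with the local-uniform and strong convergence of the integral guaranteed by \eqref{GEQ__59_}, so that $\lambda\mapsto R(\lambda)f$ is a holomorphic $L^2_m$-valued map on $\mathbb{C}\setminus\mathbb{R}^1$.

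The hard part will be the two identities of the second paragraph in the diagonal case $r=s=2n$. There the solution vector $\vec y(t,l_\lambda,m,f)$ and the data vector $F$ built from $y_1$ no longer agree on the components seen by the weight; they differ by $\mathcal{F}(t,m)$ \eqref{GEQ__32_}, and upon expanding the weighted quadratic form extra terms in $f^{[n]}(t|m)$ and $y_1^{[n]}$ appear exactly as in \eqref{GEQ__33_}--\eqref{GEQ__35_}. I expect the clean identities to acquire correction terms, and the real work is to verify, using parts 2a) and 2b) of Theorem \ref{th2}, that these corrections are consistent with \eqref{GEQ__66_} — the same delicate bookkeeping flagged in Theorem \ref{th2}, where for $r=s$ the corresponding terms do not coincide.
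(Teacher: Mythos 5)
Your reduction via Theorem \ref{th1}, your argument for properties $1^\circ$ and $2^\circ$, and your treatment of $3^\circ$ away from the diagonal case all run parallel to the paper's proof. The genuine gap is exactly the point you defer to ``the real work'': property $3^\circ$ for $r=s=2n$, and the repair you anticipate (``verify that the corrections are consistent'') is not available in the scheme you set up. Your two clean identities do fail there, and the two corrections are \emph{different} objects: expanding $(W(t,l_\lambda,m)\vec y,\vec y)$ with $\vec y=Y(t,l_\lambda)-\mathcal{F}(t,m)$ produces cross terms built from $y_1^{[n]}(t|m)$ and $f^{[n]}(t|m)$, whereas by \eqref{GEQ__331_} the inner-product identity picks up the correction $2\Re\bigl(p_n^{*-1}y_1^{[n]}(t|\Im l_\lambda),f^{[n]}(t|m)\bigr)+\Im\bigl(p_n^{-1}f^{[n]}(t|m),f^{[n]}(t|m)\bigr)$, which involves $y_1^{[n]}(t|\Im l_\lambda)$, not $y_1^{[n]}(t|m)$. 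Consequently, chaining \eqref{GEQ__53_} and Lemma \ref{lm6} through $\|\mathcal{R}_\lambda F\|^2_{L^2_{W(t,l_\lambda,m)}}$, as you propose, only yields
\begin{equation*}
\|R(\lambda)f\|^2_{L^2_m}-\frac{\Im\left(R(\lambda)f,f\right)_{L^2_m}}{\Im\lambda}\le\int_{\mathcal{I}}\bigl(a'(t)-a(t)\bigr)\,dt,
\end{equation*}
where $a$ is the weight-$m$ correction and $a'$ is the inner-product correction divided by $\Im\lambda$; nothing in Theorem \ref{th2}, nor anywhere else in the paper, supplies the needed sign $\int a'\le\int a$. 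The corrections cancel only for one specific pairing, namely the one in \eqref{GEQ__30_}: the norm taken with weight $-\Im l_\lambda/\Im\lambda$ set against the inner product with weight $m$ (the identical correction terms in \eqref{GEQ__33_} and \eqref{GEQ__331_}).

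That cancellation is the paper's whole device, and it dictates a different order of operations in which the weight-$m$ vector norm never appears: (i) the \emph{scalar} inequality $m\{y_1,y_1\}\le-(\Im l_\lambda)\{y_1,y_1\}/\Im\lambda$ from \eqref{GEQ__52_} (not the operator inequality \eqref{GEQ__53_}) gives $\|R(\lambda)f\|^2_{L^2_m(\alpha,\beta)}\le\|R(\lambda)f\|^2_{L^2_{-\Im l_\lambda/\Im\lambda}(\alpha,\beta)}$; (ii) Theorem \ref{th2}(2a), i.e. \eqref{GEQ__30_}, is applied as an identity for the \emph{combined} quantity, converting $\|R(\lambda)f\|^2_{L^2_{-\Im l_\lambda/\Im\lambda}(\alpha,\beta)}-\Im(R(\lambda)f,f)_{L^2_m(\alpha,\beta)}/\Im\lambda$ exactly into $\|\mathcal{R}_\lambda F\|^2_{L^2_{W(t,l_\lambda,-\Im l_\lambda/\Im\lambda)}(\alpha,\beta)}-\Im(\mathcal{R}_\lambda F,F)_{L^2_{W(t,l_{\bar\lambda},m)}(\alpha,\beta)}/\Im\lambda$; (iii) letting $(\alpha,\beta)\uparrow\mathcal{I}$, Lemma \ref{lm6} (inequality \eqref{GEQ__58_}) shows this limit is nonpositive; this is \eqref{GEQ__67_}, and \eqref{GEQ__66_} follows. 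So the fix is structural: abandon the two separate identities and work with the difference from the start. Two smaller remarks: your clean identities are in fact valid not only for $s<2n$ but also for $r=2n+1$, $s=2n$, so the only problematic case is $r=s=2n$; and in $1^\circ$, extending the vector-level adjoint relation $(\mathcal{R}_\lambda F,G)_{L^2_{W(t,l_\lambda,m)}}=(F,\mathcal{R}_{\bar\lambda}G)_{L^2_{W(t,l_{\bar\lambda},m)}}$ from compactly supported $F$, $G$ to arbitrary ones requires the norm bounds that the paper derives from \eqref{GEQ__68_}, \eqref{GEQ__69_}, a step your sketch skips but which is routine.
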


Let us notice that the definition of the operator $R\left(\lambda
\right)$ is correct. Indeed if $f\left(t\right)\in H$,
$m\left[f,f\right]=0$, then $R\left(\lambda \right)f\equiv 0$
since $W\left(t,l_{\bar{\lambda }}
,m\right)F\left(t,l_{\bar{\lambda }},m \right)\equiv 0$ due to
\eqref{GEQ__29_}, \eqref{GEQ__53_}.

\begin{proof}
In view of Lemma \ref{lm6} integral \eqref{GEQ__62_} converges
strongly if $\mathcal{I}$ is infinite. In view of Theorem
\ref{th1} $y_{1} \left(t,\lambda ,f\right)$ \eqref{GEQ__63_}
is a solution of equation \eqref{GEQ__1_}.

In view of \eqref{GEQ__52_}, \eqref{GEQ__30_}
\begin{multline} \label{GEQ__67_}
\left\| R\left(\lambda \right)f\right\| _{L_{m}^{2} \left(\alpha
,\beta \right)} -\frac{\Im \left(R\left(\lambda
\right)f,f\right)_{L_{m}^{2} \left(\alpha ,\beta \right)} }{\Im
\lambda } \le \left\| R\left(\lambda \right)f\right\|
_{L_{-\frac{\Im l_{\lambda } }{\Im \lambda }}^{2}\left(\alpha
,\beta \right) } -\frac{\Im \left(R\left(\lambda
\right)f,f\right)_{L_{m}^{2} \left(\alpha ,\beta \right)} }{\Im
\lambda } =\\
=\left\| \mathcal{R}_{\lambda } F\left(t,l_{\bar{\lambda }}
,m\right)\right\| _{L_{W\left(t,l_{\lambda } ,-\frac{\Im
l_{\lambda } }{\Im \lambda } \right)}^{2} \left(\alpha ,\beta
\right)} -\frac{\Im \left(\mathcal{R}_{\lambda }
F\left(t,l_{\bar{\lambda }} ,m\right),F(t,,l_{\bar{\lambda }}
,m\right)_{L_{W\left(t,l_{\bar{\lambda }} ,m\right)}^{2}
\left(\alpha ,\beta \right)} }{\Im \lambda }.
\end{multline}
In view of Lemma \ref{lm6} a nonnegative limit of the
right-hand-side of \eqref{GEQ__67_} exists, when $\left(\alpha
,\beta \right)\uparrow \mathcal{I}$. Hence \eqref{GEQ__66_} is
proved.

Let $\mathcal{H}^{r}$-valued $F(t)\in
L^2_{W(t,l_{\bar\lambda},m)}(\mathcal{I})$. Then in view of
\eqref{GEQ__53_}, Lemma \ref{lm6}, \eqref{GEQ__18_} one
has
\begin{gather} \label{GEQ__68_}
\left\| \mathcal{R}_{\lambda } F\right\| _{L_{W\left(t,l_{\lambda
} ,m\right)}^{2} \left(\mathcal{I}\right)}^{2} \le \left\|
\mathcal{R}_{\lambda } F\right\| _{_{L_{W\left(t,l_{\lambda }
,-\frac{\Im l}{\Im \lambda } \right)}^{2}
\left(\mathcal{I}\right)} }^{2} \le \frac{\Im
\left(\mathcal{R}_{\lambda }
F,F\right)_{L_{W\left(t,l_{\bar{\lambda }} ,m\right)}^{2}
\left(\mathcal{I}\right)} }{\Im \lambda } , \\
\label{GEQ__69_} \left\| \mathcal{R}_{\lambda}F \right\|
_{L_{W\left(t,l_{\bar{\lambda }} ,m\right)}^{2}
\left(\mathcal{I}\right)}^{2} \le \left\| \mathcal{R}_{\lambda}F
\right\| _{L_{W\left(t,l_{\bar{\lambda }} ,-\frac{\Im l_{\lambda }
}{\Im \lambda } \right)}^{2} \left(\mathcal{I}\right)} =\left\|
\mathcal{R}_{\lambda}F \right\| _{L_{W\left(t,l_{\lambda }
,-\frac{\Im l_{\lambda } }{\Im \lambda } \right)}^{2}
\left(\mathcal{I}\right)} .
\end{gather}

In view of \eqref{GEQ__68_}, \eqref{GEQ__69_} we have
\begin{gather} \label{GEQ__70_}
\left\| \mathcal{R}_{\lambda } F\right\| _{L_{W\left(t,l_{\lambda
} ,m\right)}^{2} \left(\mathcal{I}\right)} \le {\left\| F\right\|
_{L_{W\left(t,l_{\bar{\lambda }} ,m\right)}^{2}
\left(\mathcal{I}\right)}  \mathord{\left/{\vphantom{\left\|
F\right\| _{L_{W\left(t,l_{\bar{\lambda }} ,m\right)}^{2}
\left(\mathcal{I}\right)}  \left|\Im \lambda
\right|}}\right.\kern-\nulldelimiterspace} \left|\Im \lambda
\right|},\\ \label{GEQ__71_} \left\| \mathcal{R}_{\lambda }
F\right\| _{L_{W\left(t,l_{\bar{\lambda }} ,m\right)}^{2}
\left(\mathcal{I}\right)} \le {\left\| F\right\|
_{L_{W\left(t,l_{\bar{\lambda }} ,m\right)}^{2}
\left(\mathcal{I}\right)}  \mathord{\left/{\vphantom{\left\|
F\right\| _{L_{W\left(t,l_{\bar{\lambda }} ,m\right)}^{2}
\left(\mathcal{I}\right)}  \left|\Im \lambda
\right|}}\right.\kern-\nulldelimiterspace} \left|\Im \lambda
\right|} .
\end{gather}

Let $F\left(t\right)\in L_{W\left(t,l_{\bar{\lambda }}
,m\right)}^{2} \left(\mathcal{I}\right)$, $G\left(t\right)\in
L_{W\left(t,l_{\lambda } ,m\right)}^{2} \left(\mathcal{I}\right)$
are $\mathcal{H}^{r} $-valued functions with compact support. We
have
\begin{equation} \label{GEQ__72_}
\left(\mathcal{R}_{\lambda } F,G\right)_{L_{W\left(t,l_{\lambda }
,m\right)}^{2} \left(\mathcal{I}\right)}
=\left(F,\mathcal{R}_{\bar{\lambda }}
,G\right)_{L_{W\left(t,l_{\bar{\lambda }} ,m\right)}^{2}
\left(\mathcal{I}\right)} .
\end{equation}
since $M\left(\lambda \right)=M^{*} \left(\bar{\lambda }\right)$.
Due to inequalities \eqref{GEQ__70_}, \eqref{GEQ__71_}
equality \eqref{GEQ__69_} is valid for $F\left(t\right),\,
G\left(t\right)$ with non-compact support.

Now it follows from, \eqref{GEQ__31_}, \eqref{GEQ__72_}
that $\forall f\left(t\right),g\left(t\right)\in H$
\begin{multline*}
m\left[R\left(\lambda
\right)f,g\right]-m\left[f,R\left(\bar{\lambda
}\right)g\right]=\left(\mathcal{R}_{\lambda }
F\left(t,l_{\bar{\lambda }} ,m\right),G\left(t,l_{\lambda }
,m\right)\right)_{L_{W\left(t,l_{\lambda } ,m\right)}^{2}
\left(\mathcal{I}\right)} -\\
-\left(F\left(t,l_{\bar{\lambda }}
,m\right),\mathcal{R}_{\bar{\lambda }} G\left(t,l_{\lambda }
,m\right)\right)_{L_{W\left(t,l_{\bar{\lambda }} ,m\right)}^{2}
\left(\mathcal{I}\right)} =0
\end{multline*}
Thus the closure of
the operator $R\left(\lambda \right)f$ in $L_{m}^{2}
\left(\mathcal{I}\right)$ possesses property \eqref{GEQ__64_}.

Since in view of \eqref{GEQ__66_} for any
$f\left(t\right),g\left(t\right)\in H$
$$\left(R\left(\lambda \right)f,g\right)_{L_{m}^{2} \left(\alpha
,\beta \right)} \to \left(R\left(\lambda
\right)f,g\right)_{L_{m}^{2} \left(\mathcal{I}\right)}\text{ as
}\left(\alpha ,\beta \right)\uparrow \mathcal{I}$$ uniformly in
$\lambda $ from any compact set from $\mathbb{C}\setminus
\mathbb{R}^1$, we see that, in view of the analyticity of the
operator function $M\left(\lambda \right)$ and vector-function
$W\left(t,l_{\bar{\lambda }} ,m\right)\, F\left(t,l_{\bar{\lambda
}} \right)$ (see {\eqref{GEQ__251_}} with $l=l_\lambda)$ the
operator $R\left(\lambda \right)$ depends analytically on the
non-real $\lambda $ in view of \cite[p. 195]{Kato}. Theorem
\ref{th4} is proved.
\end{proof}

For $r=1$, $n_\lambda[y]=H_\lambda(t)y$ Theorem \ref{th4} is known
\cite{Khrab4}.

Let us notice that if
$L_m^2(\mathcal{I})=\mathop{L_m^2}\limits^{\circ\ }(\mathcal{I})$
then Theorem \ref{th4} is valid with $f(t)\in \overset{\circ}{H}$
instead of $f(t)\in H$ and without condition \eqref{GEQ__55_}
with $P=I_r$ for infinite $\mathcal{I}$.

The following theorem establishes a relationship between the
resolvents $R(\lambda)$ that are given by Theorem \ref{th4} and
the boundary value problems for equation \eqref{GEQ__1_},
\eqref{GEQ__2_} with boundary conditions depending on the
spectral parameter. Similarly to the case $n_{\lambda }
\left[y\right]\equiv 0$ \cite{Khrab6} we see that the pair
$\left\{y,f\right\}$ satisfies the boundary conditions that
contain both $y$ derivatives and $f$ derivatives of corresponding
orders at the ends of the interval.

\begin{theorem}\label{th5}
Let the interval $\mathcal{I}=\left(a,b\right)$ be finite and
condition \eqref{GEQ__55_} with $P=I_{r} $ holds.

Let the operator-functions $\mathcal{M}_{\lambda },
\mathcal{N}_{\lambda} \in B\left(\mathcal{H}^{r} \right)$ depend
analytically on the non-real $\lambda $,
\begin{equation} \label{GEQ__70_}
\mathcal{M}_{\bar{\lambda }}^{*} \left[\Re Q\left(a,l_{\lambda }
\right)\right]\mathcal{M}_{\lambda } =\mathcal{N}_{\bar{\lambda
}}^{*} \left[\Re Q\left(b,l_{\lambda }
\right)\right]\mathcal{N}_{\lambda } \quad \left(\Im  \lambda \ne
0\right),
\end{equation}
where $Q\left(t,l_{\lambda } \right)$ is the coefficient of
equation \eqref{GEQ__54_} corresponding by Theorem \ref{th1}
to equation \eqref{GEQ__1_},
\begin{equation} \label{GEQ__71_}
\left\| \mathcal{M}_{\lambda } h\right\| +\left\|
\mathcal{N}_{\lambda } h\right\| >0\quad \left(0\ne h\in
\mathcal{H}^{r} ,\, \Im \lambda \ne 0\right),
\end{equation}
the lineal $\left\{\mathcal{M}_{\lambda } h\oplus
\mathcal{N}_{\lambda } h\left|h\in \mathcal{H}^{r} \right.
\right\}\subset \mathcal{H}^{2r} $ is a maximal
$\mathcal{Q}$-nonnegative subspace if $\Im  \lambda \ne 0$, where
$\mathcal{Q}=\left(\Im \lambda \right)\mathrm{diag} \left(\Re
Q\left(a,l_{\lambda } \right),\, -\Re Q\left(b,l_{\lambda }
\right)\right)$ (and therefore
\begin{equation} \label{GEQ__72_}
\Im  \lambda \left(\mathcal{N}_{\lambda }^{*} \left[\Re
Q\left(b,l_{\lambda } \right)\right]\mathcal{N}_{\lambda }
-\mathcal{M}_{\lambda }^{*} \left[\Re Q\left(a,l_{\lambda }
\right)\right]\mathcal{M}_{\lambda } \right)\le 0\quad \left.
\left(\Im  \lambda \ne 0\right)\right).
\end{equation}

Then

1$^\circ$. For any $f\left(t\right)\in H$ the boundary problem
that is obtained by adding the boundary conditions
\begin{equation} \label{GEQ__73_}
\exists h=h\left(\lambda ,f\right)\in \mathcal{H}^{r} :\, \,
\vec{y}\left(a,l_\lambda ,m,f\right)=\mathcal{M}_{\lambda } h,\, \,
\, \vec{y}\left(b,l_\lambda ,m,f\right)=\mathcal{N}_{\lambda }
h\end{equation} to the equation \eqref{GEQ__1_}, where
$\vec{y}\left(t,l_\lambda ,m,f\right)$ is defined by
\eqref{GEQ__25_}, has the unique solution $R\left(\lambda
\right)f$ in $C^r(\bar{\mathcal{I}},\mathcal{H})$ as $\Im  \lambda
\ne 0$. It is generated by the resolvent $R\left(\lambda \right)$
that is constructed, as in Theorem \ref{th4}, using the characteristic operator
\begin{equation} \label{GEQ__74_}
M\left(\lambda \right)=-\frac{1}{2} \left(X_{\lambda }^{-1}
\left(a\right)\mathcal{M}_{\lambda } +X_{\lambda }^{-1}
\left(b\right)\mathcal{N}_{\lambda } \right)\left(X_{\lambda
}^{-1} \left(a\right)\mathcal{M}_{\lambda } -X_{\lambda }^{-1}
\left(b\right)\mathcal{N}_{\lambda } \right)^{-1} \,
\left(iG\right)^{-1} ,
\end{equation}
where
$$\left(X_{\lambda }^{-1} \left(a\right)\mathcal{M}_{\lambda
} -X_{\lambda }^{-1} \left(b\right)\mathcal{N}_{\lambda }
\right)^{-1} \in B\left(\mathcal{H}^{r} \right)\quad \left(\Im
\lambda \ne 0\right),$$
$X_\lambda(t)$ is an operator solution of
the homogeneous equation \eqref{GEQ__54_} such that
$X_{\lambda } \left(0\right)=I_{r} $.

2$^\circ$. For any operator $R(\lambda)$ from Theorem \ref{th4}
vector-function $R(\lambda)f$ $(f\in H)$ is a solution of some
boundary problem as in 1$^\circ$.
\end{theorem}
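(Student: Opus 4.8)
The plan is to pass, via Theorem \ref{th1}, from the boundary value problem for \eqref{GEQ__1_} to a two-point problem for the canonical system \eqref{GEQ__54_}: since $\vec y(t,l_\lambda,m,f)$ is precisely the solution of \eqref{GEQ__54_} attached to $y$, the conditions \eqref{GEQ__73_} read $x(a)=\mathcal M_\lambda h$, $x(b)=\mathcal N_\lambda h$ for the system solution $x(t)=\vec y(t,l_\lambda,m,f)$. I would then show that the operator $M(\lambda)$ of \eqref{GEQ__74_} is a genuine characteristic operator of \eqref{GEQ__5_}; once this is established, Theorem \ref{th4} produces $R(\lambda)$, its value $R(\lambda)f$ solves \eqref{GEQ__1_}, and the boundary conditions are read off from the endpoint values of the kernel solution \eqref{GEQ__62_}.

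For the latter, write $A=X_\lambda^{-1}(a)\mathcal M_\lambda$, $B=X_\lambda^{-1}(b)\mathcal N_\lambda$ and $v=\int_{\mathcal I}X_{\bar\lambda}^*(s)W(s,l_{\bar\lambda},m)F(s,l_{\bar\lambda},m)\,ds$. A one-line manipulation of \eqref{GEQ__74_} gives $M(\lambda)-\tfrac12(iG)^{-1}=-A(A-B)^{-1}(iG)^{-1}$ and $M(\lambda)+\tfrac12(iG)^{-1}=-B(A-B)^{-1}(iG)^{-1}$. Evaluating \eqref{GEQ__62_} at $t=a$ (where $\operatorname{sgn}(s-a)=+1$) and at $t=b$ (where $\operatorname{sgn}(s-b)=-1$) and using $X_\lambda(0)=I$ then yields $x_\lambda(a,F)=\mathcal M_\lambda h$ and $x_\lambda(b,F)=\mathcal N_\lambda h$ with $h=h(\lambda,f)=-(A-B)^{-1}(iG)^{-1}v$, which is exactly \eqref{GEQ__73_} and simultaneously exhibits the required vector $h$.

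It remains to certify $M(\lambda)$. First, the inverse in \eqref{GEQ__74_} must lie in $B(\mathcal H^r)$: if $(A-B)h=0$, then $x(t)=X_\lambda(t)Ah$ is a homogeneous solution of \eqref{GEQ__54_} with $x(a)=\mathcal M_\lambda h$, $x(b)=\mathcal N_\lambda h$, and the Lagrange identity $\Im\lambda\,(U[x(b)]-U[x(a)])=2(\Im\lambda)^2\int_a^b\bigl(W(t,l_\lambda,-\Im l_\lambda/\Im\lambda)x,x\bigr)\,dt\ge0$ combined with \eqref{GEQ__72_} forces this integral to vanish; then $Ah=0$ by the uniform positivity \eqref{GEQ__56_} (with $P=I_r$, valid for every $\lambda$), so $\mathcal M_\lambda h=\mathcal N_\lambda h=0$ and $h=0$ by \eqref{GEQ__71_}. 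Bounded invertibility (surjectivity) is then supplied by the maximality of the $\mathcal Q$-nonnegative lineal through the Krein-space machinery of \cite{Azizov,Khrab5}. Second, for the symmetry $M(\lambda)=M^*(\bar\lambda)$ I set $\mathcal P(\lambda)=iM(\lambda)G+\tfrac12 I$ as in \eqref{13} and eliminate $\Re Q(a),\Re Q(b)$ via \eqref{GEQ__47+_}; condition \eqref{GEQ__70_} then becomes $(I-\mathcal P(\bar\lambda))^*G(I-\mathcal P(\lambda))=\mathcal P(\bar\lambda)^*G\mathcal P(\lambda)$, i.e. $G\mathcal P(\lambda)+\mathcal P(\bar\lambda)^*G=G$, which is precisely $M(\lambda)=M^*(\bar\lambda)$. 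Finally, on the finite interval the left-hand side of the defining inequality \eqref{GEQ__47++_} equals $\Im\lambda\,\bigl((\mathcal N_\lambda^*\Re Q(b)\mathcal N_\lambda-\mathcal M_\lambda^*\Re Q(a)\mathcal M_\lambda)h,h\bigr)\le0$ by \eqref{GEQ__72_}. Thus $M(\lambda)$ is a characteristic operator, \eqref{GEQ__55_} holds by hypothesis, and Theorem \ref{th4} applies.

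Uniqueness in $1^\circ$ follows from the same vanishing argument: the difference of two $C^r$ solutions gives a homogeneous $w$ with $\vec w(a)=\mathcal M_\lambda h'$, $\vec w(b)=\mathcal N_\lambda h'$, forcing $w\equiv0$. For $2^\circ$ I would reverse the construction: given a characteristic operator $M(\lambda)$ as in Theorem \ref{th4}, put $\mathcal P(\lambda)=iM(\lambda)G+\tfrac12 I$, $\mathcal M_\lambda=-X_\lambda(a)(I-\mathcal P(\lambda))$, $\mathcal N_\lambda=X_\lambda(b)\mathcal P(\lambda)$ and $h=(iG)^{-1}v$; reading \eqref{GEQ__62_} at the endpoints as above shows $R(\lambda)f$ solves \eqref{GEQ__1_},\eqref{GEQ__73_} with these data. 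The hypotheses are checked in reverse: \eqref{GEQ__71_} holds since $\mathcal M_\lambda h=\mathcal N_\lambda h=0$ forces $(I-\mathcal P)h=\mathcal P h=0$; \eqref{GEQ__70_} follows from $M(\lambda)=M^*(\bar\lambda)$ through the identity $G\mathcal P(\lambda)+\mathcal P(\bar\lambda)^*G=G$; and the maximal $\mathcal Q$-nonnegativity follows from \eqref{GEQ__47++_}, which gives $\Im\lambda\,\bigl((\mathcal M_\lambda^*\Re Q(a)\mathcal M_\lambda-\mathcal N_\lambda^*\Re Q(b)\mathcal N_\lambda)h,h\bigr)\ge0$ on the dense set of $h$ of the form $(iG)^{-1}v$, hence everywhere by continuity, maximality again coming from \cite{Azizov,Khrab5}. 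The main obstacle throughout is this infinite-dimensional Krein-space step guaranteeing bounded invertibility and maximality; the rest is the endpoint kernel computation together with the algebraic equivalence of \eqref{GEQ__70_} and $M(\lambda)=M^*(\bar\lambda)$.
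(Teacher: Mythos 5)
Your proposal is correct and follows essentially the same route as the paper, whose own proof is a one-line reduction: pass to the system \eqref{GEQ__54_} by Theorem \ref{th1}, apply Theorem \ref{th4}, and invoke \cite[Remark 1.1]{Khrab5} for the correspondence between boundary conditions given by a maximal $\mathcal{Q}$-nonnegative lineal and characteristic operators of the form \eqref{GEQ__74_}. Your endpoint evaluation of the kernel \eqref{GEQ__62_}, the injectivity argument for $X_{\lambda }^{-1}(a)\mathcal{M}_{\lambda }-X_{\lambda }^{-1}(b)\mathcal{N}_{\lambda }$ via the Lagrange identity combined with \eqref{GEQ__55_}, \eqref{GEQ__71_} and \eqref{GEQ__72_}, the equivalence of \eqref{GEQ__70_} with $M(\lambda )=M^{*}(\bar{\lambda })$ through \eqref{GEQ__47+_}, and the verification of \eqref{GEQ__47++_} correctly reconstruct the content of that cited remark, while the two genuinely hard points you defer (bounded invertibility in $1^{\circ }$ and maximality of the nonnegative lineal in $2^{\circ }$) are exactly the Krein-space steps the paper also outsources to \cite{Azizov,Khrab5}.
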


Let us notice that if $f\left(t\right)\mathop{=}\limits^{H}
g\left(t\right)$ then in boundary conditions \eqref{GEQ__73_}:
$\vec{y}\left(t,l,m,f\right)=\vec{y}\left(t,l,m,g\right)$ in view
of \eqref{GEQ__25_} and Proposition \ref{prop1}.
\begin{proof}
The proof of Theorem \ref{th5} follows from Theorems \ref{th1},
\ref{th4} and from \cite[Remark 1.1]{Khrab5}.
\end{proof}

For the case $n_{\lambda } \left[y\right]\equiv 0$, Theorem
\ref{th5} is known \cite{Khrab5}. 

The example below show that the following is possible: for some
resolvent $R\left(\lambda \right)$ from Theorem \ref{th4} $\exists
f_{0} \left(t\right)\mathop{\ne }\limits^{H} 0$ such that
$m\left[f_{0} \right]=0$ and therefore the "resolvent" equation
\eqref{GEQ__1_} for $R\left(\lambda \right)f_{0} $
 is homogeneous but $R\left(\lambda
\right)f_0\mathop{\ne }\limits^{H} 0,\, \, \, \Im \lambda \ne 0$.

\begin{example}\label{ex2}
Let $m$ in \eqref{GEQ__1_} be such expression that equation
$m\left[f\right]=0$ has a solution $f_{0}
\left(t\right)\mathop{\ne }\limits^{H} 0$. Let in Theorem
\ref{th5}: $\mathcal{M}_{\lambda } =\left(\begin{array}{cc} {I_{n}
} & {0} \\ {0} & {0}
\end{array}\right),\, \, \mathcal{N}_{\lambda }
=\left(\begin{array}{cc} {0} & {I}_n \\ {0} & {0}
\end{array}\right)$, $R\left(\lambda \right)$ is the corresponding
resolvent. Then $R(\lambda)f_0\not= 0,\Im\lambda\not= 0$, while if
$\mathcal{M}_{\lambda } =\left(\begin{array}{cc} {0} & {0} \\
{I_{n} } & {0}
\end{array}\right),\, \, \mathcal{N}_{\lambda }
=\left(\begin{array}{cc} {0} & {0} \\ {0} & {I_{n} }
\end{array}\right)$ then for the corresponding resolvent
$R\left(\lambda \right)f_{0} \mathop{=}\limits^{H} 0,\, \, \Im
\lambda \ne 0$ (and therefore in view of \cite[p. 87]{DSnoo1}
$E_\infty f_0=0$ for generalized spectral family $E_\mu$, which
corresponds to $R(\lambda)$ by \eqref{GEQ__3_}).

It is known \cite[p.86]{DSnoo1} that the operator-function
$R\left(\lambda \right)$ \eqref{GEQ__64_}-\eqref{GEQ__66_}
can be represented in the form
\begin{equation} \label{GEQ__78_}
R\left(\lambda \right)=\left(T\left(\lambda \right)-\lambda \right)^{-1} ,
\end{equation}
where $T\left(\lambda \right)$ is such linear relation that
$$\Im
T\left(\lambda \right)\le 0\, \, \left(\max \right),\, \,
T\left(\bar{\lambda }\right)=T^{*} \left(\lambda \right),\, \,
\lambda \in \mathbb{C}^{+} ,$$ the Cayley transform $C_{\mu }
\left(T\left(\lambda \right)\right)$ defines a holomorphic
function in $\lambda \in \mathbb{C}_{+} $ for some (and hence for
all) $\mu \in \mathbb{C}_{+} $. Applications of abstract relations
of $T(\lambda)$ type (Nevanlinna families) to the theories of
boundary relations and of generalized resolvents are proposed in
\cite{DHMdS1,DHMdS2}
\end{example}

The description of $T\left(\lambda \right)$ corresponding to
$R\left(\lambda \right)$ from Theorem \ref{th4} in regular case
gives

\begin{corollary} Let $\mathcal{I}$ be finite and condition
\eqref{GEQ__55_} with $P=I_{r} $ holds. Let us consider the
relation $T\left(\lambda \right)=\overline{T'\left(\lambda
\right)}$ as $\Im \lambda \ne 0$, where
\begin{multline}\label{GEQ__782+_}
T'\left(\lambda
\right)=\bigg\{\left.\left\{\tilde{y}\left(t\right),
\tilde{f}\left(t\right)\right\}\right|\tilde{y}\left(t\right)\mathop{=}\limits^{L_{m}^{2}
\left(\mathcal{I}\right)} y\left(t\right)\in C^{r}
\left(\bar{\mathcal{I}}\right),
\tilde{f}\left(t\right)\mathop{=}\limits^{L_{m}^{2}
\left(\mathcal{I}\right)} f\left(t\right)\in H,\left(l-n_{\lambda
}
\right)\left[y\right]=m\left[f\right],\\\vec{y}\left(t,l-n_{\lambda
} ,m,f\right)\text{satisfy boundary condition}
\\\exists h=h\left(\lambda ,f\right)\in
\mathcal{H}^{r} :\vec{y}\left(a,l-n_{\lambda }
,m,f\right)=\mathcal{M}_{\lambda } h,\, \,
\vec{y}\left(b,l-n_{\lambda } ,m,f\right)=\mathcal{N}_{\lambda }
h,\\\text{ where operators
}\mathcal{M}_\lambda,\mathcal{N}_\lambda\text{ satisfy the
conditions of Theorem \ref{th5},}\\
\vec{y}\left(t,l-n_{\lambda } ,m,f\right)\mathop{=}\limits^{def}
\vec{y}\left(t,l_{\lambda } ,m,f\right)\left|_{m=0\, in\,
l_{\lambda } } \right. =\\
=\begin{cases} \left(\sum\limits _{j=0}^{n-1}\oplus
y^{\left(j\right)} \left(t\right) \right)\oplus \sum\limits
_{j=1}^{n}\oplus  \left(y^{\left[s'-j\right]}
\left(t\left|l-n_{\lambda } \right. \right)-f^{\left[s-j\right]}
\left(t\left|m\right. \right)\right),& r=2n \\
\left(\sum\limits _{j=0}^{n-1}\oplus y^{\left(j\right)}
\left(t\right) \right)\oplus \left(\sum\limits _{j=1}^{n}\oplus
\left(y^{\left[r-j\right]} \left(t\left|l-n_{\lambda } \right.
\right)-f^{\left[s-j\right]} \left(t\left|m\right.
\right)\right)\right)\oplus \left(-iy^{\left(n\right)}
\left(t\right)\right),& r=2n+1>1
\\\qquad \big(\text{here }s'=\text{order of expression }
l-n_{\lambda },\\\qquad\ y^{[0]}(t|l-n_\lambda)=-{i\over 2}
\left(q_1(t,\lambda)-\tilde q_1(t,\lambda)\right)y\text{ as }s'=1
\\\qquad\ y^{\left[k'\right]} \left(t\left|l-n_{\lambda }
\right. \right)\equiv 0\text{ as }k'<\left[\frac{s'}{2} \right],\
f^{\left[k\right]} \left(t\left|m\right.
\right)\equiv 0\text{ as }k<\frac{s}{2} \big) \\
y\left(t\right),& r=1
\end{cases}\bigg\}.
\end{multline}

Then

1${}^\circ$. $\left(T\left(\lambda \right)-\lambda \right)^{-1} $
is equal to resolvent $R(\lambda)$ \eqref{GEQ__62_},
\eqref{GEQ__63_} from Theorem \ref{th4} corresponding to characteristic operator
$M\left(\lambda \right)$ \eqref{GEQ__74_}.

2${}^\circ$. Let $R\left(\lambda \right)$ be resolvent
\eqref{GEQ__62_}, \eqref{GEQ__63_} from Theorem \ref{th4}.
Then $R\left(\lambda \right)=\left(T\left(\lambda \right)-\lambda
\right)^{-1} $, where $T\left(\lambda \right)$ is some relation as
in item 1$^\circ$.

\end{corollary}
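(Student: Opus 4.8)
The plan is to deduce the corollary directly from Theorem \ref{th5} together with the abstract resolvent representation \eqref{GEQ__78_} recorded in Example \ref{ex2}; the entire content is the identification of $(T(\lambda)-\lambda)^{-1}$ with the boundary value problem whose solution is produced by $R(\lambda)$. First I would unwind the definition of the inverse relation: a pair $\{g,u\}$ lies in $(T'(\lambda)-\lambda)^{-1}$ exactly when $\{u,g+\lambda u\}\in T'(\lambda)$, that is, when $(l-n_\lambda)[u]=m[g+\lambda u]$ and $\vec{y}(t,l-n_\lambda,m,g+\lambda u)$ satisfies the boundary conditions of \eqref{GEQ__782+_}. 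Since $m$ is linear, the differential equation rewrites as $l_\lambda[u]=m[g]$, which is precisely the resolvent equation \eqref{GEQ__1_}. Thus the map $g\mapsto u$ underlying $(T'(\lambda)-\lambda)^{-1}$ is governed by the same equation as $R(\lambda)$.

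The step requiring the most care is matching the boundary data. I would compare $\vec{y}(t,l-n_\lambda,m,g+\lambda u)$ with the vector $\vec{y}(t,l_\lambda,m,g)$ entering the boundary conditions \eqref{GEQ__73_} of Theorem \ref{th5}. Via \eqref{GEQ__25_} the plain-derivative components coincide, and (taking $r=2n$; the odd case only appends the component $-iu^{(n)}$, which is insensitive to the choice of expression) the quasi-derivative components agree once one establishes
\[
u^{[r-j]}(t|l_\lambda)=u^{[r-j]}(t|l-n_\lambda)-\lambda\,u^{[s-j]}(t|m),\qquad j=1,\dots,n.
\]
This identity follows from the linear dependence of the quasi-derivatives \eqref{GEQ__10_}--\eqref{GEQ__12_} on the coefficients, applied to $l_\lambda=(l-n_\lambda)-\lambda m$, combined with Lemma \ref{lm2}, which identifies the order-$r$ quasi-derivatives of $m$ regarded as a formally order-$r$ expression with $u^{[s-j]}(t|m)$ and annihilates the surplus ones. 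Substituting it and using $(g+\lambda u)^{[s-j]}(t|m)=g^{[s-j]}(t|m)+\lambda u^{[s-j]}(t|m)$ gives $\vec{y}(t,l-n_\lambda,m,g+\lambda u)=\vec{y}(t,l_\lambda,m,g)$, so the two boundary value problems are literally the same.

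It then remains to assemble the pieces. For item $1^\circ$, item $1^\circ$ of Theorem \ref{th5} asserts that the boundary problem attached to the characteristic operator \eqref{GEQ__74_} has the unique solution $R(\lambda)g$; hence on $H$ the relation $(T'(\lambda)-\lambda)^{-1}$ has the same graph as the operator $R(\lambda)$ of Theorem \ref{th4}. Because $R(\lambda)$ is bounded and everywhere defined by \eqref{GEQ__66_}, inversion commutes with passage to the closure, so $(T(\lambda)-\lambda)^{-1}=(\overline{T'(\lambda)}-\lambda)^{-1}=R(\lambda)$. For item $2^\circ$ I would run the correspondence in reverse: given an arbitrary $R(\lambda)$ from Theorem \ref{th4}, item $2^\circ$ of Theorem \ref{th5} supplies operators $\mathcal{M}_\lambda,\mathcal{N}_\lambda$ satisfying the hypotheses of that theorem for which $R(\lambda)f$ solves the boundary problem \eqref{GEQ__73_}; feeding these operators into the definition \eqref{GEQ__782+_} of $T'(\lambda)$ and invoking the already-proved item $1^\circ$ yields $R(\lambda)=(T(\lambda)-\lambda)^{-1}$.

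The principal obstacles I anticipate are exactly the two technical points above: establishing the quasi-derivative identity (so that the boundary conditions on $\vec{y}(t,l-n_\lambda,m,g+\lambda u)$ and $\vec{y}(t,l_\lambda,m,g)$ truly coincide, including the bookkeeping for $s<r$ via Lemma \ref{lm2}), and justifying that taking the inverse relation commutes with closing $T'(\lambda)$, which rests on the boundedness and everywhere-definedness of $R(\lambda)$ guaranteed by \eqref{GEQ__66_}. Everything else is a formal translation between the relation $T(\lambda)$ and the resolvent $R(\lambda)$ already carried out in Theorem \ref{th5} and Example \ref{ex2}.
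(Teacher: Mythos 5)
Your proposal is correct and follows essentially the same route as the paper: the paper's own proof is precisely the citation of \eqref{GEQ__25_}, Lemma \ref{lm2} and Theorem \ref{th5} (plus an external remark from \cite{Khrab5}), and your argument fleshes out exactly these ingredients — the quasi-derivative identity $\vec{y}(t,l-n_\lambda,m,g+\lambda u)=\vec{y}(t,l_\lambda,m,g)$ via linearity of the upper quasi-derivatives in the coefficients together with Lemma \ref{lm2}, and then the transfer through Theorem \ref{th5} with a closure argument (which legitimately substitutes for the cited Remark 1.1 of \cite{Khrab5}, since $(\bar S)^{-1}=\overline{S^{-1}}$ holds for any relation and boundedness from \eqref{GEQ__66_} identifies $\overline{R(\lambda)|_H}$ with $R(\lambda)$).
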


\begin{proof}
The proof follows from \eqref{GEQ__25_}, Lemma \ref{lm2},
Theorem \ref{th5} and Remark 1.1 from \cite{Khrab5}.
\end{proof}

Let in \eqref{GEQ__1_}, \eqref{GEQ__2_} $n_{\lambda }
\left[y\right]\equiv 0$ l.e. $l_{\lambda } =l-\lambda m$, where
$l=l^{*} ,\, m=m^{*} $ and coefficients of expressions $m$ satisfy
condition \eqref{GEQ__52_}.

We consider in $L_{m}^{2} \left(\mathcal{I}\right)$ the linear
relation
\begin{multline} \label{GEQ__781_}
\mathcal{L}'_{0} =\bigg\{
\left\{\tilde{y}\left(t\right),\tilde{g}\left(t\right)\right\}
|\tilde{y}\left(t\right)\mathop{=}\limits^{L_{m}^{2}
\left(\mathcal{I}\right)}
y\left(t\right),\tilde{g}\left(t\right)\mathop{=}\limits^{L_{m}^{2}
\left(\mathcal{I}\right)} g\left(t\right),y\left(t\right)\in C^{r}
\left(\bar{\mathcal{I}},\mathcal{H}\right),g\left(t\right)\in
H,l\left[y\right]=m\left[g\right],\\
\vec{y}\left(t,l,m,g\right)\text{ is equal to zero in the edge of
}\mathcal{I}\text{ if this edge is finite and }
\vec{y}\left(t,l,m,g\right)\\\text{ is equal to zero in the some
neighbourhood of the edge of }\mathcal{I}\text{ if this edge is
infinite }\bigg\}
\end{multline}\footnote{Let us notice that
vector-function $g\left(t\right)$ in \eqref{GEQ__781_} may be
non-equal to zero in the finite edge or in the some neighbourhood
of infinite edge of $\mathcal{I}$.} where
$\vec{y}\left(t,l,m,g\right)$ is defined by
{\eqref{GEQ__782+_}} with $l_{\lambda}=l-\lambda m$, $f=g$.

Below we assume that relation $\mathcal{L}_0^\prime$ consists of
the pairs of $\{y,g\}$ type.

The relation $\mathcal{L}'_{0} $ is symmetric due to the following
Green formula with $\lambda_k=0$:

Let $y_{k} \left(t\right)\in C^{r} \left(\left[\alpha ,\beta
\right],\mathcal{H}\right)$, $f_{k} \left(t\right)\in C^{s}
\left(\left[\alpha ,\beta \right],\mathcal{H}\right)$, $\lambda
_{k} \in \mathbb{C}$, $l\left[y_{k} \right]-\lambda _{k}
m\left[y_{k} \right]=m\left[f_{k} \right],\, \, k=1,2$. Then
\begin{multline} \label{GEQ__782_}
\int _{\alpha }^{\beta }m\left\{f_{1} ,y_{2} \right\}dt -\int
_{\alpha }^{\beta }m\left\{y_{1} ,f_{2} \right\}dt +\left(\lambda
_{1} -\bar{\lambda }_{2} \right)\int _{\alpha }^{\beta
}m\left\{y_{1} ,y_{2} \right\}dt =
\\
\left.=i\left(\Re Q\left(t,l_{\lambda } \right)\vec{y}_{1}
\left(t,l_{\lambda _{1} } ,m,f_{1} \right),\vec{y}_{2}
\left(t,l_{\lambda _{2} } ,m,f_{2}
\right)\right)\right|_\alpha^\beta,
\end{multline}
where $\vec{y}_{k} \left(t,l_{\lambda _{k} } ,m,f_{k} \right)$ for
$\lambda _{k} \in \mathbb{R}^{1} $ is defined by
{\eqref{GEQ__782+_}} with $l_\lambda=l-\lambda m$.

This formula is a corollary of Theorem \ref{th3} if $\Im \lambda
_{k} \ne 0$. For its proof for example in the case $\lambda _{1}
\in \mathbb{R}^{1} $ we need to modify \eqref{GEQ__36_} for
equation $l\left[y_{1} \right]-\left(\lambda _{1} +i\varepsilon
\right)m\left[y\right]=m\left[f_{1} -i\varepsilon y_{1} \right]$
and then to pass to limit in \eqref{GEQ__36_} as $\varepsilon
\to +0$.

In general the relation $\mathcal{L}'_{0} $ is not closed. We
denote $\mathcal{L}_{0} =\bar{\mathcal{L}}_{0}^\prime $.

\begin{theorem}\label{th6}
Let $l_{\lambda } =l-\lambda m$ and the conditions of Theorem
\ref{th4} hold. Then the operator $R\left(\lambda \right)$ from
Theorem \ref{th4} is the generalized resolvent of the relation
$\mathcal{L}_{0} $. Let $\mathcal{I}$ be finite and additionally
the condition \eqref{GEQ__55_} hold. Then every such
generalized resolvent can be constructed as the operator
$R\left(\lambda \right)$.
\end{theorem}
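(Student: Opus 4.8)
The plan is to treat the two assertions separately, and for the direct one the starting point is that $R(\lambda)$ already satisfies properties $1^\circ$--$3^\circ$ of Theorem \ref{th4}. Hence, by \cite{DSnoo1} (as recorded in Example \ref{ex2} and \eqref{GEQ__78_}), it admits the representation \eqref{GEQ__3_} and, equivalently, $R(\lambda)=(T(\lambda)-\lambda)^{-1}$ for a closed maximal dissipative/accumulative Nevanlinna family $T(\lambda)$ with $T(\bar\lambda)=T^{*}(\lambda)$. By the Straus description of generalized resolvents it then suffices to establish the sandwich $\mathcal{L}_0\subseteq T(\lambda)\subseteq\mathcal{L}_0^{*}$ for every non-real $\lambda$: this says exactly that $T(\lambda)$ is a Straus extension of the relation $\mathcal{L}_0$, which is symmetric by \eqref{GEQ__782_} with $\lambda_k=0$, and it identifies $R(\lambda)$ as its generalized resolvent.

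The inclusion $\mathcal{L}_0\subseteq T(\lambda)$ I would obtain from the extension property $R(\lambda)(g-\lambda y)=y$ for $\{y,g\}\in\mathcal{L}'_0$. To prove it, fix a test function $\phi\in H$, put $v=R(\bar\lambda)\phi$, and apply the Green formula \eqref{GEQ__782_} to the pairs $\{y,g\}$ (with $\lambda_1=0$, $f_1=g$) and $\{v,\phi\}$ (with $\lambda_2=\bar\lambda$, $f_2=\phi$): since $\vec y(t,l,m,g)$ vanishes at the finite ends, or near the infinite ends, by the definition of $\mathcal{L}'_0$, the boundary term is zero, so $m[g,v]-m[y,\phi]-\lambda m[y,v]=0$. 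Using $R^{*}(\lambda)=R(\bar\lambda)$ this reads $(R(\lambda)(g-\lambda y),\phi)_m=(y,\phi)_m$ for all $\phi\in H$, whence $R(\lambda)(g-\lambda y)=y$, i.e. $\{y,g\}\in T(\lambda)$; boundedness of $R(\lambda)$ passes this to the closure $\mathcal{L}_0$. Running the same argument at $\bar\lambda$ gives $\mathcal{L}_0\subseteq T(\bar\lambda)=T^{*}(\lambda)$, and taking adjoints (with $T(\lambda)$ closed, so $T^{**}(\lambda)=T(\lambda)$) yields the second inclusion $T(\lambda)\subseteq\mathcal{L}_0^{*}$. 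This settles the direct assertion for finite and infinite $\mathcal{I}$ alike.

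For the converse, with $\mathcal{I}$ finite and \eqref{GEQ__55_} in force, the plan is to reduce to the known description of generalized resolvents of the first-order canonical system. By Theorem \ref{th1} the equation $l[y]=m[g]$ is equivalent to a system of type \eqref{GEQ__46_} (with $Q=J/i$, $H_\lambda=H(t,l_\lambda)$), and by part 1 of Theorem \ref{th2} the metric $m\{\cdot,\cdot\}$ is transported to the weight pairing $(W\cdot,\cdot)$; this identifies $\mathcal{L}_0$ in $L_m^2(\mathcal{I})$ with the minimal symmetric relation of that system in $L_W^2(\mathcal{I})$ and, correspondingly, their generalized resolvents. Under this identification \eqref{GEQ__55_} becomes \eqref{GEQ__56_}, i.e. $\Delta_{\lambda_0}(\alpha,\beta)\gg 0$ on a subinterval --- precisely the hypothesis \eqref{star8} under which \cite{Khrab5} describes all characteristic operators and shows that every generalized resolvent of the system coincides with some compressed resolvent $\mathcal{R}_\lambda$ of the form \eqref{GEQ__47_}. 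Thus, given an arbitrary generalized resolvent $\widetilde R(\lambda)$ of $\mathcal{L}_0$, I would transport it to the system, extract a characteristic operator $M(\lambda)$ with $\widetilde R\leftrightarrow\mathcal{R}_\lambda$, and read off $\widetilde R(\lambda)=R(\lambda)$ from \eqref{GEQ__62_}--\eqref{GEQ__63_}.

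The hard part will be this transport in the converse direction. Because $m$, and hence $W$, is degenerate, $L_m^2(\mathcal{I})$ and $L_W^2(\mathcal{I})$ are genuine weighted/quotient spaces and the correspondence of Theorems \ref{th1}--\ref{th2} is not a unitary equivalence, so I must verify that it carries generalized resolvents of $\mathcal{L}_0$ bijectively onto those of the canonical system. This forces me to control the null lineal of Proposition \ref{prop1} together with the subspaces $N$, $P$ and the projection $\mathop{P}\limits^{\circ}$, and to check that Naimark minimality and the self-adjointness of the exit-space extension survive the passage, so that the description of \cite{Khrab5} applies verbatim. By contrast the direct assertion is, once the Green formula \eqref{GEQ__782_} and the abstract structure from \cite{DSnoo1} are in hand, essentially relation-algebra bookkeeping.
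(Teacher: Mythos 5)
Your direct half is correct and follows the paper's own route: like the paper, you reduce, via \cite{DSnoo1} and the properties \eqref{GEQ__64_}--\eqref{GEQ__66_}, to the single inclusion $R(\lambda)\left(\mathcal{L}_0-\lambda\right)\subseteq\mathbf{I}$ (your sandwich $\mathcal{L}_0\subseteq T(\lambda)\subseteq\mathcal{L}_0^{*}$ is an equivalent reformulation of this), and you prove that inclusion with the Green formula \eqref{GEQ__782_}. Your execution --- pairing $\{y,g\}\in\mathcal{L}'_0$ against $v=R(\bar{\lambda})\phi$, killing the boundary term by the definition \eqref{GEQ__781_} of $\mathcal{L}'_0$, and then using $R^{*}(\lambda)=R(\bar{\lambda})$ together with density of $H$ --- is a clean, self-contained version of the step the paper delegates to \cite[p. 453]{Khrab5}, so this part stands.

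The converse half, however, has a genuine gap, and it is not a fixable detail: your key citation does not exist. You propose to transport an arbitrary generalized resolvent of $\mathcal{L}_0$ to the first-order system \eqref{GEQ__46_} and then invoke a result of \cite{Khrab5} asserting that every generalized resolvent of that system coincides with a compressed resolvent $\mathcal{R}_\lambda$ of the form \eqref{GEQ__47_}. What \cite{Khrab5} actually contains (under \eqref{star8}) is a description of the characteristic operators themselves; it does not prove that every generalized resolvent of a minimal relation --- of the system or of $\mathcal{L}_0$ --- arises from a characteristic operator. That exhaustion statement is precisely the second assertion of Theorem \ref{th6}, and the introduction of the paper stresses that it is new even in the case $n_\lambda\equiv 0$ (only the direct part is in \cite{Khrab6}); so your plan assumes exactly what has to be proved. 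The paper instead proves it from scratch: Lemmas \ref{lm8}--\ref{lm10} show that for $f\in H$ the function $R_\lambda f$ is a classical solution of \eqref{GEQ__1_}, hence admits the representation \eqref{GEQ__791_} with a unique $h\in N^{\bot}$; the Green formula \eqref{GEQ__782_} and the resolvent inequality then force $h$ to depend only, linearly and boundedly, on $I_\lambda f$, giving $h=M(\lambda)I_\lambda f$ as in \eqref{GEQ__81_}; and one must still establish $M(\lambda)\in B(N^{\bot})$, its strong continuity and analyticity in $\lambda$, the boundary condition \eqref{GEQ__47++_}, and $M(\bar{\lambda})=M^{*}(\lambda)$, before concluding that $M(\lambda)$ is a characteristic operator generating $R_\lambda$ by \eqref{GEQ__62_}--\eqref{GEQ__63_}. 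None of this appears in your sketch, and the ``transport'' difficulty you flag (degenerate $m$, non-unitary correspondence between $L^2_m(\mathcal{I})$ and the weighted space of the system, the subspaces $N$, $N^{\bot}$) is exactly where these arguments live; acknowledging it does not discharge it.
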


\begin{proof}
In view of \cite{DSnoo1} and taking into account properties
\eqref{GEQ__64_}-\eqref{GEQ__66_} of the operator
$R\left(\lambda \right)$ it is sufficiently to prove that
$R\left(\lambda \right)\left(\mathcal{L}_{0} -\lambda
\right)\subseteq \mathbf{I}$, where $\mathbf{I}$ is a graph of the
identical operator in $L_{m}^{2} \left(\mathcal{I}\right)$. But
this proposition is proved similarly to \cite[p. 453]{Khrab5}
taking into account \eqref{GEQ__782_} and the fact that in
view of (\ref{GEQ__78_}) $\overrightarrow{(\tilde y-
y)}(t,l-\lambda m,m,0)=\overrightarrow{\tilde y}(t,l-\lambda
m,m,g-\lambda y)-\vec y(t,l,m,g)$ if $\{y,g\}\in \mathcal{L}'_0$,
$\tilde y=R(\lambda)(g-\lambda y)$.

Conversely let $\mathcal{I}$ be finite, $R_{\lambda } $ a
generalized resolvent of relation $\mathcal{L}_{0} $. We denote
${N}_{\lambda } =\left\{y\left(t\right)\in C^{r}
\left(\mathcal{I},\mathcal{H}\right),\lambda \in \mathcal{B},\,
l\left[y\right]-\lambda m\left[y\right]=0\right\}$. We need the
following

\begin{lemma}\label{lm8}
Let condition \eqref{GEQ__55_} hold. Then the lineal
${N}_{\lambda } $ is closed in $L_{m}^{2}
\left(\mathcal{I}\right)$.
\end{lemma}

\begin{proof}The proof of Lemma \ref{lm8} follows from
\eqref{GEQ__29_}.\end{proof}

\begin{lemma}\label{lm9}Let $\lambda \in \mathcal{B}$. Then
$\overline{R\left(\mathcal{L}'_{0} -\bar{\lambda
}\right)}={N}_{\lambda }^{\bot } $.
\end{lemma}

\begin{proof} Let $x\left(t\right)\in N_{\lambda },
f\left(t\right)\in H,\, y\left(t\right)$ is a solution of the
following Cauchy problem:
\begin{equation} \label{GEQ__78_}
l\left[y\right]-\bar{\lambda }m\left[y\right]=m\left[f\right],\,
\, \, \vec{y}\left(a,l_{\bar{\lambda }} ,m,f\right)=0.
\end{equation}
Then
\begin{equation} \label{GEQ__79_}
m\left[f,x\right]=i\left(\Re Q\left(b,l_{\lambda }
\right)\vec{y}\left(b,l_{\bar{\lambda }}
,m,f\right),\vec{x}\left(b,l_{\lambda } ,m,0\right)\right)
\end{equation}
in view of Green formula \eqref{GEQ__782_}. Therefore
$\overline{R\left(\mathcal{L}'_{0} -\bar{\lambda
}\right)}\subseteq {N}_{\lambda }^{\bot } $.

Let $g\left(t\right)\in {N}_{\lambda }^{\bot } $. Then $\exists \,
\, H\ni g_{n} \mathop{\to }\limits^{L_{m}^{2}
\left(\mathcal{I}\right)} g$, $g_{n} =x_{n} \oplus f_{n} $, $x_{n}
\in {N}_{\lambda } $, $f_{n} \in {N}_{\lambda }^{\bot }
\Rightarrow f_{n} \in H$. Let $y_n$ be a solution of problem
\eqref{GEQ__78_} with $f_{n} $ instead of $f$. In view of
\eqref{GEQ__79_} with $f=f_{n} $, one has: $\vec{y}_{n}
\left(b,l_{\bar{\lambda }} ,m,f_{n} \right)=0\Rightarrow f_{n} \in
R\left(\mathcal{L}'_{0} -\bar{\lambda }\right)$. But $f_{n}
\mathop{\to }\limits^{L_{m}^{2} \left(\mathcal{I}\right)} g$.
Therefore $\overline{R\left(\mathcal{L}'_{0} -\bar{\lambda
}\right)}\supseteq {N}_{\lambda }^{\bot } $. Lemma \ref{lm9} is
proved.
\end{proof}

\begin{lemma}\label{lm10}
Let the condition \eqref{GEQ__55_} hold, $\lambda \in
\mathcal{B}$. Let $\left\{\tilde{y},\tilde{f}\right\}\in
\mathcal{L}_{0}^{*} -\lambda $,
$\tilde{f}\mathop{=}\limits^{L_{m}^{2} \left(\mathcal{I}\right)}
f\in H$. Then $\tilde{y}\mathop{=}\limits^{L_{m}^{2}
\left(\mathcal{I}\right)} y\in C^{r}
\left(\bar{\mathcal{I}},\mathcal{H}\right)$ and $y\left(t\right)$
satisfies the equation \eqref{GEQ__1_}.
\end{lemma}

\begin{proof}
Let $C^{r} \left(\bar{\mathcal{I}},\mathcal{H}\right)\ni y_{0} $
be a solution of \eqref{GEQ__1_}. Let $\left\{\varphi ,\psi
\right\}\in \mathcal{L}'_{0} -\bar{\lambda }$. Then $\vec{\varphi
}\left(a,l_{\bar{\lambda }} ,m,\psi \right)=\vec{\varphi
}\left(b,l_{\bar{\lambda }} ,m,\psi \right)=0$ in view of
\eqref{GEQ__782+_}, \eqref{GEQ__781_}. Hence
$m\left[\varphi ,f\right]=m\left[\psi ,y_{0} \right]$ due to Green
formula \eqref{GEQ__782_}. But $m\left[\varphi
,f\right]=\left(\psi ,\tilde{y}\right)_{L_{m}^{2}
\left(\mathcal{I}\right)} $ in view of the definition of the
adjoint relation. Hence $\left(\psi ,\tilde{y}-y_{0}
\right)\mathop{=}\limits_{L_{m}^{2} \left(\mathcal{I}\right)}0$.
Therefore $\tilde y-y_{0} \mathop{=}\limits^{L_{m}^{2}
\left(\mathcal{I}\right)} y-y_{0} \in {N}_{\lambda } $ in view of
Lemmas \ref{lm8}, \ref{lm9}. Hence
$\tilde{y}\mathop{=}\limits^{L_{m}^{2} \left(\mathcal{I}\right)}
y\in C^{r} \left(\bar{\mathcal{I}},\mathcal{H}\right)$ and $y$ is
a solution of \eqref{GEQ__1_}. Lemma \ref{lm10} is proved.
\end{proof}

We return to the proof of Theorem \ref{th6}.

Let $f\in H$. Then in view of Lemma \ref{lm10} $R_{\lambda }
f\mathop{=}\limits^{L_{m}^{2} \left(\bar{\mathcal{I}}\right)} y\in
C^{r} \left(\bar{\mathcal{I}},\mathcal{H}\right)$ and $y$
satisfies equation \eqref{GEQ__1_}. Therefore taking into
account Theorem \ref{th1}, \cite[p.148]{DalKrein} and
\eqref{GEQ__47+_} we have
\begin{equation} \label{GEQ__791_}
y\left(t\right)= \left[X_{\lambda }
\left(t\right)\right]_{1}\left\{ h-{1\over 2}\left(iG\right)^{-1}
\left(\int_{a}^b sqn(s-t)X_{\bar{\lambda }}^{*}
\left(s\right)W\left(s,l_{\bar{\lambda }}
,m\right)F\left(s,l_{\bar{\lambda }} ,m\right)ds\right)\right\},
\end{equation}
where $\left[X_{\lambda } \left(t\right)\right]_{1} \in
B\left(\mathcal{H}^{r} ,\mathcal{H}\right)$ is the first row of
operator solution $X_{\lambda } \left(t\right)$ from Theorem
\ref{th4}, that is written in the matrix form, $h=h_{\lambda }
\left(f\right)\in N^{\bot } $ is defined in the unique way in view
of \eqref{GEQ__29_} and condition \eqref{GEQ__55_}.

Let us prove that $h$ depends on $I_{\lambda }
f\mathop{=}\limits^{def} \int _{a}^{b}X_{\bar{\lambda }}^{*}
\left(s\right)W\left(s,l_{\bar{\lambda }}
,m\right)F\left(s,l_{\bar{\lambda }} ,m\right)ds $ in unique way.
Operator $\left(I_{\lambda } :H\to N^{\bot } \right)$ in view of
Lemma \ref{lm7}. Moreover $I_{\lambda } N^{\bot } =N^{\bot }$ i.e.
$\forall h_{0} \in N^{\bot } \exists f_{0} \in H:\, h_{0}
=I_{\lambda } f_{0} $. For example we can set
\begin{gather}\label{star}
f_0=f_0(t,\lambda)=\left[X_{\bar{\lambda }}
\left(t\right)\right]_{1} \left\{\Delta _{\bar{\lambda }}
\left(\mathcal{I}\right)\left|_{N^{\bot } } \right. \right\}^{-1}
h_{0}
\end{gather}
and to utilize the equality.
\begin{gather*}
W\left(s,l_{\bar{\lambda }} ,m\right)F_{0} \left(s,l_{\bar{\lambda
}} ,m\right)=W\left(s,l_{\bar{\lambda }} ,m\right)X_{\bar{\lambda
}} \left(s\right)\left\{\ldots \right\}^{-1} h_{0}
\end{gather*}

If $f\left(t\right),g\left(t\right)\in H$ are such functions that
$I_{\lambda } f=I_{\lambda } g$, then in view of
\eqref{GEQ__791_}
\begin{multline} \label{GEQ__80_}
\left.\Im \lambda \left(\left(\Re Q\left(t,l_{\lambda }
\right)\right)\overrightarrow{\Delta y} \left(t,l_{\lambda }
,m,f-g\right),\overrightarrow{\Delta y}\left(t,l_{\lambda }
,m,f-g\right)\right)\right|_\alpha^\beta =
\\
=\left.\Im \lambda \left(\left(\Re Q\left(t,l_{\lambda }
\right)\right)X_{\lambda } \left(t\right)\left(h_{\lambda }
\left(f\right)-h_{\lambda } \left(g\right)\right),X_{\lambda }
\left(t\right)\left(h_{\lambda } \left(f\right)-h_{\lambda }
\left(g\right)\right)\right)\right|_\alpha^\beta,
\end{multline}
where $\Delta y=R_{\lambda } \left[f-g\right]$. But in view of
\eqref{GEQ__782_} the left hand side of \eqref{GEQ__80_}
is nonpositive since $R_{\lambda } $ has property of
\eqref{GEQ__66_} type. The right hand of \eqref{GEQ__80_}
is nonnegative in view of \eqref{GEQ__36_}. Hence $h_{\lambda
} \left(f\right)=  h_{\lambda } \left(g\right)$ in view of
\eqref{GEQ__36_}, \eqref{GEQ__55_}. Thus $h$ depends on
$I_{\lambda } f$ in unique way and obviously in the linear way.
Therefore

\begin{equation} \label{GEQ__81_}
h=M\left(\lambda \right)I_{\lambda } f,
\end{equation}
where $M\left(\lambda \right):N^{\bot } \to N^{\bot } $ is a
linear operator and so $R_\lambda f$ ($f\in H$) can be represented
in the form (\ref{GEQ__63_}).

Further, for definiteness, we will consider the most complicated
case $r=s=2n$.

Let us prove that $M\left(\lambda \right)\in B\left(N^{\bot }
\right),\, \, \Im \lambda \ne 0$. Let $h_{0} \in N^{\bot } ,\, \,
y=R_{\lambda } f_{0} $, where $f_{0} =f_{0} \left(t,\lambda
\right)$ see (\ref{star}). Then in view of (\ref{GEQ__791_})
and Theorem \ref{th1} we have
\begin{equation} \label{GEQ__811_}
X_{\lambda } \left(t\right)M\left(\lambda \right)h_{0}
=Y\left(t,l_{\lambda } ,m\right)-\mathcal{F}_{0}
\left(t,m\right)-{1\over 2}X_{\lambda }
\left(t\right)\left(iG\right)^{-1}\left( I_{\lambda }
\left(a,t\right)- I_{\lambda } \left(t,b\right)\right)F_{0} ,
\end{equation}
where $Y\left(t,l_{\lambda } ,m\right),\, \, F_{0} =F_{0}
\left(t,l_{\bar{\lambda }} ,m\right),\ \mathcal{F}_{0}
\left(t,m\right)$ are defined by \eqref{GEQ__23_},
\eqref{GEQ__32_} correspondingly with $y$ and $f_{0} $
correspondingly instead of $f,\ I_{\lambda } \left(0,t\right)F_{0}
$ is defined by \eqref{GEQ__60+_}. Therefore
\begin{equation} \label{GEQ__812_}
\Delta _{\lambda } \left(a,b\right)M(\lambda) h_{0}
=I_{\bar{\lambda }} y-I_{\bar{\lambda }}
\left(a,b\right)\left(\mathcal{F}_{0} \left(t,m\right)+{1\over
2}X_{\lambda } \left(t\right)\left(iG\right)^{-1}\left( I_{\lambda
} \left(a,t\right)- I_{\lambda } \left(t,b\right)\right)F_{0}
\right),
\end{equation}
where $I_{\bar{\lambda }} y,\, \, I_{\bar{\lambda }}
\left(a,b\right)\left(\ldots \right)\in N^{\bot } $ in view of
\eqref{GEQ__61-_}. But
$$
\forall g\in \mathcal{H}^{r} :\, \, \left|\left(I_{\bar{\lambda }}
y,g\right)\right|\le \mathop{\max }\limits_{t\in
\bar{\mathcal{I}}} \left\| X_{\lambda } \left(t\right)\right\|
\left\{\int _{\mathcal{I}}\left\| W\left(t,l_{\lambda }
,m\right)\right\| dt \right\}^{1/2} \left\| R_{\lambda } f_{0}
\right\| _{L_{m}^{2} \left(\mathcal{I}\right)} \left\| g\right\|$$
in view of Cauchy inequality and \eqref{GEQ__29_}. Therefore
\begin{gather}\label{star2}
\exists\text{ constant }c\left(\lambda \right):\, \, \,
\left|\left(I_{\bar{\lambda }} y,g\right)\right|\le c\left(\lambda
\right)\left\| y\right\| \, \left\| g\right\|
\end{gather}
since
$$\left\| R_{\lambda } f_{0} \right\| _{L_{m}^{2}
\left(\mathcal{I}\right)} \le \left\| \Delta _{\bar{\lambda }}
\left(a,b\right)\right\| ^{1/2} \left\| \left(\Delta
_{\bar{\lambda }} \left(a,b\right)\left|_{N^{\bot } } \right.
\right)^{-1} \right\| {\left\| h_{0} \right\|
\mathord{\left/{\vphantom{\left\| h_{0} \right\|  \left|\Im
\lambda \right|}}\right.\kern-\nulldelimiterspace} \left|\Im
\lambda \right|}$$ in view of \eqref{GEQ__29_}, \eqref{star2}
and inequality: $\left\| R_{\lambda } f_{0} \right\| _{L_{m}^{2}
\left(\mathcal{I}\right)} \le {\left\| f_{0} \right\| _{L_{m}^{2}
\left(\mathcal{I}\right)} \mathord{\left/{\vphantom{\left\| f_{0}
\right\| _{L_{m}^{2} \left(\mathcal{I}\right)}  \left|\Im \lambda
\right|}}\right.\kern-\nulldelimiterspace} \left|\Im \lambda
\right|} $.

Obviously $\left|\left(I_{\bar{\lambda} }
\left(a,b\right)\left(\ldots \right),g\right)\right|$ satisfies
the estimate of type \eqref{star2}. Therefore $M\left(\lambda
\right)\in B(N^{\bot })$.

Now we have to prove that $M(\lambda)$ is a characteristic operator o equation
\eqref{GEQ__51_}.

Let us prove that $M\left(\lambda \right)$ is strogly continuous
for nonreal $\lambda $. To prove this fact it is enough to verify
it for $\Delta _{\lambda } \left(a,b\right)M\left(\lambda
\right)$; while the last one obviously follows from strongly
continuity of vector-function $I_{\bar{\lambda }}R_\lambda
f_0(t,\lambda) $.

In view of \eqref{GEQ__29_} we have $\forall
g\in\mathcal{H}^r$
$$
\left(I_{\bar{\lambda }} R_{\lambda } f_{0} \left(t,\lambda
\right)-I_{{\mu }} R_{\bar{\mu }} f_{0}
\left(t,\mu\right),g\right)=m\left[R_{\lambda } f_{0}
\left(t,\lambda \right),\left[X_{\lambda }
\left(t\right)\right]_{1} g\right]-m\left[R_{\mu } f_{0}
\left(t,\mu \right),\left[X_{\mu } \left(t\right)\right]_{1}
g\right].$$

Then the required statement can be derived from the equality
\begin{multline*}
m\left\{\left[X_{\lambda } \left(t\right)-X_{\mu }
\left(t\right)\right]_{1} g,\, \left[X_{\lambda }
\left(t\right)-X_{\mu } \left(t\right)\right]_{1} g\right\}=\\
=\left(W\left(t,l_{\lambda } ,m\right)\left(\left(X_{\lambda }
\left(t\right)-X_{\mu } \left(t\right)\right)g+\left(\lambda-\mu
\right)\mathcal{F}(t,m)\right),\left(X_{\lambda }
\left(t\right)-X_{\mu } \left(t\right)\right)g+\left(\lambda-\mu
\right)\mathcal{F}(t,m)\right),
\end{multline*}
where $\mathcal{F}(t,m)$ is defined by \eqref{GEQ__32_} with
$f\left(t\right)=\left[X_{\mu } \left(t\right)\right]_{\, 1} g$,
$\left\| X_{\lambda } \left(t\right)-X_{\mu }
\left(t\right)\right\| \underset{\mu \to \lambda}\to 0$ uniformly
in $t\in \left[a,b\right]$. and from the analogous equality for
$m\{f_0(t,\lambda)-f_0(t,\mu),f_0(t,\lambda)-f_0(t,\mu)\}$.

Let us prove that $M\left(\lambda \right)$ is analytic for nonreal
$\lambda $. To prove this fact it is enough in view of strongly
continuity of $M(\lambda)$ to prove the analyticity in $\lambda$
of $\left(I_{\lambda \mu } M\left(\lambda \right)I_{\lambda }
f,g\right)$, where $f\left(t\right)\in C^{r}
\left(\bar{\mathcal{I}},\mathcal{H}\right),\, g\in \mathcal{H}^{r}
$, $(\Im\lambda)(\Im\mu)>0$,
$$I_{\lambda \mu } =\int _{a}^{b}X_{\mu }^{*}
\left(t\right)W\left(t,l_{\mu } ,m\right)X_{\lambda }
\left(t\right) dt\, \in \, B\left(N^{\bot } \right),$$
$I_{\lambda\mu }^{-1} \in B\left(N^{\bot } \right)$ if
$\left|\lambda -\mu \right|$ is sufficiently small. In view of
\eqref{GEQ__812_}, \eqref{GEQ__63_}, Theorem \ref{th1},
\eqref{GEQ__29_}, \eqref{GEQ__251_}, \eqref{GEQ__7_}
we have
\begin{multline}\label{star4}
\left(I_{\lambda \mu } M\left(\lambda \right)I_{\lambda }
f,g\right)=m\left[R_{\lambda } f,\left[X_{\mu }
\left(t\right)\right]_{\, 1} g\right]+\left(\lambda -\mu
\right)\int _{a}^{b}\left(\left(R_{\lambda }
f\right)^{\left[n\right]} \left(t\left|m\right. \right),\,
g^{\left(n\right)} \left(t\right)\right) dt+\\+ \text{terms
independent on }R_\lambda f\text{ and analytic in }\lambda,
\end{multline}
where $g^{\left(n\right)} \left(t\right)\mathop{=}\limits^{def}
\left(p_{n} -\bar{\mu }\tilde{p}_{n} \right)^{-1}
\left(\left[X_{\mu } \right]_{\, 1} g\right)^{\left[n\right]}
\left(t\left|m\right. \right)$.

For scalar or vector-function $F\left(\lambda \right)$ let us
denote $$\Delta _{km} F\left(\lambda \right)=\frac{F\left(\lambda
+\Delta _{k} \lambda \right)-F\left(\lambda \right)}{\Delta _{k}
\lambda } -\frac{F\left(\lambda +\Delta _{m} \lambda
\right)-F\left(\lambda \right)}{\Delta _{m} \lambda }.$$ Let us
denote
$$\mathrm{R}_{n} \left(\lambda \right)=\int
_{a}^{b}\left(\tilde{p}_{n} \left({R}_{\lambda }
f\right)^{\left[n\right]}(t|m) ,g^{\left(n\right)} \right) dt.$$

In view of \eqref{GEQ__11_}, \eqref{GEQ__52_} we have
\begin{gather}
\left|\Delta _{km} \mathrm{R}_{n} \left(\lambda \right)\right|\le
\left(m\left[\Delta _{km} R_{\lambda } f,\, \Delta _{km}
R_{\lambda } f\right]\right)^{1\over 2}\left(\int_a^b(\tilde p_n
g^{(n)},g^{(n)})dt \right)^{1/2}
\end{gather}

Therefore $\mathrm{R}_{n} \left(\lambda \right)$ depends
analytically on nonreal $\lambda $ in view of analyticity of
$R_{\lambda } $ and so analyticity of $M\left(\lambda \right)$ is
proved in view of \eqref{star4}.

Let us consider the solution $x_{\lambda }
\left(t,F\right)=\mathcal{R}_{\lambda } F$ \eqref{GEQ__57_} of
equation \eqref{GEQ__51_}. Let us prove that $x_{\lambda }
\left(t,F\right)$ satisfies the condition \eqref{GEQ__47++_}.
Let us denote $y\left(t,\lambda ,f\right)=R_{\lambda } f$. Then in
view of Green formula \eqref{GEQ__36_}

\begin{equation} \label{GEQ__814_}
\left. m\left[y,y\right]-\frac{\Im m\left[y,f\right]}{\Im \lambda
} =\frac{1}{2} \left(\Re Q\left(t,l_{\lambda }
\right)\vec{y}\left(t,l_{\lambda }
,m,f\right),\vec{y}\left(t,l_{\lambda }
,m,f\right)\right)\right|_a^b/\Im\lambda
\end{equation}

But the left hand side of \eqref{GEQ__814_} is $\le 0$ since
$R_{\lambda } f$ is generalized resolvent. So
\begin{equation} \label{GEQ__815_}
\forall f\in H:\, \, \left.\Re \left(Q\left(t,l_{\lambda }
\right)\vec{y}\left(t,l_{\lambda }
,m,f\right),\vec{y}\left(t,l_{\lambda }
,m,f\right)\right)\right|_a^b /\Im \lambda \le 0.
\end{equation}
But for every $\mathcal{H}^{r} $-valued $F\left(t\right)\in
L_{W\left(t,l_{\bar{\lambda }} ,m\right)}^{2}
\left(\bar{\mathcal{I}}\right)$ there exists such vector-function
$f\left(t\right)\in H$ that $x_{\lambda }
\left(a,F\right)=\vec{y}\left(a,l_{\lambda } ,m,f\right)$,
$x_{\lambda } \left(b,F\right)=\vec{y}\left(b,l_{\lambda }
m,f\right)$. So \eqref{GEQ__47++_} is proved in view of
\eqref{GEQ__815_}.

To prove that $M(\lambda)$ is  a characteristic operator of equation
\eqref{GEQ__51_} it remains to show that
$M(\bar\lambda)=M^*(\lambda)$.

Let us consider the following operator $\tilde{M}\left(\lambda
\right)\in B\left(N^{\bot } \right)$: $$\tilde{M}\left(\lambda
\right)=M\left(\lambda \right),\, \, \tilde{M}\left(\bar{\lambda
}\right)=M^{*} \left(\lambda \right),\, \, \Im \lambda >0$$

This operator is a characteristic operator of equation \eqref{GEQ__51_} in view
of \cite{Khrab5}. This characteristic operator generate by Theorem \ref{th4} the
operator $R\left(\lambda \right)$ \eqref{GEQ__63_}.

But $R\left(\lambda \right)=R_{\lambda } ,\, \Im \lambda
>0\Rightarrow R\left(\bar{\lambda }\right)=R^{*} \left(\lambda
\right)=R_{\lambda }^{*} =R_{\bar{\lambda }} ,\, \Im \lambda
>0\Rightarrow$ $\Rightarrow\, \forall f\in H$:
\begin{multline*}
\left\| \left[X_{\bar{\lambda }} \left(t\right)\right]_{\, 1}
\left(M^{*} \left(\lambda \right)-M\left(\bar{\lambda
}\right)\right)\int _{a}^{b}X_{\lambda }^{*}
\left(s\right)W\left(s,l_{\lambda } ,m\right)F\left(s,l_{\lambda }
,m\right)ds \right\|_m =0\\
\Rightarrow \forall h\in N^{\bot } :\, \, \Delta _{\bar{\lambda }}
\left(a,b\right)\left(M\left(\bar{\lambda }\right)-M^{*}
\left(\lambda \right)\right)h=0\Rightarrow M\left(\bar{\lambda
}\right)=M^{*} \left(\lambda \right).
\end{multline*}

Theorem \ref{th6} is proved.
\end{proof}

Let $\mathcal{I}_{k} ,k=1,2$ be finite intervals, $\mathcal{I}_{1}
\subset \mathcal{I}_{2} $. Then, in spite of the fact that
$f\left(t\right)\in C^{s} \left(\bar{\mathcal{I}}_{2}
,\mathcal{H}\right)$ but $\chi _{\mathcal{I}_{1} }
f\left(t\right)\notin C^{s} \left(\bar{\mathcal{I}}_{2}
,\mathcal{H}\right)$, where $\chi _{\mathcal{I}_{1} } $ is the
characteristic function of $\mathcal{I}_{1} $, one has.

\begin{corollary}\label{cor2}
Let $0\in \mathcal{I}_{1} $ and the condition \eqref{GEQ__55_}
with $\mathcal{I}=\mathcal{I}_{2} $ holds. Let $R_{\lambda } $ be
the generalized resolvent of the relation $\mathcal{L}_{0} $ in
$L_{m}^{2} \left(\mathcal{I}\right)$ with
$\mathcal{I}=\mathcal{I}_{2} $. Then by Theorems \ref{th4},
\ref{th6} there exists characteristic operator $M\left(\lambda \right)$ of equation
\eqref{GEQ__5_} such that $R_{\lambda } f=y_{1}
\left(t,\lambda ,f\right)$ \eqref{GEQ__62_}, $t\in
\mathcal{I}=\mathcal{I}_{2} $, $f\in
H\left(=H\left(\mathcal{I}_{2} \right)\right)$. Let us define the
operator $y_{1}^{1} \left(t,\lambda ,f\right)=R_{\lambda }^{1}
f,t\in \mathcal{I}=\mathcal{I}_{1} $, $f\in
H\left(=H\left(\mathcal{I}_{1} \right)\right)$ by the same formula
\eqref{GEQ__62_} as operator $R_{\lambda } f$, but with
$\mathcal{I}=\mathcal{I}_{1} $ instead of
$\mathcal{I}=\mathcal{I}_{2} $. Then this operator is (after
closing) the generalized resolvent of the relation
$\mathcal{L}_{0} $ in $L_{m}^{2} \left(\mathcal{I}\right)$ with
$\mathcal{I}=\mathcal{I}_{1} $.
\end{corollary}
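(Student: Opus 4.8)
The plan is to show that the very characteristic operator $M(\lambda)$ produced on $\mathcal{I}_2$ is again a characteristic operator of equation \eqref{GEQ__5_} on the smaller interval $\mathcal{I}_1$; once this restriction property is in hand, the assertion follows at once from Theorems \ref{th4} and \ref{th6} applied on $\mathcal{I}_1$. Write $\mathcal{I}_k=(a_k,b_k)$, so that $a_2\le a_1<b_1\le b_2$ and $0\in\mathcal{I}_1$. Since $0\in\mathcal{I}_1\subset\mathcal{I}_2$, the normalized solution $X_\lambda(t)$ with $X_\lambda(0)=I_r$ and the operator $G=\Re Q(0,l_\lambda)$ are common to both intervals, so the kernel $X_\lambda(t)\{M(\lambda)-\tfrac12\,\mathrm{sgn}(s-t)(iG)^{-1}\}X_{\bar\lambda}^*(s)$ entering \eqref{GEQ__62_} is literally the same object on $\mathcal{I}_1$ and on $\mathcal{I}_2$.

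First I would note that, in the defining formula \eqref{GEQ__47_} for the solution attached to $M(\lambda)$, the integrand carries the factor $W_\lambda(s)F(s)$ with $W_\lambda=\Im H(t,l_\lambda)/\Im\lambda\ge0$ the weight of \eqref{GEQ__51_}. Hence for any $\mathcal{H}^r$-valued $F\in L^2_{W_\lambda}(\mathcal{I}_1)$ with compact support $K\subset\mathcal{I}_1$ the integrand vanishes for $s\notin K$, so the integral over $\mathcal{I}_1$ and the integral over $\mathcal{I}_2$ coincide. Consequently the solution $x^{(1)}_\lambda(\cdot,F)$ built on $\mathcal{I}_1$ is exactly the restriction to $\mathcal{I}_1$ of the solution $x^{(2)}_\lambda(\cdot,F)$ built on $\mathcal{I}_2$, and on the two collars $(a_2,a_1)$ and $(b_1,b_2)$, where $F\equiv0$, it solves the homogeneous equation \eqref{GEQ__51_}.

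The heart of the argument is to push the boundary inequality \eqref{GEQ__47++_} from $\mathcal{I}_2$ down to $\mathcal{I}_1$. For any solution of \eqref{GEQ__46_} one has the boundary-form identity $\frac{d}{dt}U[x_\lambda(t)]=2\,\Im\lambda\,(W_\lambda(t)x_\lambda,x_\lambda)+2\,\Im(W_\lambda(t)F(t),x_\lambda)$, which follows from the equation itself together with $\Im H_\lambda=\Im\lambda\,W_\lambda$ and $W_\lambda\ge0$. On each collar $F\equiv0$, so $u(t):=U[x_\lambda(t)]$ has derivative $2\,\Im\lambda\,(W_\lambda x_\lambda,x_\lambda)$, whose sign is that of $\Im\lambda$; thus $u$ is nondecreasing on $(a_2,a_1)$ and on $(b_1,b_2)$ when $\Im\lambda>0$ and nonincreasing there when $\Im\lambda<0$. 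Take $\Im\lambda>0$ for definiteness. Since $u$ is continuous at the interior points $a_1,b_1$, monotonicity on the collars gives $u(a_2^{+})\le u(a_1)$ and $u(b_1)\le u(b_2^{-})$, while the hypothesis that $M(\lambda)$ is characteristic on $\mathcal{I}_2$ gives $u(b_2^{-})\le u(a_2^{+})$. Chaining, $u(b_1)\le u(b_2^{-})\le u(a_2^{+})\le u(a_1)$, i.e. $\Im\lambda\,(U[x_\lambda(b_1,F)]-U[x_\lambda(a_1,F)])\le0$; for $\Im\lambda<0$ every inequality reverses and the same conclusion holds. As the limit in \eqref{GEQ__47++_} over $(\alpha,\beta)\uparrow\mathcal{I}_1$ equals $u(b_1)-u(a_1)$ by continuity, $M(\lambda)$ satisfies \eqref{GEQ__47++_} on $\mathcal{I}_1$ and is therefore a characteristic operator of \eqref{GEQ__5_} there, the properties $M(\lambda)=M^*(\bar\lambda)$ and analyticity being intrinsic to $M(\lambda)$.

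It then remains only to assemble the pieces. Because $R^1_\lambda$ is given by \eqref{GEQ__62_} with this same $M(\lambda)$ but with integration over the finite interval $\mathcal{I}_1$, Theorem \ref{th4}---whose supplementary hypothesis \eqref{GEQ__55_} is needed only for infinite intervals---shows that, after closure, $R^1_\lambda$ possesses properties \eqref{GEQ__64_}--\eqref{GEQ__66_}. The forward implication of Theorem \ref{th6}, applied on $\mathcal{I}_1$ with $l_\lambda=l-\lambda m$ (for which no further instance of \eqref{GEQ__55_} is required), then identifies $R^1_\lambda$ with the generalized resolvent of $\mathcal{L}_0$ in $L^2_m(\mathcal{I}_1)$. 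I expect the monotonicity step of the third paragraph to be the main obstacle: it is there that the sign condition $W_\lambda\ge0$ and the vanishing of $F$ near the ends of $\mathcal{I}_2$ are genuinely used, and one must check that the one-sided limits at $a_1,b_1$ are legitimate (they are, since these are interior points of $\mathcal{I}_2$ where $x_\lambda$ is continuous), including the degenerate cases $a_1=a_2$ or $b_1=b_2$ in which a collar is empty and the corresponding inequality becomes an equality.
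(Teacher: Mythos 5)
Your proof is correct. Note that the paper states this corollary without giving any proof at all — it is presented as an immediate consequence of Theorems \ref{th4} and \ref{th6} — so the substantive content of your argument, namely that the characteristic operator $M(\lambda)$ produced on $\mathcal{I}_2$ is again a characteristic operator of equation \eqref{GEQ__5_} on the subinterval $\mathcal{I}_1$, is precisely the ingredient the paper leaves implicit. Your proof of that restriction property is sound: the identity $\frac{d}{dt}U[x_\lambda(t)]=2\,\Im\lambda\,(W_\lambda(t)x_\lambda,x_\lambda)+2\,\Im(W_\lambda(t)F(t),x_\lambda)$ does follow from equation \eqref{GEQ__46_} together with condition (\textbf{A}) (the terms involving $(\Re Q)'$ cancel, and $\Im H_\lambda=\Im\lambda\,W_\lambda$), so $U[x_\lambda(t)]$ is monotone in the appropriate direction on the collars where $F\equiv 0$, and chaining these inequalities with condition \eqref{GEQ__47++_} on $\mathcal{I}_2$ yields \eqref{GEQ__47++_} on $\mathcal{I}_1$. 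The identification of the solution \eqref{GEQ__47_} built over $\mathcal{I}_1$ with the restriction of the one built over $\mathcal{I}_2$ is legitimate because both intervals share the normalization point $0$, hence the same $X_\lambda(t)$ and $G$, and the integrand vanishes off the support of $F$. The assembly is also handled correctly: since $\mathcal{I}_1$ is finite, the supplementary hypothesis \eqref{GEQ__55_} is not required in Theorem \ref{th4}, and the forward direction of Theorem \ref{th6} then identifies the closure of $R^1_\lambda$ as a generalized resolvent of $\mathcal{L}_0$ in $L_m^2(\mathcal{I}_1)$; your treatment of the degenerate cases $a_1=a_2$ or $b_1=b_2$ is adequate since solutions extend continuously to $\bar{\mathcal{I}}_2$ in the regular case.
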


For generalized resolvents of differential operators a
representation of \eqref{GEQ__63_} type was obtained in
\cite{Shtraus1} for the scalar case and in \cite{Bruk1} for the
case of operator coefficients. For generalized resolvents for
\eqref{GEQ__1_}, \eqref{GEQ__2_} with $s=0$,
$n_\lambda[y]\equiv 0$ the representation of such a type was
obtained in \cite{Bruk2,Bruk3,Khrab3}.


Therefore characteristic operator of equation \eqref{GEQ__5_} is an analogue of characteristic matrix from \cite{Shtraus1}.

The resolvents of self-adjoint scalar differential operator in \cite[p. 528]{DS}, \cite[p. 280]{Naimark} are represented in another form. Let us transform \eqref{GEQ__63_} to the form which is analogous to \cite[p. 528]{DS}, \cite[p. 280]{Naimark}.

\begin{remark}\label{rm31} Let us represent characteristic operator $M\left(\lambda \right)$ from Theorem \ref{th4} in the form \eqref{13}. Then $R(\lambda)f$  \eqref{GEQ__63_} can be represented in the form
\begin{multline*}
R\left(\lambda \right)f=\int _{a}^{t}\sum _{j=1}^{r}y_{j} \left(t,\lambda \right) \sum _{k=0}^{{s\mathord{\left/ {\vphantom {s 2}} \right. \kern-\nulldelimiterspace} 2} }\left(x_{j}^{\left(k\right)} \left(s,\bar{\lambda }\right)\right)^{*}  m_{k} \left[f\left(s\right)\right]ds+\\+
\int _{t}^{b}\sum _{j=1}^{r}x_{j} \left(t,\lambda \right) \sum _{k=0}^{{s\mathord{\left/ {\vphantom {s 2}} \right. \kern-\nulldelimiterspace} 2} }\left(y_{j}^{\left(k\right)} \left(s,\bar{\lambda }\right)\right)^{*}  m_{k} \left[f\left(s\right)\right]ds
\end{multline*} 
where $x_{j} \left(t,\lambda \right),y_{j} \left(t,\lambda \right)\in B\left(\mathcal{H}\right)$ are operator solutions of equation \eqref{GEQ__1_} as $f=0$, such that $\left(x_{1} \left(t,\lambda \right),\, \ldots ,x_{r} \left(t,\lambda \right)\right)$ is the first row $\left[X_{\lambda } \left(t\right)\right]_{1} $ of operator matrix $X_{\lambda } \left(t\right),\, \left(y_{1} \left(t,\lambda \right),\, \ldots ,y_{r} \left(t,\lambda \right)\right)=\left[X_{\lambda } \left(t\right)\right]_{1} \mathcal{P}\left(\lambda \right)\left(iG\right)^{-1} $, \ $m_{k} \left[f\left(s\right)\right]=\tilde{p}_{k} \left(s\right)f^{\left(k\right)} \left(s\right)+\frac{i}{2} \left(\tilde{q}_{k}^{*} \left(s\right)f^{\left(k+1\right)} \left(s\right)-\tilde{q}_{k} \left(s\right)f^{\left(k-1\right)} \left(s\right)\right)$ $\left(\tilde{q}_{0} \equiv 0,\, \tilde{q}_{\frac{s}{2} +1} \equiv 0\right)$.
\end{remark}

\begin{proof} In view of Theorem \ref{th2} one has
\begin{multline*}
\forall h\in \mathcal{H}^{r} :\left(X_{\bar{\lambda }}^{*} \left(t\right)W_{\bar{\lambda }} \left(t\right)F_{\bar{\lambda }} \left(t\right),h\right)=m\left\{f\left(t\right),\left[X_{\bar\lambda } \left(t\right)\right]_{1} h\right\}=\\ =\left(\left({\left[X_{\lambda } \left(t\right)\right]_{1}}^{*} ,{\left[X_{\lambda } \left(t\right)\right] _{1}'}^* ,\ldots ,{\left[X_{\lambda } \left(t\right)\right]_{1}^{\left(n\right)}}^* \right)col\left\{m_{0} \left[f\left(t\right)\right],m_{1} \left[f\left(t\right)\right],\ldots ,m_{{s\mathord{\left/ {\vphantom {s 2}} \right. \kern-\nulldelimiterspace} 2} } \left[f\left(t\right)\right],0,\ldots ,0\right\},h\right),\\\left(n=\left[\frac{r}{2} \right]\right).
\end{multline*} 
Now Remark \ref{rm31} follows from \eqref{GEQ__62_}-\eqref{GEQ__63_} since $\left(\mathcal{P}\left(\lambda \right)-I_{r} \right)\left(iG\right)^{-1} =\left(\mathcal{P}\left(\bar{\lambda }\right)\left(iG\right)^{-1} \right)^{*} $ in view of \cite[p. 451]{Khrab5}.

\end{proof}

Remark \ref{rm31} shows that $\left(\mathcal{P}\left(\lambda \right)-I_{r} \right)\left(iG\right)^{-1} $ is an analogue of the matrix that is transponent to the matrix $\left\| \theta _{ij}^{-} \left(\lambda \right)\right\| $ from \cite[p. 528]{DS} and is an analogue of characteristic matrix from \cite[p. 280]{Naimark} ($\mathcal{P}\left(\lambda \right)\left(iG\right)^{-1} $ is an analogue of matrix that is transponent to the matrix  $\left\| \theta _{ij}^{+} \left(\lambda \right)\right\| $ from \cite[p. 528]{DS}).

If $r$ is even, condition \eqref{GEQ__47++_} is separated and $a=c$ then formula \eqref{GEQ__63_} can be transformed to the form which is analogues to \cite[p. 275-279]{Naimark}.

\begin{remark}\label{rm32} Let $r=2n,\ a=c$ and condition \eqref{GEQ__56_} hold with $P=I_r$. Let for characteristic operator $M\left(\lambda \right)$ of equation \eqref{GEQ__5_} condition \eqref{GEQ__47++_} be separated. (Therefore $M\left(\lambda \right)$ can be represented in the form \eqref{13} where characteristic projection $\mathcal{ P}\left(\lambda \right)$ has the representation \eqref{GEQ__64ad1_}, \eqref{GEQ__65ad1_}, and equation \eqref{GEQ__54_} corresponding to equation \eqref{GEQ__1_} $\left(f=0\right)$ has a solutions $U_{\lambda } \left(t\right),\, V_{\lambda } \left(t\right)$ \eqref{GEQ__66ad1_}-\eqref{GEQ__68ad1_}). Let domains ${D,}\, {D}_{1} $ are be the same as in Remark 2.1. Then for $\lambda \in {D}\bigcup {D}_{1} $ $R(\lambda)f$  \eqref{GEQ__63_} can be represented in the form
\begin{multline}
R\left(\lambda \right)f=\int _{a}^{t}\sum _{j=1}^{n}v_{j} \left(t,\lambda \right) \sum _{k=0}^{s/2}\left(u_{j}^{\left(k\right)} \left(s,\bar{\lambda }\right)\right)^{*}  m_{k} \left[f\left(s\right)\right]ds +\\ \label{GEQ__114_} 
+\int _{t}^{b}\sum _{j=1}^{n}u_{j} \left(t,\lambda \right) \sum _{k=0}^{s/2}\left(v_{j}^{\left(k\right)} \left(s,\bar{\lambda }\right)\right)^{*}  m_{k} \left[f\left(s\right)\right]ds ,  
\end{multline} 
where $u_{j} \left(t,\lambda \right),\, v_{j} \left(t,\lambda \right)\in B\left(\mathcal{H}\right)$ are operator solutions of equation \eqref{GEQ__1_} as $f=0$, such that, $\left(u_{1} \left(t,\lambda \right),\ldots u_{n} \left(t,\lambda \right)\right)=\left[X_{\lambda } \left(t\right)\right]_{1} \left(\begin{array}{c} {a\left(\lambda \right)} \\ {b\left(\lambda \right)} \end{array}\right)$,
\begin{equation} \label{GEQ__115_} 
\left(v_{1} \left(t,\lambda \right),\ldots ,v_{n} \left(t,\lambda \right)\right)=\left[X_{\lambda } \left(t\right)\right]_{1} \left(\begin{array}{c} {b\left(\lambda \right)} \\ {-a\left(\lambda \right)} \end{array}\right)K^{-1} \left(\lambda \right)+\left(u_{1} \left(t,\lambda \right),\ldots ,u_{n} \left(t,\lambda \right)\right)m_{a,b} \left(\lambda \right),  
\end{equation} 
$K\left(\lambda \right),\, m_{a,b} \left(\lambda \right)$ see \eqref{GEQ__67ad1_}, \eqref{GEQ__68ad1_}; $\left(v_{1} \left(t,\lambda \right),\ldots ,v_{n} \left(t,\lambda \right)\right)h\in L_{m}^{2} \left(c,b\right)\, \, \forall h\in \mathcal{H}^{n} $.

Moreover if $\exists\lambda_0\in\mathbb{C}\setminus\mathbb{R}^1$ such that $a\left(\lambda_0 \right)=a\left(\bar{\lambda }_0\right),\, b\left(\lambda_0 \right)=b\left(\bar{\lambda }_0\right)$ then we can set $D=\mathbb{C}_+$ and
$$\left\| \left(v_{1} \left(t,\lambda \right),\ldots ,v_{n} \left(t,\lambda \right)\right)h\right\| _{m}^{2} \le \frac{\Im\left(m_{a,b} \left(\lambda \right)h,h\right)}{\Im\lambda } \, \, \left(\Im\lambda \ne 0\right).$$ 

\end{remark}

\begin{proof}
Proof of Remark \ref{rm32} follows from Remark \ref{rm21} and Theorem \ref{th2}.
\end{proof}

Remark \ref{rm32} shows that operator-function $m_{a,b} \left(\lambda \right)$ from \eqref{GEQ__114_}, \eqref{GEQ__115_} is an analogue of characteristic matrix from \cite[p. 278]{Naimark} since for any self-adjoint operator initial condition (in particular for initial condition of \cite[p. 277]{Naimark} type) the resolvent \eqref{GEQ__114_} exists such that solution-row $\left(u_{1} \left(t,\lambda \right),\ldots ,u_{n} \left(t,\lambda \right)\right)$ satisfies this condition. For example if $a\left(\lambda \right)=I_{n} ,\, b\left(\lambda \right)=b=b^{*} $ then $m_{I_{n} ,b} \left(\lambda \right)$ is equal to characteristic matrix of \cite[p. 276]{Naimark} type minus $b(I_n+b^2)^{-1}$.

Let us note that the connection between generalized resolvents of minimal operator corresponding to self-adjoint extension in Krein space and boundary value problem with boundary conditions depending on spectral parameter  locally holomorphic in some set $\subset \mathbb{C}\setminus\mathbb{R}^1$ was studied in \cite{DSnoo2} for the scalar symmetric Sturm-Liouville operator on the semi-axis in limit point case .

\section*{Acknowledgments} The author is grateful to professor
F.S. Rofe-Beketov for his great attention to this work.

\end{document}